\documentclass[12pt]{amsart}%
\usepackage{amssymb,amsfonts}
\usepackage{amscd}
\usepackage{amsmath}
\usepackage{amsfonts}
\usepackage{amssymb}
\usepackage{verbatim}
\usepackage{mathrsfs}
\usepackage{bm}
\usepackage{comment}
\usepackage{tikz-cd}
\usepackage{xcolor}
\usepackage{hyperref}

\hypersetup{
    colorlinks=true,
    citecolor=blue, 
     linkcolor=black,     
    urlcolor=magenta, 
}

\providecommand{\U}[1]{\protect\rule{.1in}{.1in}}
\newtheorem{theorem}{Theorem}[section]
\newtheorem{lemma}[theorem]{Lemma}
\newtheorem{proposition}[theorem]{Proposition}

\newtheorem{example}[theorem]{Example}

\newtheorem{corollary}[theorem]{Corollary}
\newtheorem{remark}[theorem]{Remark}

\numberwithin{equation}{section}

\newcommand{\cB}{\mathcal{B}}

\newcommand{\cE}{\mathcal{E}}

\newcommand{\cF}{\mathcal{F}}

\newcommand{\cH}{\mathcal{H}}

\newcommand{\cK}{\mathcal{K}}

\newcommand{\cV}{\mathcal{V}}

\newcommand{\bC}{\mathbb{C}}

\newcommand{\bB}{\mathbb{B}}

\newcommand{\bM}{\mathbb{M}}

\newcommand{\bN}{\mathbb{N}}

\newcommand{\bW}{\mathbb{W}}

\newcommand{\fA}{{\mathfrak{A}}}
\newcommand{\fB}{{\mathfrak{B}}}
\newcommand{\fC}{{\mathfrak{C}}}

\newcommand{\bea}{\begin{eqnarray}}
\newcommand{\eea}{\end{eqnarray}}

\newcommand{\cla}{\mathcal{A}}
\newcommand{\clb}{\mathcal{B}}

\newcommand{\cld}{\mathcal{D}}
\newcommand{\cle}{\mathcal{E}}
\newcommand{\clf}{\mathcal{F}}

\newcommand{\clh}{\mathcal{H}}
\newcommand{\clk}{\mathcal{K}}

\newcommand{\clq}{\mathcal{Q}}

\newcommand{\cls}{\mathcal{S}}

\newcommand{\D}{\mathbb{D}}
\newcommand{\N}{\mathbb{N}}
\newcommand{\C}{\mathbb{C}}

\newcommand{\z}{\bm{z}}
\newcommand{\w}{\bm{w}}

\def\textmatrix#1&#2\\#3&#4\\{\bigl({#1 \atop #3}\ {#2 \atop #4}\bigr)}
\def\dispmatrix#1&#2\\#3&#4\\{\left({#1 \atop #3}\ {#2 \atop #4}\right)}
\newcommand{\be}{\begin{equation}}
\newcommand{\ee}{\end{equation}}
\newcommand{\ben}{\begin{eqnarray*}}
\newcommand{\een}{\end{eqnarray*}}

\newcommand{\bi}{\begin{itemize}}
\newcommand{\ei}{\end{itemize}}

\newcommand\la{{\langle }}
\newcommand\ra{{\rangle}}

\newtheorem{Theorem}{\sc Theorem}[section]
\newtheorem{Lemma}[Theorem]{\sc Lemma}
\newtheorem{Proposition}[Theorem]{\sc Proposition}
\newtheorem{Corollary}[Theorem]{\sc Corollary}
\newtheorem{Definition}[Theorem]{\sc Definition}
\newtheorem{Example}[Theorem]{\sc Example}

\newtheorem{Remark}[Theorem]{\sc Remark}

\newtheorem{Note}[Theorem]{\sc Note}
\newtheorem{Question}{\sc Question}
\newtheorem{ass}[Theorem]{\sc Assumption}
\newcommand{\bt}{\begin{Theorem}}
\def\beginlem{\begin{Lemma}}
\def\beginprop{\begin{Proposition}}
\def\begincor{\begin{Corollary}}
\def\begindef{\begin{Definition}}
\def\beginexamp{\begin{Example}}
\def\beginrem{\begin{Remark}}
\def\beginq{\begin{Question}}
\def\beginass{\begin{ass}}
\def\beginnote{\begin{Note}}
\newcommand{\et}{\end{Theorem}}
\def\endlem{\end{Lemma}}
\def\endprop{\end{Proposition}}
\def\endcor{\end{Corollary}}
\def\enddef{\end{Definition}}
\def\endexamp{\end{Example}}
\def\endrem{\end{Remark}}
\def\endq{\end{Question}}
\def\endass{\end{ass}}

\makeatother

\begin{document}

\title[nc commutant lifting, interpolation, and shift-invariant subspaces]{noncommutative commutant lifting, interpolation, and shift-invariant subspaces}

\author[Barik]{Sibaprasad Barik}
\address{Statistics and Mathematics Unit, Indian Statistical Institute, 8th Mile, Mysore Road, Bengaluru, 560059, India}

\email{barik\_if@isibang.ac.in, sibaprasadbarik00@gmail.com}

	\author[Panja]{Samir Panja}
\address{Statistics and Mathematics Unit, Indian Statistical Institute, 8th Mile, Mysore Road, Bengaluru, 560059, India} 
\email{samirpanja\_pd@isibang.ac.in, panjasamir2020@gmail.com}

\subjclass[2020]{46L52, 47A57, 47A13, 47A15, 47A20, 47B32, 47B38}
	\keywords{Noncommutative commutant lifting, Nevanlinna-Pick interpolation, noncommutative Drury-Arveson space, invariant subspaces}
	\begin{abstract}
We establish a Sarason-type commutant lifting theorem in the framework of noncommutative Drury-Arveson space and apply it to solve the noncommutative Nevanlinna-Pick interpolation problem on the noncommutative row ball. In particular, it recovers the classical Nevanlinna-Pick interpolation theorem over the open unit disc.  We also provide a Beurling-Lax-Halmos-type characterization of shift-invariant subspaces of noncommutative Drury-Arveson space using dilation theoretic technique.
	\end{abstract}
    
\maketitle 
\section*{Notation}
\begin{list}{\quad}{}
\item $\mathbb{D}$ \quad \quad \quad\,\,\, \, \,  \ \ Open unit disc in the complex plane
$\mathbb{C}$.

\item  $\mathbb{D}^d$ \; \quad \quad\,\,\, \, \,  \  \ Open unit polydisc in the $d$-dimensional complex plane $\mathbb{C}^d$.

\item $\bB^d$\; \quad\quad\,\ \,\, \, \, \ \ Open unit ball in  $\C^d$.

\item  $\bW_d$\; \, \, \, \, \, \, \, \, \ \ Free monoid on $d$ generators.

\item $M_n(\C)$\; \, \, \, \, \, \,   Set of all $n\times n$-matrices with complex entries.

\item  $\clh$, $\cle$, $\clf$ \; \, \, \, \ \ Hilbert spaces.

\item $\clb(\clh)$ \quad \;\,\,\, \, \, \, \ The space of all bounded linear
operators on $\clh$.

\item  $K$, $\clk$, $\Tilde{K}$, $\Tilde{\clk}$  \; \,  Positive semi-definite kernels.

\item $\fB_d$ \; \; \, \, \, \, \, \, \, \  Open unit noncommutative row ball.

\item $\clh^2_d$\; \; \, \, \, \, \, \, \,  \ \ Noncommutative Drury-Arveson space.



\end{list}

The last two notations are defined explicitly in the paper. Throughout this article, we shall use the abbreviation `\textit{nc}' for `\textit{noncommutative}', and all Hilbert spaces are taken over the complex numbers.

\section{Introduction}
One of the main goals of this article is to explore a connection between Nevanlinna-Pick interpolation and commutant lifting in the noncommutative setup. The classical Nevanlinna-Pick interpolation problem on the open unit disc $\D$ asks the following: given $n$ distinct points $z_1,\ldots,z_n$ in $\D$ and $n$ values $w_1,\ldots,w_n$ in $\D$, whether there exists a bounded holomorphic function $\varphi$ on $\D$ such that, $\|\varphi\|_{\infty}\leq 1$ and $\varphi$ interpolates the data, that is,
\[
\varphi(z_i)=w_i\quad(i=1,\ldots,n).
\]
By the Nevanlinna-Pick theorem (\cite{N, Pick}), the above interpolation problem is solvable if and only if the Pick matrix
\[
\begin{bmatrix}
	\frac{1-w_i\bar w_j}{1-z_i\bar z_j}
\end{bmatrix}_{i,j=1}^n
\]
is positive semi-definite. Here the algebra of bounded holomorphic functions on $\D$ is denoted by $H^{\infty}(\D)$, and a function $\varphi\in H^{\infty}(\D)$ with $\|\varphi\|_{\infty}\leq 1$ is called a Schur class function on $\D$. The algebra $H^{\infty}(\D)$ is the multiplier algebra of the Hardy space $H^2(\D)$, which is a reproducing kernel Hilbert space (\cite{Aron}) corresponding to the kernel $K$ on $\D$, known as the Szeg\H o kernel on $\D$, defined by 
\[
K(z,w)=\frac{1}{1-z\bar w}\quad(z,w\in \D).
\]
In view of this connection, Sarason (\cite{S}) proved a commutant lifting theorem in the setting of $H^2(\D)$, which provides an alternative operator-theoretic approach for solving the classical Nevanlinna-Pick interpolation problem. Recall that Sarason's commutant lifting theorem says the following: if $\clq\subseteq H^2(\D)$ is shift co-invariant and $X\in\clb(\clq)$ with $\|X\|\leq 1$ satisfies $(P_{\clq}M_z|_{\clq})X=X(P_{\clq}M_z|_{\clq})$, where $M_z$ is the shift operator on $H^2(\D)$ (multiplication by the co-ordinate function), then
\[
X^*=M_{\varphi}^*|_{\clq},
\]
for some Schur class function $\varphi\in H^{\infty}(\D)$.

The classical Nevanlinna–Pick interpolation theorem has been extensively generalized
to several multivariable domains in the commutative setting, including the open unit polydisc $\mathbb{D}^{d}$ and the open unit ball $\mathbb{B}^{d}$. A natural framework for studying the interpolation problem on $\D^d$ is provided by the Hardy space $H^2(\D^d)$, which is a reproducing kernel Hilbert space corresponding to the kernel 
\[
K(\z,\w)=\prod_{i=1}^d\frac{1}{1-z_i\bar w_i}\quad(\z=(z_1,\ldots,z_d),\w=(w_1,\ldots,w_d)\in \D^d).
\]
For further details on commutant lifting and interpolation on $\D^d$, we refer \cite{BLTT, BBD, BDS, DasPanja, DS} and the references therein. 
On the other hand, a natural framework for studying the interpolation problem on $\bB^d$ is provided by the Drury-Arveson space, which is a reproducing kernel Hilbert space corresponding to the kernel
\[
K(\z,\w)=\frac{1}{1-\langle\z,\w\rangle}\quad(\z=(z_1,\ldots,z_d),\w=(w_1,\ldots,w_d)\in \bB^d).
\]
For unit ball $\bB^d$, there are several articles (see \cite{AT, BTV, DL, EP} and the references therein), where the authors proved commutant lifting theorems and studied the corresponding interpolation problems.

The study of noncommutative (nc) functions has emerged as a vibrant and rapidly developing area of research, whose roots go back to the foundational works
of Taylor (\cite{T72,T73}) and Voiculescu (\cite{V85,V86,V04,V10}). Theories of noncommutative free analysis have been developed through a series of works, including \cite{AMC15,AME15,AM16,KV, P06}. Beyond its intrinsic theoretical significance, the subject has applications in several areas such as free probability (\cite{PV}) and real and complex algebraic geometry (\cite{H,HM}). Let $$\bM^{d}:=\bigsqcup_{n=1}^{\infty}M_n^d(\C)$$ denote the disjoint union of $d$-tuple of $n\times n$ matrices $M_n^d(\C)$ over all finite orders $n$. A subset $\Omega$ of $\bM^d$ is called an nc set if it is closed under direct sums, and a $d$-variable function on $\Omega$ is said to be an nc function if it is graded and respects direct sums and similarities (see Subsection \ref{Sub: nc function} for a detailed discussion). By equipping an nc set $\Omega$ with a suitable topology, referred to as the free topology, we denote the algebra of all bounded holomorphic functions by $H^{\infty}(\Omega)$ (see Section \ref{Sec: pre}), and we let $H_1^{\infty}(\Omega)$ denote the closed unit ball of $H^{\infty}(\Omega)$. 

The noncommutative Nevanlinna-Pick interpolation problem on a nc domain $\Omega$ asks the following: \textit{for $1\leq i\leq m$, given $Z_i\in \Omega\cap M^d_{n_i}(\C)$ and $W_i\in M_{n_i}(\C)$, whether there exists an nc function $\Phi\in H_1^{\infty}(\Omega)$ such that 
\[
\Phi(Z_i)=W_i\quad (i=1,\ldots,m).
\]}
Since the above interpolation problem on general nc domains are typically difficult to solve, we focus on a particular class of nc domains, namely, the basic free open sets. These domains are defined by matrices $\delta$ whose entries are free polynomials and usually denoted by $G_{\delta}$ (some authors also use the notation $\D_{Q}$, where $Q$ is the matrix of free polynomials). We refer the reader to Subsection \ref{Sub: nc function} for a detailed discussion of these domains. In \cite[Theorem 1.3]{AM15}, the authors established a necessary and sufficient condition for the solvability of the nc Nevanlinna-Pick interpolation problem on the domains $G_{\delta}$. Also, in \cite{BMV18}, the authors considered a more general nc interpolation problem on $G_\delta$, namely, the left-tangential interpolation problem and obtained a necessary and sufficient condition for its solvability (see \cite[Theorem 3.1]{BMV18}). 

In this article, we study nc Nevanlinna-Pick interpolation problem on a particular $G_\delta$ domain in $\bM^d$, namely the open unit nc row ball, denoted by $\fB_d$, defined by
\[
\fB_d:=\Big\{X=(X_1,\ldots,X_d)\in\bM^{d}:\|X\|^2=\Big\|\sum_{j=1}^dX_jX_j^*\Big\| <1\Big\}.
\] 
To solve the nc Nevanlinna-Pick interpolation problem on $\fB_d$, we first establish a Sarason-type commutant lifting theorem in the nc Drury-Arveson space $\clh_d^2$, which is an nc reproducing kernel Hilbert space corresponding to the nc kernel 
\[
   \clk:\fB_d\times\fB_d\to\bigsqcup_{m,n=1}^{\infty} B\big(M_{m\times n}(\bC), M_{m\times n}(\bC)\big) 
    \]
    defined by
    \[
   \clk(Z,W)(T)=\sum_{\alpha\in \bW_d}Z^{\alpha}T(W^{\alpha})^*,
    \]
     for $Z\in\fB_d\cap M_m^d(\C)$, $W\in \fB_d\cap M_n^d(\C)$, and $T\in M_{m\times n}(\bC)$, where $\bW_d$ is the free monoid on $d$ generators, denoted here by the first $d$ natural numbers $\{1,...,d\}$ (see Subsection \ref{Sub: nc Drury} for detailed explanation of the notations). Using the fact that $H^{\infty}(\fB_d)$ is the multiplier algebra of $\clh_d^2$ (as established in \cite{BMV18, SSS}), we obtain, as an application of our commutant lifting theorem,  a necessary and sufficient condition for solvability of the corresponding interpolation problem on $\fB_d$ (see Theorem \ref{Scaler nc pick theorem} below). Although the condition is similar to that in \cite{BMV18} (also see \cite{AJP}), our approach is entirely different from that of \cite{BMV18}, and we follow the classical approach developed by Sarason in \cite{S}. Our approach is more direct and explicit: the analysis in Sections \ref{commutant} and \ref{lifting and interpolation} is carried out entirely within the framework of the nc Drury-Arveson space and it provides a more intrinsic and transparent treatment of the problem. Moreover, our proof of the interpolation theorem via commutant lifting explicitly demonstrates the naturality of the necessary and sufficient condition (see Theorem \ref{commutant_lifting theorem} below). We also show how the nc Nevanlinna-Pick interpolation theorem recovers the classical Nevanlinna–Pick interpolation theorem on $\mathbb{D}$ (see Subsection \ref{subsection_recovery NP theorem} below).

     Another goal of this article is to study the joint shift invariant subspaces of the nc Drury-Arveson space. For a bounded operator $T$ on a Hilbert space $\clh$, the invariant subspace problem asks whether there always exists a non-trivial closed subspace $\cls\subseteq \clh$ such that $T\cls\subseteq \cls$. For infinite-dimensional separable Hilbert spaces, this problem is still open. By Beurling’s theorem (\cite{Beur}), the nontrivial closed $M_z$-invariant subspaces of the Hardy space $H^2(\D)$ are precisely those of the form $\theta H^2(\D)$, where $\theta\in H^\infty(\D)$ is an inner function. Subsequently, Lax (\cite{Lax}) and Halmos (\cite{Halmos}) generalized Beurling's theorem to the vector-valued Hardy space $H_{\cle}^2(\D)$ ($\cong H^2(\D)\otimes \cle$).

They showed that a non-trivial closed subspace $\cls\subseteq H_{\cle}^2(\D)$ is $M_{z}\otimes I_{\cle}$-invariant if and only if there exist an auxiliary Hilbert space $\clf$ and an inner function $\theta\in H_{\clb(\clf,\cle)}^{\infty}(\D)$ such that
\[
\cls=\theta H_{\clf}^2(\D),
\]
where $H_{\clb(\clf,\cle)}^{\infty}(\D)$ is the algebra of $\clb(\clf,\cle)$-valued bounded analytic functions on $\D$.
 
In the setting of the commutative Drury-Arveson space $H_d^2$ over the unit ball $\bB^n$, a non-trivial closed subspace $\cls \subseteq H_d^2$ is invariant under the row shift operator $(M_{z_1},\ldots, M_{z_d})$ if and only if there exist a Hilbert space $\clf$ and a partial isometric multiplier $\Psi\in \text{Mult}(H_d^2\otimes I_{\clf}, H_d^2)$ such that
\[
\cls=\Psi (H_d^2\otimes I_{\clf})
\]
(cf. \cite{GRS, MT, Sarkar-II}). A similar result also holds for general vector-valued analytic reproducing kernel Hilbert spaces $H(K)\otimes \cle$, where $K$ is an analytic kernel on $\D$ or $\bB^d$ (see \cite{Sarkar-I,Sarkar-II, MT}). Results in \cite{Sarkar-I} and \cite{Sarkar-II} were proved using properties of dilation theory and multiplier spaces of reproducing kernel Hilbert spaces. 

In the noncommutative setting, characterization of invariant subspaces  has been investigated by several authors from various perspectives (see \cite{BV22, MS, Po89, P10, P18}).
 We characterize the shift-invariant subspaces of the nc Drury-Arveson space (see Theorem \ref{inv-sub-left-shift}) adapting the techniques from \cite{Sarkar-I} and \cite{Sarkar-II}. Our proof is entirely new in this setting, relies on the dilation result established in Theorem \ref{dilation} below, together with the explicit dilation map. 

The rest of the paper is organized as follows. In the next section, we review some general theory of nc functions, nc kernels, and nc reproducing kernel Hilbert spaces. Section \ref{vector-valued-Drury} is devoted to the discussion  nc Drury-Arveson spaces, both scalar and vector-valued cases, together with a detailed discussion of their multiplier algebras. In this section, we also consider both the left and right multipliers as they are linked through some commutation relations, developed and used in later sections. In Section \ref{commutant}, we characterize the commutants of the shift operators on the nc Drury-Arveson space in a general vector-valued setup. In Section \ref{lifting and interpolation}, we prove a commutant lifting theorem for the nc Drury-Arveson space and use it to obtain a necessary and sufficient condition for the solvability of the corresponding Nevanlinna-Pick interpolation problem. The vector-valued version and a few examples are also discussed in this section. Finally, Section \ref{invariant subspace} is devoted to the characterization of the shift-invariant subspaces of the nc Drury-Arveson space.

\section{Preliminaries}\label{Sec: pre}
In this section, we recall some preliminary notions and results that will be used subsequently. We first introduce the notation. Throughout the paper, $d\in\N$ is fixed.
For a vector space $\cV$ and $n\in\bN$, we denote $M_n(\cV)$ by the set of all $n\times n$ matrices over $\cV$. Let
\[
M_n^d(\cV):=\{X=(X_1,\ldots,X_d):X_i\in M_n(\cV),\,\forall\, i=1,\ldots,d\}
\]
be the set of all $d$-tuples of $n\times n$ matrices over $\cV$.
Define  
\[
\bM^d(\cV):=\bigsqcup_{n=1}^{\infty} M_n^d(\cV)
\]
which is the disjoint union over $n\in\bN$, of all $d$-tuples of $n\times n$ matrices over $\cV$. \textit{When $\cV=\bC$, we simply write $\bM^d$ instead of $\bM^d(\bC)$}. For a set $\Omega\subseteq\bM^d$, we denote $\Omega(n)=\Omega\cap M_n^d(\C)$, which is the slice of $\Omega$ at the $n$-th level. 

\subsection{Noncommutative sets and noncommutative functions}\label{Sub: nc function} A set $\Omega\subseteq\bM^d$ is said to be an \textit{nc set} if it is closed under direct sums, that is, for $X\in\Omega(n)$ and $Y\in\Omega(m)$
\[
X\oplus Y=\begin{bmatrix}
    X & 0\\
    0 & Y
\end{bmatrix}\in\Omega(n+m).
\]
Let $\cV$ be a vector space and $\Omega\subseteq \bM^d$ be an nc set. A function $f:\Omega\to\bM^1(\cV)$ is said to be an \textit{nc function} (with values in $\cV$) if 
\begin{itemize}
    \item[(i)] $f$ is graded: $X\in\Omega(n)$ implies $f(X)\in M_n(\cV)$.
    \item[(ii)] $f$ respects direct sums: $f(X\oplus Y)=f(X)\oplus f(Y)$.
    \item[(iii)] $f$ respects similarities: if $X\in\Omega(n)$ and $S\in M_n(\C)$ is invertible and if $S^{-1}XS\in\Omega(n)$ then $f(S^{-1}XS)=S^{-1}f(X)S$.
\end{itemize}
It turns out that the last two conditions are equivalent to a single one: \textit{$f$ respects intertwinings: $XS= SY$ implies $f(X)S= Sf(Y)$ for $X\in\Omega(n)$, $Y\in\Omega(m)$ and $S\in M_{n\times m}(\C)$}. This condition originates in the pioneering work of Taylor (\cite{T73}). One canonical example of nc function is that any polynomial $p$ over $\bC$ in $d$-noncommutative variables is an nc function on the nc set $\bM^d$.

There is a natural norm topology on $\bM^d$, and with respect to that topology we define the following open set, namely, the open unit nc row ball
\[
\fB_d:=\Big\{X=(X_1,\ldots,X_d)\in\bM^d:\|X\|^2=\Big\|\sum_{j=1}^dX_jX_j^*\Big\| <1\Big\}.
\] 
The topology above is the disjoint union topology, and $\fB_d$ is open in this topology as $\fB_d\cap M_n^d(\C)$ is open in $M_n^d(\C)$ for every $n$. Another topology one can give on $\bM^d$ is the free topology, which is defined as follows: let $\delta=\big(p_{ij}\big)_{l\times m}$ be a matrix of free polynomials in $d$-variables. We define 
\[
G_{\delta}=\{X\in\bM^d:\|\delta(X)\|<1\}.
\]
The free topology on $\bM^d$ is the topology that has a basis consisting of all such $G_\delta$, where $\delta$ is a matrix of free polynomials. One can verify that $G_{\delta_1}\cap G_{\delta_2}=G_{\delta_1\oplus\delta_2}$, which ensures that set of all $G_{\delta}$ indeed forms a basis for a topology. The free topology is commonly used in the noncommutative function theory.  

An nc set $\Omega\subseteq \bM^d$ is said to be an \textit{nc domain} if it is open in the free topology and if every $\Omega(n)$ is connected. The basic open sets $G_{\delta}$ are some examples of nc domain.

In the following we write down $G_{\delta}$ in some particular cases:
\begin{example}
\begin{enumerate}
    \item If $\delta(x)=[x_1\cdots x_d]$ where $x_1,\ldots,x_d$ are noncommutative variables and $x=(x_1,\ldots,x_d)$, then 
    \begin{align*}
        G_{\delta}&=\big\{X=(X_1,\ldots,X_d)\in\bM^d:\|[X_1\cdots X_d]\| <1\big\}\\
        &=\Big\{X=(X_1,\ldots,X_d)\in\bM^d:\Big\|\sum_{j=1}^dX_jX_j^*\Big\| <1\Big\},
    \end{align*}
which is nothing but the open unit nc row ball $\fB_d$.

\item If $\delta(x)=\text{diag}\{x_1,\ldots,x_d\}$, then 
\begin{align*}
    G_{\delta}&=\big\{X=(X_1,\ldots,X_d)\in\bM^d:\|\text{diag}\{X_1,\ldots,X_d\}\| <1\big\}\\
        &=\big\{X=(X_1,\ldots,X_d)\in\bM^d:\|X_i\|^2=\|X_iX_i^*\|<1,\,\forall\,i=1,\ldots,d \big\},
\end{align*}
which is an open unit nc polydisc.
\end{enumerate}
\end{example}

A \textit{free holomorphic function} $\varphi$ on an open set $\Omega\subseteq \bM^d$ is an nc function that is locally bounded in the free topology, i.e., for every $X\in\Omega$  there is a basic free open set $G_{\delta}\subseteq\Omega$ such that $X\in G_{\delta}$ and $\varphi|_{G_{\delta}}$ is a bounded nc function. We denote the set of bounded free holomorphic functionon an open set $\Omega\subseteq\bM^d$ by $H^{\infty}(\Omega)$ and the closed unit ball of $H^{\infty}(\Omega)$ is denoted by $H^{\infty}_1(\Omega)$, that is,
\[
H^{\infty}_1(\Omega):=\big\{\Phi\in H^{\infty}(\Omega): \sup_{Z\in \Omega}\|\Phi(Z)\|\leq 1 \big\}.
\]

\subsection{Noncommutative kernels and noncommutative reproducing kernel Hilbert spaces}\label{Sub: nc RKHS} Let $\Omega\subseteq\bM^d$ be an nc set, and $\fA$ and $\fC$ be $C^*$-algebras. A completely positive nc (abbreviated as \textit{cp}-nc) kernel (with values in $\cB(\fA,\fC)$) on $\Omega$ is a function
\[
K:\Omega\times\Omega\to\bigsqcup_{m,n=1}^{\infty} \cB\big(M_{m\times n}(\fA), M_{m\times n}(\fC)\big)
\]
such that 
\begin{enumerate}
    \item $K$ is graded: $Z\in\Omega(m)$ and $W\in\Omega(n)$ implies $K(Z,W)\in \cB\big(M_{m\times n}(\fA),M_{m\times n}(\fC)\big)$.

    \item $K$ respects direct sums: For $Z\in\Omega(n)$, $Z'\in\Omega(n')$, $W\in\Omega(m)$ and $W'\in\Omega(m')$ such that $\begin{bmatrix}
        Z & 0\\
        0 & Z'
    \end{bmatrix}\in\Omega(n+n')$ and $\begin{bmatrix}
        W & 0\\
        0 & W'
    \end{bmatrix}\in\Omega(m+m')$ and $P=\begin{bmatrix}
        P_{11} & P_{12}\\
        P_{21} & P_{22}
    \end{bmatrix}\\\in M_{(n+m)\times (n'+m')}(\fA)$, we have
    \[
    K\bigg(\begin{bmatrix}
        Z & 0\\
        0 & Z'
    \end{bmatrix},\begin{bmatrix}
        W & 0\\
        0 & W'
    \end{bmatrix}\bigg)\bigg(\begin{bmatrix}
        P_{11} & P_{12}\\
        P_{21} & P_{22}
    \end{bmatrix}\bigg)=\begin{bmatrix}
        K(Z,W)(P_{11}) & K(Z,W')(P_{12})\\
        K(Z',W)(P_{21}) & K(Z',W')(P_{22})
    \end{bmatrix}.
    \]
    \item $K$ respects similarities: For $Z,Z'\in\Omega(n)$, $W,W'\in\Omega(m)$, $A\in M_n(\C)$ invertible and $B\in M_m(\C)$ invertible if $Z'=AZA^{-1}$, and $W'=BWB^{-1}$ then for all $P\in M_{n\times m}(\fA)$
    \[
    K(Z',W')(P)=AK(Z,W)(A^{-1}PB^{-1*})B^*.
    \]
    \item $K(Z,Z)$ is a \textit{cp} map for all $Z\in\Omega$. 
\end{enumerate} 

In the definition of \textit{cp}-nc kernel, if we take $\fC=\clb(\cE)$, for some Hilbert space $\cE$ then we know the following result, which is a part of \cite[Theorem 3.1]{BMV16}.
\begin{theorem}\label{rkhs}
    Suppose that $\Omega\subseteq \bM^d$ is an nc set, $\cE$ is a Hilbert space, $\fA$ is a $C^*$-algebra and 
    \[
    K:\Omega\times\Omega\to\bigsqcup_{m,n=1}^{\infty} \cB\big(M_{m\times n}(\fA), M_{m\times n}(\cB(\cE))\big) 
    \]
    is a \textit{cp}-nc kernel. Then there is a Hilbert space $H(K)$ whose elements are nc functions 
    \[
    f:\Omega\to\bigsqcup_{n=1}^{\infty} M_n\big(\cB(\fA, \cE)\big)\cong\bigsqcup_{n=1}^{\infty} \cB\big(\fA^n, \cE^n\big)
    \]
    such that
    \begin{enumerate}
        \item For each $W\in\Omega(m)$, $v\in\fA^{m}$ and $y\in\cE^m$, the function
        \[
        K_{W,v,y}:\Omega(n)\to \bigsqcup_{n=1}^{\infty} \cB\big(\fA^n, \cE^n\big),
        \]
     called the kernel function, defined by
        \[
        K_{W,v,y}(Z)u=K(Z,W)(uv^*)y\quad (Z\in\Omega(n),\, u\in\fA^n),
        \]
        belongs to $H(K)$.

        \vspace{0.1cm}

        \item $H(K)=\overline{\text{span}}\{K_{W,v,y}: W\in\Omega(n), y\in\cE^n, v\in\fA^n, n\in \bN\}$.

\vspace{0.1cm}

        \item The kernel function $K_{W,v,y}$, as defined above, have the reproducing property: for $f\in H(K)$, $W\in\Omega(m)$ and $v\in\fA^m$,
        \[
        \big\langle f, K_{W,v,y}\big\rangle_{H(K)}=\big\langle f(W)v,y\big\rangle_{\cE^m}.
        \]
    \end{enumerate}
\end{theorem}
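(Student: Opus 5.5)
The plan is the standard Moore--Aronszajn construction, adapted to cp-nc kernels. First I would define $K_{W,v,y}$ by the displayed formula and check that it is an nc function. It is graded because $K(Z,W)(uv^*)\in M_{n\times m}(\cB(\cE))$ acting on $y\in\cE^m$ gives an element of $\cE^n$, linearly in $u$. That it respects direct sums and similarities in $Z$ follows from properties (2) and (3) of the kernel, taking $B=I$: for $Z'=AZA^{-1}$ we get
\[
K(Z',W)(uv^*)y=AK(Z,W)(A^{-1}uv^*)y,
\]
which says exactly $K_{W,v,y}(AZA^{-1})=AK_{W,v,y}(Z)A^{-1}$.

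Next I would put a sesquilinear form on the span of these functions by
\[
\big\langle K_{W,v,y},K_{W',v',y'}\big\rangle:=\big\langle K(W',W)(v'v^*)y,y'\big\rangle_{\cE^{m'}},
\]
which is the same as $\langle K_{W,v,y}(W')v',y'\rangle$. The key step is positivity. Given a finite sum $f=\sum_{i=1}^N K_{W_i,v_i,y_i}$, I set $W=\oplus_i W_i$ (in $\Omega$ because $\Omega$ is an nc set), $v=\oplus_i v_i$ stacked as a column, and $y$ the stacked column. By property (2), $K(W,W)$ applied to the block matrix $[v_iv_j^*]$ is the block matrix of the $K(W_i,W_j)(v_iv_j^*)$. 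Hence
\[
\langle f,f\rangle=\langle K(W,W)(vv^*)y,y\rangle\ge 0,
\]
since $vv^*\ge0$ in $M_M(\fA)$ and $K(W,W)$ is completely positive, in fact positive, at that level. This direct-sum trick, which turns a sum of kernel functions into a single one, is the main point to get right, including the bookkeeping of the block indices. I would also check that the form is well defined, i.e. independent of the representation of $f$. This follows from the reproducing identity $\langle f,K_{W',v',y'}\rangle=\langle f(W')v',y'\rangle$, which holds on the span by linearity.

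Finally I would take the quotient by null vectors and complete. To realise the completion as functions, Cauchy--Schwarz gives $|\langle f(W)v,y\rangle|\le\|f\|\,\|K_{W,v,y}\|$. So a Cauchy sequence converges pointwise, and an element of norm zero vanishes pointwise, so the quotient is trivial on functions. The pointwise limits are again graded, respect direct sums and similarities, and are bounded linear in $u$, because all these conditions are preserved under pointwise limits. Properties (1)--(3) of the statement then hold by construction and continuity, with (2) immediate from density.
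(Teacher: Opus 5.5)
The paper does not prove this statement; it imports it from Ball--Marx--Vinnikov [BMV16, Theorem 3.1]. Your Aronszajn-type construction is a correct, self-contained proof of the part quoted here. The direct-sum step $W=\oplus_i W_i$, which turns $\langle f,f\rangle$ into $\langle K(W,W)(vv^*)y,y\rangle\ge 0$, is exactly where the nc-set hypothesis and the positivity of $K(W,W)$ are used.

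Three routine points deserve a line each when you write it out:
\begin{enumerate}
\item The bound $\|K_{W,v,y}\|^2=\langle K(W,W)(vv^*)y,y\rangle\le\|K(W,W)\|\,\|vv^*\|\,\|y\|^2$ is what upgrades your Cauchy--Schwarz estimate to norm convergence of $f_k(W)v$. It also gives $\|f(W)v\|\le\|f\|\,\|K(W,W)\|^{1/2}\|vv^*\|^{1/2}$, so the limit is bounded.
\item The completion injects into functions, because an element vanishing at every point is orthogonal to all kernel functions and hence zero by density.
\item Axiom (2) as written in the paper puts direct sums in both arguments, so your choice $B=I$ only handles similarities. The one-variable rule you need for $K_{W,v,y}(Z\oplus Z')$, namely that $K(Z\oplus Z',W)$ acts blockwise on block columns, must be derived. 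One way is to compare different groupings of the diagonal blocks of $(Z\oplus Z')\oplus(Z\oplus Z')$ against $W\oplus W$ in axiom (2); alternatively, take it from the equivalent intertwining form of the definition.
\end{enumerate}
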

 
We call the Hilbert space $H(K)$ in Theorem \ref{rkhs} as \textit{nc reproducing kernel Hilbert space} corresponding to $K$. 

\subsection{Amplification of noncommutative kernels}
Let $\fA$ be a $C^*$-algebra and $\cE$, $\cF$ be Hilbert spaces. Suppose that $K$ is a \textit{cp}-nc kernel on a set $\Omega$ with values in $\clb(\fA,\clb(\cE))$. Consider the kernel  
\[
\tilde{K}:\Omega \times \Omega \to  \bigsqcup_{m, n=1}^\infty \clb \big(M_{m\times n}(\fA), M_{m\times n}(\clb(\cle \otimes \clf))\big) 
\]
defined by 
\[\tilde{K}=K \otimes I_\clf.\]
More precisely, for $Z\in\Omega(m)$, $W\in \Omega(n)$ and $P\in M_{m\times n}(\fA)$, for all $m,n\in\bN$
\[
\tilde{K}(Z,W)(P)=K(Z,W)(P)\otimes I_{\clf}:(\cle\otimes \clf)^n \cong \cle^n \otimes \clf\to (\cle\otimes \clf)^m\cong \cle^m \otimes \clf
\]
such that 
 \[
 [K(Z,W)(P)](\eta\otimes \zeta)=[K(Z,W)(P)]\eta\otimes \zeta\quad(\eta\in\cle^n, \zeta\in\clf).
 \]
 We call the kernel $\tilde{K}$ as the \textit{amplification of the kernel $K$ by $\clf$}. The corresponding nc reproducing kernel Hilbert space $H(\tilde{K})$ is nothing but $H(K)\otimes\cF$ which is the space of $\clb(\cE\otimes\cF)$ valued nc functions on $\Omega$, that is, $H(\tilde{K})$ is the collection of nc functions 
\[
    f:\Omega\to\bigsqcup_{n=1}^{\infty} M_n\big(\cB(\fA, \cE\otimes\cF)\big)\cong\bigsqcup_{n=1}^{\infty} \cB\big(\fA^n, \cE^n\otimes\cF\big).
\]
The Hilbert space $H(\tilde{K})$ is generated by the set of kernel functions 
\[
\big\{\tilde{K}_{W,v,\eta\otimes\zeta}=K_{W,v,\eta}\otimes \zeta: W\in\Omega(n), \eta\in\cE^n, v\in\fA^n,\zeta\in\clf, n\in \bN\big\}.
\]
By Theorem \ref{rkhs}, one can view the above kernel functions in the following explicit way:
\begin{equation}\label{kernel formula}
\tilde{K}_{W, v, \eta\otimes \zeta}(Z)u=\tilde{K}(Z,W)(uv^*) (\eta\otimes \zeta)= (K(Z,W)(uv^*) \eta) \otimes \zeta,
\end{equation}
and the reproducing property of the kernel $\tilde{K}$ is given by the following formula: 
\begin{equation}\label{reproducing property}
\langle f(W)(v), \eta\otimes \zeta\rangle_{\cle^n\otimes \clf}=\langle f, \tilde{K}_{W, v, \eta\otimes \zeta}\rangle_{\clh(\tilde{K})}=\langle f, K_{W, v, \eta} \otimes \zeta\rangle_{\clh(\tilde{K})},
\end{equation}
for each $W\in \Omega(n)$, $v\in \fA^n$, $\eta\otimes \zeta\in \cle^n\otimes \clf$, $Z\in \Omega(m)$ and $u\in \fA^m$. Also, we have the following inner product formula between two different kernel functions:
\begin{align*}
\langle \tilde{K}_{W, v, \eta\otimes \zeta} , \tilde{K}_{Z, u, \gamma\otimes \xi} \rangle_{\clh(\tilde{K})}&=\langle \tilde{K}_{W, v, \eta\otimes \zeta}(Z)u, \gamma\otimes \xi\rangle_{\cle^m \otimes \clf} \nonumber\\
&=\langle \tilde{K}(Z,W)(uv^*)(\eta\otimes \zeta),\gamma\otimes \xi \rangle_{\cle^m \otimes \clf}\nonumber\\
&=\langle K(Z,W)(uv^*)\eta, \gamma \rangle_{\cle^m} \langle \zeta, \xi \rangle_\clf,
\end{align*}

for all $W\in \Omega(n)$, $v\in \fA^n$, $\eta\otimes \zeta\in \cle^n\otimes \clf$, $Z\in \Omega(m)$ and $u\in \fA^m$.

\section{Noncommutative Drury-Arveson space and multipliers}\label{vector-valued-Drury}
In this section, we study noncommutative Drury-Arveson spaces and their multiplier algebras. In addition to the scalar-valued case, we consider the general vector-valued setting, which is of independent interest and is also used in several results in later sections. For instance, Theorem \ref{dilation} employs the notion of a vector-valued noncommutative Drury-Arveson space, and the proof of Theorem \ref{inv-sub-left-shift} relies on the vector-valued intertwining result established in Theorem \ref{intertwiner of left shifts}. Here, we analyze the left and right multipliers separately, and we will see that they are linked through some commutation relations, developed and used in later sections. 

 \subsection{Noncommutative Drury-Arveson space:}\label{Sub: nc Drury} In order to define the noncommutative Drury-Arveson space, first we consider the the nc Szeg\H o kernel on $\fB_d$, we denote it by $\clk$, where
    \[
   \clk:\fB_d\times\fB_d\to\bigsqcup_{m,n=1}^{\infty} \cB\big(M_{m\times n}(\bC), M_{m\times n}(\bC)\big) 
    \]
    defined by
    \[
   \clk(Z,W)(P)=\sum_{\alpha\in \bW_d}Z^\alpha P(W^\alpha)^*,
    \]
     for $Z\in\fB_d(m)$, $W\in \fB_d(n)$ and $P\in M_{m\times n}(\bC)$. Here $\bW_d$ is the free monoid on $d$ generators, denoted here by the first $d$ natural numbers $\{1,...,d\}$. An element $\alpha\in\bW_d$ is a word the form $\alpha=\alpha_1\cdots \alpha_p$, where each $\alpha_i$ ($i=1,\ldots,p$) belongs to $\{1,\ldots,d\}$. For two elements $\alpha=\alpha_1\cdots \alpha_p, \beta=\beta_1\cdots \beta_t\in \bW_d$, $\alpha\cdot \beta$ is defined through the juxtaposition
     \[
     \alpha\cdot \beta:=\alpha_1\cdots \alpha_p\beta_1\cdots \beta_t.
     \]
    And, for an element $Z\in \fB_d$ and for $\alpha=\alpha_1\cdots \alpha_p\in \bW_d$,
    \[
    Z^{\alpha}:=Z_{\alpha_1}\cdots Z_{\alpha_p},
    \]
    with the convention that $Z^{\emptyset}=1$, where $\emptyset$ denotes the empty word.
     The nc reproducing kernel Hilbert space corresponding to $\cK$ is called the \textit{nc Drury-Arveson space} and denoted as $\cH^2_d$.  The space $\cH^2_d$ is generated by the kernel functions $\cK_{W,v,y}$ for $W\in\fB_d(m)$ and $v,y\in\bC^n$, where we can write the kernel function in explicit form as follows:
     \begin{equation}\label{Drury explicit}
    \clk_{W,v,y}(Z)u=\clk(Z,W)(uv^*)y=\sum_{\alpha\in\bW_d}Z^\alpha uv^*(W^\alpha)^*y=\sum_{\alpha\in\bW_d}\langle y, W^\alpha v\rangle Z^\alpha u,
     \end{equation}
for $Z\in\fB_d(n)$ and $u\in\bC^n$. 
It is known from  \cite[Corollary 3.7]{SSS} and \cite[Theorem 5.5]{BMV18}, that $\text{Mult}_L(\clh^2_d)\cong H^\infty(\fB_d)$, and $\|\Phi\|_\infty=\|M_\Phi\|$.

We also consider the vector-valued nc Drury-Arveson space $H_\clf(\clk)$, for some Hilbert space $\clf$, corresponding to the kernel
\[
\Tilde{\clk}:\fB_d \times \fB_d \to  \bigsqcup_{m, n=1}^\infty \clb \big(M_{m\times n}(\C), M_{m\times n}(\clb(\C \otimes \cle))\big) 
\]
defined by 
\[
\Tilde{\clk}=\clk \otimes I_\clf.
\]
Using the terminology of the previous section, $\Tilde{\clk}$ is the amplification of the kernel $\clk$ by $\clf$. Combining the equations (\ref{kernel formula}) and (\ref{Drury explicit}) we have the following explicit form of the vector-valued Drury-Arveson kernel functions:
\[
\Tilde{\clk}_{W, v,\eta\otimes \zeta}(Z)u=\Big (\sum_{\alpha\in \mathbb{W}_d} \la \eta, W^\alpha v\ra Z^\alpha u \Big )\otimes \zeta,
\]
for each $W\in\fB_d(n)$, $v\in \C^n$, $\eta \otimes \zeta \in \C^n\otimes \clf$, $Z\in\fB_d(m)$, $u\in \C^m$.

\subsection{Left multipliers}
Let $\cle$ and $\clf$ be two auxiliary Hilbert spaces. An nc function 
$\Phi$ on $\fB_d$ with values $\clb(\cle, \clf)$, that is,
\[
\Phi:\fB_d \to \bigsqcup_{n=1}^\infty M_n(\clb(\cle, \clf))\cong\bigsqcup_{n=1}^{\infty} B\big(\cle^n, \clf^n\big)
\]
is said to be a \textit{left multiplier} from $H_\cle(\clk)$ to $H_\clf(\clk)$ if 
\[\Phi f\in H_\clf(\clk)\quad(f\in H_\cle(\clk)),
\]
where $(\Phi f)(W)=\Phi(W) f(W)$ for $W\in \fB_d$. 

The multiplier $\Phi$ always associates an operator
\[M_\Phi :H_\cle(\clk)\to H_\clf(\clk)\]
defined by 
\[
(M_\Phi f)(W)=\Phi(W) f(W) \quad (W\in \fB_d).
\]
Using closed graph theorem, we prove that $M_\Phi$ is a bounded operator via the following line of argument. First observe that, for a sequence $\{f_k\}_{k\geq 1}$ and a function $f$ in $H_\cle(\clk)$
\[
|\langle (f_k(W) - f(W) )v, \eta \rangle|=|\langle (f_k- f ), K_{W, v, \eta} \rangle|\leq \|f_k-f\|\|K_{W, v,\eta}\|,
\]
for each $W\in \Omega_m$, $v\in \C^m$, and $\eta\in \cle^m$. If $\{f_k\}_{k\geq 1}$ converges to $f$ in $H_\cle(\clk)$, then for each $W\in \Omega_m$, $v\in \C^m$, $\{f_k(W)v\}_{k\geq 1}$ converges to $f(W)v$ in  $\cle^m$. Let $\{M_{\Phi}f_k\}_{k\geq 1}$ is a sequence in $H_\clf(\clk)$ and it converges to $g$ in $H_\clf(\clk)$. Therefore, by the above, $\{\Phi(W)f_k(W)v\}_{k\geq 1}$  converges to $g(W)v$  in $\clf^m$. So,
\[
g(W)v =\lim_{k\to \infty} \Phi(W)f_k(W) v = \Phi(W)f(W) v,
\]
for all $v\in \C^m$. This shows that $M_\Phi$ is bounded.

We write a part of the above argument separately as a remark for future references.
\begin{remark}\label{Remark_f_n}
If $\{f_k\}_{k\geq 1}$ is a sequence in $H_\cle(\clk)$ and that converges to $f$ in $H_\cle(\clk)$, then for each $W\in \fB_d(m)$ and $v\in \C^m$, $f_k(W)v$ converges to $f(W)v$ in $\cle^m$.
\end{remark}

The set of all left multipliers from $H_\cle(\clk)$ to $H_\clf(\clk)$ is denoted by  $\text{Mult}_L(H_\cle(\clk), H_\clf(\clk))$, that is, 
\begin{align*}
\text{Mult}_L&(H_\cle(\clk), H_\clf(\clk))\\
&:=\Big \{\Phi: \fB_d \to \bigsqcup_{n=1}^\infty M_n(\clb(\cle, \clf)): \Phi \,\text{is nc}\,\, \& \,\,\Phi f\in H_\clf(\clk),  \forall f\in H_\cle(\clk) \Big \}.
\end{align*}
The following two results are useful for our purpose.
\begin{proposition}\label{action_adj_mul}
Let $\Phi\in \text{Mult}_L(H_\cle(\clk), H_\clf(\clk))$. Then for each $W\in \fB_d$, $v\in \C^n$, and $y\otimes \zeta\in \C^n\otimes \clf$,
\[
M^*_{\Phi} \big(\clk_{W, v, y}\otimes \zeta\big )= \Tilde{\clk}_{W, v, \Phi^*(W) (y\otimes\eta)},
\]
where $\Tilde{\clk}=\clk\otimes I_{\cle}$.
\end{proposition}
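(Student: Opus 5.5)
The plan is the standard reproducing-kernel argument: test both sides of the claimed identity against a spanning set of $H_\cle(\clk)$ and compare using the reproducing property. (The statement has a typo: $\Phi^*(W)(y\otimes\eta)$ should read $\Phi^*(W)(y\otimes\zeta)$. Also, on the right, $\Tilde{\clk}=\clk\otimes I_\cle$ is the amplified kernel whose space is $H_\cle(\clk)$, where $M_\Phi^*$ takes its values.) On the left, $\clk_{W,v,y}\otimes\zeta=\Tilde{\clk}_{W,v,y\otimes\zeta}$ is a kernel function of $H_\clf(\clk)$, with the amplification now by $\clf$.

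By Theorem \ref{rkhs}(2), $H_\cle(\clk)$ is the closed span of the kernel functions $\Tilde{\clk}_{Z,u,\gamma\otimes\xi}$ with $Z\in\fB_d(m)$, $u,\gamma\in\C^m$, $\xi\in\cle$. Finite linear combinations of such elementary tensors are therefore dense, and it suffices to show
\[
\langle \Tilde{\clk}_{Z,u,\gamma\otimes\xi}, M_\Phi^*(\clk_{W,v,y}\otimes\zeta)\rangle=\langle \Tilde{\clk}_{Z,u,\gamma\otimes\xi}, \Tilde{\clk}_{W,v,\Phi^*(W)(y\otimes\zeta)}\rangle .
\]
Note that $\Phi^*(W)(y\otimes\zeta)$ is a general vector of $\C^n\otimes\cle\cong\cle^n$, not an elementary tensor. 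So I first record that the reproducing property \eqref{reproducing property} holds for arbitrary $\eta\in\cle^n$. This follows from the defining formula of Theorem \ref{rkhs} applied to the $\clb(\cle)$-valued kernel $\Tilde{\clk}$, or from linearity in $\eta$ together with the fact that $\C^n\otimes\cle$ is algebraically spanned by elementary tensors because $\C^n$ is finite dimensional.

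For the left-hand side, move $M_\Phi^*$ across and apply the reproducing property in $H_\clf(\clk)$:
\[
\langle M_\Phi\Tilde{\clk}_{Z,u,\gamma\otimes\xi}, \Tilde{\clk}_{W,v,y\otimes\zeta}\rangle=\langle \Phi(W)\,\Tilde{\clk}_{Z,u,\gamma\otimes\xi}(W)v,\; y\otimes\zeta\rangle .
\]
Then move $\Phi(W)$ across as $\Phi(W)^*$, which gives $\langle \Tilde{\clk}_{Z,u,\gamma\otimes\xi}(W)v, \Phi(W)^*(y\otimes\zeta)\rangle$. By the reproducing property in $H_\cle(\clk)$ with a general vector, this equals the right-hand side. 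Bounded linearity and density then finish the proof.

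The step needing the most care is the identification conventions: that $\Phi(W)^*$, viewed as an adjoint in $M_n(\clb(\clf,\cle))\cong\clb(\clf^n,\cle^n)$, matches the vector $\Phi^*(W)(y\otimes\zeta)$ under $\C^n\otimes\cle\cong\cle^n$. The other delicate point is the extension of the reproducing formula to non-elementary tensors. Both reduce to bookkeeping with the definitions of amplification.
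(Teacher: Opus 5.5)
Your argument is correct and is the same adjoint-plus-reproducing-property computation the paper uses: $\langle f, M_\Phi^*(\clk_{W,v,y}\otimes\zeta)\rangle=\langle \Phi(W)f(W)v, y\otimes\zeta\rangle=\langle f(W)v,\Phi(W)^*(y\otimes\zeta)\rangle=\langle f,\Tilde{\clk}_{W,v,\Phi(W)^*(y\otimes\zeta)}\rangle$. The paper runs this chain for an arbitrary $f\in H_\cle(\clk)$ rather than only for kernel functions, so your density reduction is harmless but unnecessary.
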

\begin{proof}
For $f\in H_\cle(\clk)$,
\begin{align*}
\langle f, M^*_{\Phi} \big(\clk_{W, v, y}\otimes \zeta\big)  \rangle=\langle M_\Phi f,  \clk_{W, v, y}\otimes \zeta \rangle &=\langle \Phi (W)f(W)v, y \otimes \zeta \rangle\\
&=\langle f(W)v, \Phi (W)^* (y\otimes\zeta) \rangle\\
&= \langle f,  \Tilde{\clk}_{W, v,\Phi(W)^* (y\otimes \zeta)} \rangle.
\end{align*}
This completes the proof.
\end{proof}

\begin{proposition}\label{inequ_elem}
Let $H_\cle(\clk)$ be an nc Drury-Areveson ($\cle$-valued) space on $\fB_d$. Then 
\[\|f(Z)\|_{\clb(\C^m, \C^m\otimes\cle )}\leq \|f\|_{H_\cle(\clk)} \|\clk(Z,Z)\|^{1/2}_{\clb (M_{m}(\C))},\]
for all $Z\in \fB_d(m)$ and  $f\in H_\cle(\clk)$.
\end{proposition}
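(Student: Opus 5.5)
The plan is to use the reproducing property (\ref{reproducing property}) together with Cauchy--Schwarz and the inner product formula for kernel functions. Fix $Z\in\fB_d(m)$ and $f\in H_\cle(\clk)$. To bound the operator norm of $f(Z)\colon\C^m\to\C^m\otimes\cle$, we compute $|\langle f(Z)v,\xi\rangle|$ for unit vectors $v\in\C^m$ and $\xi\in\C^m\otimes\cle$, and then take the supremum.

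First, write $\xi=\sum_k \eta_k\otimes\zeta_k$ as a finite sum with $\eta_k\in\C^m$ and $\{\zeta_k\}$ orthonormal in $\cle$; such sums are dense, and any $\xi$ in the finite-dimensional-in-the-first-factor space $\C^m\otimes\cle$ can be written this way exactly. By linearity of (\ref{reproducing property}) we get
\[
\langle f(Z)v,\xi\rangle=\Big\langle f,\sum_k \clk_{Z,v,\eta_k}\otimes\zeta_k\Big\rangle .
\]
Cauchy--Schwarz then gives $|\langle f(Z)v,\xi\rangle|\le\|f\|\,\big\|\sum_k\clk_{Z,v,\eta_k}\otimes\zeta_k\big\|$.

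Next, compute this norm with the inner product formula for amplified kernel functions. Orthonormality of the $\zeta_k$ kills the cross terms, so
\[
\Big\|\sum_k \clk_{Z,v,\eta_k}\otimes\zeta_k\Big\|^2=\sum_k\langle \clk(Z,Z)(vv^*)\eta_k,\eta_k\rangle .
\]
Since $\clk(Z,Z)$ is a linear map on $M_m(\C)$ (completely positive), $\clk(Z,Z)(vv^*)$ is a positive matrix with $\|\clk(Z,Z)(vv^*)\|\le\|\clk(Z,Z)\|\,\|vv^*\|=\|\clk(Z,Z)\|$. Hence the sum is at most $\|\clk(Z,Z)\|\sum_k\|\eta_k\|^2=\|\clk(Z,Z)\|\,\|\xi\|^2$. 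Taking square roots and the supremum over unit $v$ and $\xi$ yields the stated inequality.

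The only delicate point is the reduction to an orthonormal expansion of $\xi$ in the $\cle$-factor, which is what makes the cross terms vanish. This works since every $\xi\in\C^m\otimes\cle$ is a finite sum $\sum_{i=1}^m e_i\otimes x_i$. Orthonormalizing the span of the $x_i$ (a finite-dimensional subspace) produces the desired finite representation, so no limiting argument is even needed.
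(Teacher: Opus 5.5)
Your proof is correct and follows the same route as the paper: the reproducing property, Cauchy--Schwarz, the inner-product formula for the amplified kernel functions, and the bound $\|\clk(Z,Z)(vv^*)\|\le\|\clk(Z,Z)\|$.

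The one place where you differ is worth keeping. The paper pairs $f(Z)u$ only against unit elementary tensors $\gamma\otimes\xi$, then passes directly to $\|f(Z)u\|$. A supremum over product vectors only controls a weaker quantity than the Hilbert-space norm. For example, $e_1\otimes\zeta_1+e_2\otimes\zeta_2$ with $\zeta_1\perp\zeta_2$ unit vectors has norm $\sqrt{2}$, yet it pairs with every unit $\gamma\otimes\xi$ to modulus at most $1$.

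Your expansion $\xi=\sum_k\eta_k\otimes\zeta_k$ with orthonormal $\zeta_k$ closes this. It kills the cross terms and gives $\sum_k\|\eta_k\|^2=\|\xi\|^2$, which is exactly what makes the final supremum over all unit $\xi\in\C^m\otimes\cle$ legitimate. Equivalently, you could pair directly with the kernel function $\Tilde{\clk}_{Z,v,\xi}$ for a general $\xi\in\cle^m$ and use $\|\Tilde{\clk}_{Z,v,\xi}\|^2=\langle(\clk(Z,Z)(vv^*)\otimes I_{\cle})\xi,\xi\rangle$.
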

\begin{proof}
For $Z\in\fB_d(m)$, $u\in \C^m$, and $\gamma \otimes \xi \in \C^m\otimes \cle$ with $\|u\|=1$ and $\|\gamma\otimes \xi\|=1$,
\begin{align*}
|\la f(Z)u, \gamma\otimes \xi  \ra|_{\C^m \otimes \cle}^2=|\la f, \Tilde{\clk}_{Z, u, \gamma\otimes \xi} \ra|_{H_\cle(\clk)}^2 & \leq \|f\|_{H_\cle(\clk)}^2 \langle K_{Z, u, \gamma\otimes \xi} , K_{Z, u, \gamma\otimes \xi} \rangle_{H_\cle(\clk)}\\
&= \|f\|_{H_\cle(\clk)}^2 \langle K_{Z, u, \gamma\otimes \xi}(Z)u, \gamma\otimes \xi\rangle_{\C^m \otimes \cle}\\ 
&=\|f\|_{H_\cle(\clk)}^2\langle K(Z,Z)(uu^*)(\gamma\otimes \xi),\gamma\otimes \xi\rangle_{\C^m \otimes \cle}\\
&=\|f\|_{H_\cle(\clk)}^2\langle \clk(Z,Z)(uu^*)\gamma, \gamma \rangle_{\C^m} \langle \xi, \xi \rangle_\cle\\
&\leq \|f\|_{H_\cle(\clk)}^2 \|\clk(Z,Z)(u u^*)\|_{M_{m}(\C)}\\
&\leq \|f\|_{H_\cle(\clk)}^2 \|\clk(Z,Z)\|_{\clb (M_{m}(\C))}.
\end{align*}
The second last inequality follows by taking the supremum over $\gamma\otimes \xi$. Therefore, \[|\la f(Z)u, f(Z)u \ra|_{\C^m \otimes \cle}^2\leq \|f\|_{H_\cle(\clk)}^2 \|\clk(Z,Z)\|_{\clb (M_{m}(\C))}.\]
By taking the supremum over $u$, we have the desired inequality.
\end{proof}

We call the tuple of operators $M_z:=(M_{z_1} \otimes I_{\cle}, \ldots, M_{z_d}\otimes I_\cle)$ the tuple left shifts on $H_\cle(\clk)$, where the components are multiplication by coordinate functions, that is,  
\[
\big[\big(M_{z_i} \otimes I_{\cle} \big)f\big](W)= (W_i\otimes I_{\cle}) f(W)=\sum_{\alpha} W^{i\cdot \alpha}  f_\alpha,
\]
for $f(Z)=\sum_{\alpha\in \mathbb{W}_d} Z^\alpha f_\alpha\in H_\cle(\clk)$.

\subsection{Right multipliers and right product}
An nc function 
$\Psi$ on $\fB_d$ with values in $\clb(\cle, \clf)$, that is,
\[
\Psi:\fB_d \to \bigsqcup_{n=1}^\infty M_n(\clb(\cle, \clf))\cong\bigsqcup_{n=1}^{\infty} B\big(\cle^n, \clf^n\big)
\]
is said to be a \textit{right multiplier} from $H_\cle(\clk)$ to $H_\clf(\clk)$ if the \textit{right product} $\Psi \bullet_R f\in H_\cle(\clk)$, for all $f\in H_\cle(\clk)$, where
\[
(\Psi \bullet_R f)(W)=\Psi(W) \bullet_R f(W)\quad(W\in \fB_d),
\] 
and $\Psi(W) \bullet_R f(W)$ is defined in the following way: first, let  $\Psi(Z)=\sum_{\alpha\in \mathbb{W}_d} Z^\alpha \otimes \psi_\alpha$, where $\psi_\alpha\in \clb(\cle, \clf)$. Also, let $f\in H_\cle(\clk)=H^2(\fB_d)\otimes \cle$ and then write it as $f(Z)=\sum_{\alpha\in \mathbb{W}_d} Z^\alpha\otimes f_\alpha$, where $f_\alpha\in \cle$. the right product $(M^R_\Psi f)$ is given by
\[(M^R_\Psi f)(Z)=\Psi(Z) \bullet_R f(Z):= \sum_{\alpha, \beta} Z^{\beta \cdot \alpha}\otimes \psi_\alpha f_\beta.\]
Note that, in the scalar-valued setting, the right product reduces to $\Psi (Z) \bullet_R f(Z)= f(Z) \Psi(Z)$. More generally, an nc function $\Psi$ is said to be a \textit{right multiplier} from $H_\cle(\clk)$ to $H_\clf(\clk)$ if 
\begin{equation}\label{right_adjoint_action}
(M^R_\Psi)^* \big (\clk_{W, v, \eta}\otimes \zeta\big )=(M^R_\Psi)^* \big (K_{W, v, \eta\otimes \zeta}\big ):= \clk_{W, \Psi(W)v, \eta}\otimes \zeta=K_{W, \Psi(W)v, \eta\otimes \zeta}, 
\end{equation}
for $W\in\fB_d(n)$, $v\in \C^n$, $\eta \otimes \zeta \in \C^n\otimes \clf$. Consequently, for $f\in H_\cle(\clk)$, we define 
\[
\langle \Psi(W) \bullet_R f(W) v, \eta \otimes \zeta\rangle_{\clf^n}:=\langle f, K_{W, \Psi(W)v, \eta\otimes \zeta}\rangle_{H_\cle(\clk)}.
\]
For more details see \cite{JM}. The set of all right multipliers is denoted by $\text{Mult}_R(H_\cle(\clk), H_\clf(\clk))$, that is, 
\begin{align*}
\text{Mult}_R&(H_\cle(\clk), H_\clf(\clk))\\
&:=\Big \{\Psi: \fB_d \to \bigsqcup_{n=1}^\infty M_n(\clb(\clf, \cle)):\Psi \,\text{is nc}\,\, \& \,\,\Psi \bullet_R f\in H_\clf(\clk),  \forall f\in H_\cle(\clk) \Big \}.
\end{align*}
Similar to the case of lift multipliers, for $\Psi\in \text{Mult}_R(H_\cle(\clk), H_\clf(\clk))$, $M^R_\Psi $ is a bounded operator from $H_\cle(\clk)$ to $H_\clf(\clk)$.

The tuple of right shifts on $H_\cle(\clk)$, denoted by $\Tilde{M}_z:=(\Tilde{M}_{z_1} \otimes I_{\cle}, \ldots ,\Tilde{M}_{z_d}\otimes I_{\cle})$, defined via the identification 
\begin{align*}
\big[\big(\Tilde{M}_{z_i} \otimes I_{\cle} \big)f\big](W)= (W_i\otimes I_{\cle})\bullet_R f(W)=\sum_{\alpha} W^{\alpha \cdot i} \otimes f_\alpha,
\end{align*}
for $f(z)=\sum_{\alpha\in \mathbb{W}_d} Z^\alpha \otimes f_\alpha\in H^2(\fB_d)\otimes \cle \cong H_\cle(\clk)$. 

One can verify that the following result follows from the definition of right shifts given above. However, we give a direct proof it using the definition of right product by the coordinate functions.
\begin{proposition}\label{action_adj_right_shift}
For each $W\in \fB_d$, $v\in \C^n$, and $y\otimes \zeta\in \C^n\otimes \clf$,
\[
(\Tilde{M}_{z_i} \otimes I_{\cle})^*\big(\clk_{W, v, y}\otimes \zeta\big )= \Tilde{\clk}_{W, W_iv, y\otimes\zeta},
\]
where $\Tilde{\clk}=\clk\otimes I_{\cle}$.
\end{proposition}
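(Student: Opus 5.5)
We verify the identity by pairing both sides against an arbitrary $f\in H_\cle(\clk)$ and checking that the inner products agree. This suffices because elements of a Hilbert space are determined by their inner products against a total set; here we simply use all $f$. As in the paper's notation, $\clk_{W,v,y}\otimes\zeta$ is the kernel function $\Tilde{\clk}_{W,v,y\otimes\zeta}$, so $\zeta$ should be read as an element of $\cle$. Write $f(Z)=\sum_{\alpha\in\bW_d}Z^\alpha\otimes f_\alpha$ with $f_\alpha\in\cle$. By the definition of the right shift,
\[
\big[(\Tilde{M}_{z_i}\otimes I_\cle)f\big](W)=\sum_{\alpha}W^{\alpha\cdot i}\otimes f_\alpha=\Big(\sum_\alpha W^\alpha\otimes f_\alpha\Big)(W_i\otimes I_\cle)=f(W)\,W_i,
\]
where the product $f(W)W_i$ means composition with $W_i\otimes I_\cle$ acting on $\C^n$. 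This factorization is the key observation. It holds because $W^{\alpha\cdot i}=W^\alpha W_i$, and the sum converges absolutely in norm for $W\in\fB_d(n)$, since it is dominated by the convergent series defining $\clk(W,W)$. By Remark \ref{Remark_f_n}, $f(W)$ is the bounded operator $\C^n\to\C^n\otimes\cle$ obtained as the limit of the partial sums.

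With this in hand, we apply the reproducing property \eqref{reproducing property} twice:
\[
\big\langle f,(\Tilde{M}_{z_i}\otimes I_\cle)^*(\clk_{W,v,y}\otimes\zeta)\big\rangle
=\big\langle \big[(\Tilde{M}_{z_i}\otimes I_\cle)f\big](W)v,\;y\otimes\zeta\big\rangle
=\big\langle f(W)(W_iv),\;y\otimes\zeta\big\rangle
=\big\langle f,\;\Tilde{\clk}_{W,W_iv,y\otimes\zeta}\big\rangle .
\]
Since $f$ is arbitrary, the claimed identity follows.

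The main point requiring care is justifying $[(\Tilde{M}_{z_i}\otimes I_\cle)f](W)=f(W)W_i$ for every $f$ in the space, not just for polynomials. We would first check the identity on free polynomials, where it is a direct computation. We would then extend it by density, using the boundedness of $\Tilde{M}_{z_i}\otimes I_\cle$ (which is an isometry, since it maps monomials to distinct monomials) together with the pointwise convergence supplied by Remark \ref{Remark_f_n}. An alternative route is to verify the adjoint formula directly on the kernel functions via the explicit expansion \eqref{Drury explicit}: the kernel $\Tilde{\clk}_{W,W_iv,y\otimes\zeta}$ has coefficients $\langle y,W^\alpha W_iv\rangle=\langle y,W^{\alpha\cdot i}v\rangle$, which match the coefficients obtained by pairing with the shifted monomials.
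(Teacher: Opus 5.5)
Your proof is correct, but its main route differs from the paper's. The paper works with power-series coefficients. It expands $(\Tilde{M}_{z_i}\otimes I_\cle)f=\sum_\alpha Z^{\alpha\cdot i}\otimes f_\alpha$ and $\clk_{W,v,y}\otimes\zeta$ in monomials via \eqref{Drury explicit}, and pairs coefficients, implicitly using that the monomials are orthonormal in $\clh^2_d$. It then reindexes with $W^{\alpha\cdot i}=W^\alpha W_i$ and reassembles the sum as the pairing of $f$ with $\Tilde{\clk}_{W,W_iv,y\otimes\zeta}$. This is exactly the alternative route you sketch at the end.

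Your main route works with point evaluations instead. Once $[(\Tilde{M}_{z_i}\otimes I_\cle)f](W)=f(W)W_i$ is known, two applications of the reproducing property \eqref{reproducing property} finish the proof without expanding any kernel. This buys three things:
\begin{itemize}
\item It is cleaner and coordinate-free.
\item It avoids conjugate bookkeeping. The paper's middle line writes $\la y,W^\alpha W_iv\ra$ where its complex conjugate belongs; the slip is harmless but real.
\item It shows the proposition is the special case $\Psi(Z)=Z_i$ of the rule $(M^R_\Psi)^*\Tilde{\clk}_{W,v,\eta}=\Tilde{\clk}_{W,\Psi(W)v,\eta}$, which the paper simply adopts as a definition in \eqref{right_adjoint_action}.
\end{itemize}

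The price is that you must justify the pointwise factorization for every $f$, which the paper's $\ell^2$ pairing never needs. Both of your justifications work. Absolute convergence follows from Cauchy--Schwarz and $\sum_\alpha\|W^\alpha\|^2\le\operatorname{tr}\big(\clk(W,W)(I_n)\big)<\infty$. The density argument needs only boundedness of $\Tilde{M}_{z_i}\otimes I_\cle$ together with Remark \ref{Remark_f_n}.

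One small notational fix: $f(W)$ maps $\C^n$ into $\C^n\otimes\cle$. So the right-hand factor in your first display should be $W_i$ acting on $\C^n$, not $W_i\otimes I_\cle$, as your own parenthetical already indicates.
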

\begin{proof}
Let $f\in H_\cle(\clk)$. Suppose $f(Z)=\sum_{\alpha\in \mathbb{W}_d} Z^\alpha \otimes f_\alpha$. Then,
\begingroup
 \allowdisplaybreaks
\begin{align*}
\langle f, (\Tilde{M}_{z_i} \otimes I_{\cle})^*\big(\clk_{W, v, y}\otimes \zeta\big)  \rangle&=\langle (\Tilde{M}_{z_i} \otimes I_{\cle}) f,  \clk_{W, v, y}\otimes \zeta \rangle\\
&=\Big\langle\sum_{\alpha} Z^{\alpha.i}\otimes  f_\alpha,\Big (\sum_{\alpha} \la y, W^\alpha v\ra Z^\alpha \Big )\otimes \zeta \Big\rangle\\
&=\sum_{\alpha}\la y, W^\alpha W_i v \ra\la f_{\alpha},\zeta \ra\\
&=\Big\langle\sum_{\alpha} Z^{\alpha}\otimes  f_\alpha,\Big (\sum_{\alpha} \la y, W^\alpha W_i v\ra Z^\alpha \Big )\otimes \zeta \Big\rangle\\
&= \langle f,  \Tilde{\clk}_{W, W_iv,y\otimes \zeta} \rangle.
\end{align*}
\endgroup
This completes the proof.
\end{proof}

We end this subsection proving that the tuple of right shifts is a row isometry. We will use this property to prove a commutant lifting theorem in Section \ref{lifting and interpolation}.
Recall that an operator tuple $T=(T_1,\ldots,T_d)$ is said to be a row contraction on a Hilbert space $\clh$ if 
\[
\sum_{i=1}^d T_iT_i^*\leq I.
\]
A row contraction $T=(T_1,\ldots,T_d)$ is said to be a row isometry if
\[
T_i^*T_i=I\,\,(\text{for all}\,\, i=1,\ldots,d).
\]
\begin{proposition}\label{right_shift_row}
The tuple of right shifts $\Tilde{M}_z=(\tilde{M}_{z_1},\ldots,\Tilde{M}_{z_d})$ on $\clh^2_d$ is a row isometry. 
\end{proposition}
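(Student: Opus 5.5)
We work in the monomial model of $\clh^2_d$. By (\ref{Drury explicit}), $\langle \clk_{W,v,y},\clk_{Z,u,x}\rangle=\sum_{\alpha}\langle y,W^\alpha v\rangle\langle Z^\alpha u,x\rangle$. This identifies $\clh^2_d$ isometrically with the full Fock space $\ell^2(\bW_d)$: the nc monomials $Z\mapsto Z^\alpha$ form an orthonormal basis, and $f(Z)=\sum_\alpha Z^\alpha f_\alpha$ has $\|f\|^2=\sum_\alpha|f_\alpha|^2$. To justify this, expand each kernel function as $\clk_{W,v,y}=\sum_\alpha \overline{\langle W^\alpha v,y\rangle}\,Z^\alpha$. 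The coefficients lie in $\ell^2(\bW_d)$ because $\sum_\alpha |\langle W^\alpha v,y\rangle|^2\le \langle \clk(W,W)(vv^*)y,y\rangle<\infty$. Comparing inner products then shows that the map $\clk_{W,v,y}\mapsto$ (its coefficient sequence) is isometric. Its range is dense: if $c\in\ell^2$ is orthogonal to every coefficient sequence, then $\sum_\alpha c_\alpha W^\alpha=0$ for all $W\in\fB_d$, and taking $W$ to be compressions of the left creation operators to finite-depth subspaces, scaled by $r<1$, forces $c=0$. This is exactly the identification $H_\cle(\clk)=H^2(\fB_d)\otimes\cle$ already used implicitly in the definition of the right shifts.

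In this model $\Tilde M_{z_i}$ sends $Z^\alpha$ to $Z^{\alpha\cdot i}$, so it acts on basis vectors through the map $\alpha\mapsto\alpha\cdot i$. Right concatenation by $i$ is injective on $\bW_d$, so $\Tilde M_{z_i}$ maps an orthonormal basis onto an orthonormal set. Hence $\Tilde M_{z_i}$ is an isometry, i.e. $\Tilde M_{z_i}^*\Tilde M_{z_i}=I$. Moreover, for $i\neq j$ the ranges are spanned by the disjoint sets $\{Z^{\alpha\cdot i}\}$ and $\{Z^{\beta\cdot j}\}$, so the ranges are mutually orthogonal. This gives $\Tilde M_{z_j}^*\Tilde M_{z_i}=\delta_{ij}I$. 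Consequently each $\Tilde M_{z_i}\Tilde M_{z_i}^*$ is the orthogonal projection onto $\overline{\text{span}}\{Z^{\alpha\cdot i}:\alpha\in\bW_d\}$. These projections are mutually orthogonal, so
\[
\sum_{i=1}^d \Tilde M_{z_i}\Tilde M_{z_i}^* = I-P_{\C},
\]
where $P_\C$ is the projection onto the constants, the span of $Z^{\emptyset}$. In particular the tuple is a row contraction with isometric components, i.e. a row isometry.

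As a cross-check that avoids the monomial model, one can argue on kernel functions via Proposition \ref{action_adj_right_shift}. That result gives
\[
\sum_i\langle \Tilde M_{z_i}^*\clk_{W,v,y},\Tilde M_{z_i}^*\clk_{Z,u,x}\rangle=\sum_i\sum_\alpha\langle y,W^\alpha W_iv\rangle\langle Z^\alpha Z_iu,x\rangle=\langle \clk_{W,v,y},\clk_{Z,u,x}\rangle-\langle y,v\rangle\langle u,x\rangle,
\]
which verifies the row-contraction inequality on a dense span. The main obstacle is the first step: rigorously justifying the orthonormal monomial expansion (completeness and uniqueness of coefficients) from the abstract nc RKHS of Theorem \ref{rkhs}. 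Everything after that is combinatorics of words. If the paper already treats $\clh^2_d\cong H^2(\fB_d)$ as the Fock space, as it does in defining $\Tilde M_{z_i}$, this step can be quoted.
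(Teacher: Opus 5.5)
Your proof is correct. It shares the isometry step with the paper but obtains the row-contraction inequality by a different mechanism.

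The paper proves $\sum_i\tilde M_{z_i}\tilde M_{z_i}^*\le I$ on kernel functions. Using Proposition~\ref{action_adj_right_shift} and the identity $\clk(W,Z)(P)-\sum_i\clk(W,Z)(W_iPZ_i^*)=P$ (only the empty word survives), it identifies $I-\sum_i\tilde M_{z_i}\tilde M_{z_i}^*$ with the projection onto the constants. This is precisely the computation you relegate to a cross-check. Only then does the paper pass to the monomial expansion to get $\|\tilde M_{z_i}f\|=\|f\|$, tacitly using that the $Z^\alpha$ are orthonormal.

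You instead work entirely in the Fock model. Injectivity of $\alpha\mapsto\alpha\cdot i$ gives the isometries, and distinct last letters give mutually orthogonal ranges. You thus get the full relations $\tilde M_{z_j}^*\tilde M_{z_i}=\delta_{ij}I$ and $\sum_i\tilde M_{z_i}\tilde M_{z_i}^*=I-P_{\C}$ in one stroke.

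Your route buys uniformity and rigor. Everything rests on the single fact $\clh^2_d\cong\ell^2(\bW_d)$ with orthonormal monomials, which the paper uses without proof: it is already built into the definition of $\tilde M_{z_i}$ and into the proof of Proposition~\ref{action_adj_right_shift}. Your density argument with scaled compressions of the creation operators is sound. You should add the one-line reproducing-property check that the element with coefficient sequence $c$ is the function $Z\mapsto\sum_\alpha c_\alpha Z^\alpha$; with it, that gap is closed.

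The paper's route, in turn, reads off the defect operator directly from the kernel without choosing a basis. An analogous computation is reused for the left shifts in Proposition~\ref{M_z_is_pure_row}.
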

\begin{proof}
First we claim that 
\[
\Big(I-\sum_{i=1}^d\Tilde{M}_{z_i}\Tilde{M}_{z_i}^*\Big)f=P_{\text{const.}}f\quad\big(f\in\clh^2_d\big),
\]
where $P_{\text{const.}}$ is the orthogonal projection of $\clh^2_d$ onto the set of all constant functions in $\clh^2_d$. To justify the claim, we aim to prove for all $f\in\clh^2_d$
\begin{equation}\label{proj_onto_const}
\Big(I_n-\sum_{i=1}^d\Tilde{M}_{z_i}\Tilde{M}_{z_i}^*\Big)f(Z)=P_{\text{const.}}f(0_n)\quad\big( Z\in\fB_d(n)\big).
\end{equation}
Indeed,
\begingroup
 \allowdisplaybreaks
\begin{align*}
\Big\langle \Big(I_n-&\sum_{i=1}^d\Tilde{M}_{z_i}\Tilde{M}_{z_i}^*\Big)\clk_{Z,u,x},\clk_{W,v,y}\Big\rangle\\
&=\la\clk(W,Z)(vu^*)x,y\ra-\sum_{i=1}^d\la\clk(W,Z)(vu^*)Z_i^*x,W_i^*y\ra\\
&=\Big\langle\Big[\clk(W,Z)(vu^*)-\sum_{i=1}^d \clk(W,Z)(W_ivu^*Z_i^*)\Big]x,y\Big\rangle\\
&=\la[vu^*]x,y\ra\\
&=\la\clk_{0_n,u,x},\clk_{W,v,y}\ra. 
\end{align*}
\endgroup
So,
\[
\Big(I_n-\sum_{i=1}^d \Tilde{M}_{z_i}\Tilde{M}_{z_i}^*\Big)\clk_{Z,u,x}(W)=\clk_{0_n,u,x}(W)=\sum_{\alpha\in\bW_d}\langle x,0_n^{\alpha} u\rangle W^{\alpha}=\clk_{Z,u,x}(0_n).
\]
Therefore, through a limiting argument, the relation (\ref{proj_onto_const}) and consequently the claim is proved. 

From the claim it follows that
\[
\Big(I-\sum_{i=1}^d\Tilde{M}_{z_i}\Tilde{M}_{z_i}^*\Big)=P_{\text{const.}}\geq 0.
\]
Hence, $(\Tilde{M}_{z_1},\ldots,\Tilde{M}_{z_d})$ is a row contraction. 
Now, for $f(Z)=\sum_{\alpha\in\bW_d}Z^{\alpha}f_{\alpha}$
\[
\|\Tilde{M}_{z_i}f\|^2=\Big\langle \sum_{\alpha\in\bW_d}Z^{\alpha.i}f_{\alpha},\sum_{\alpha\in\bW_d}Z^{\alpha.i}f_{\alpha}\Big\rangle=\|f\|^2.
\]
This shows that for each $i=1,\ldots, d$, $\Tilde{M}_{z_i}$ is an isometry. This completes the proof.
\end{proof}

\section{Commutants of the shifts on noncommutative Drury-Arveson space}\label{commutant}
In this section, our aim is to characterize the commutants of the left and right shifts on nc  Drury–Arveson space. Those characterizations will be used in the next section to establish the commutant lifting theorems. We treat the cases of the right and left shifts separately.

In the following, we prove the commutativity of the right shifts and multiplication operators associate to the left multipliers. 
\begin{proposition}\label{Left_mult_and_shift}
Let $\Phi\in \text{Mult}_L(H_\cle(\clk), H_\clf(\clk))$, and let $(\Tilde{M}_{z_1} \otimes I_{\cle}, \ldots \Tilde{M}_{z_d}\otimes I_{\cle})$ and $(\Tilde{M}_{z_1} \otimes I_{\clf}, \ldots \Tilde{M}_{z_d}\otimes I_{\clf})$ be the right shifts acting on $H_\cle(\clk)$ and $H_\clf(\clk)$, respectively. Then
\[
M_\Phi \big (\Tilde{M}_{z_i} \otimes I_{\cle}\big)= \big(\Tilde{M}_{z_i} \otimes I_{\clf}\big) M_\Phi \quad (i=1, \ldots, d).
\]

\end{proposition}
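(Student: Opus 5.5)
We will verify the identity on kernel functions through adjoints. Since both sides are bounded operators from $H_\cle(\clk)$ to $H_\clf(\clk)$ ($M_\Phi$ by the closed graph argument above, the right shifts by Proposition \ref{right_shift_row}), it suffices to show
\[
\big(\Tilde{M}_{z_i} \otimes I_{\cle}\big)^* M_\Phi^* = M_\Phi^*\big(\Tilde{M}_{z_i} \otimes I_{\clf}\big)^*
\]
on the kernel functions $\clk_{W,v,y}\otimes \zeta$ with $W\in\fB_d(n)$, $v\in\C^n$, $y\otimes\zeta\in\C^n\otimes\clf$. Their linear span is dense in $H_\clf(\clk)$ by Theorem \ref{rkhs}(2), so agreement of two bounded operators there gives equality.

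For the left-hand side, Proposition \ref{action_adj_mul} gives $M_\Phi^*(\clk_{W,v,y}\otimes\zeta)=\Tilde{\clk}_{W,v,\Phi(W)^*(y\otimes\zeta)}$. This is a kernel function of $H_\cle(\clk)$ whose third slot is a general vector of $\C^n\otimes\cle$ rather than an elementary tensor. Writing that vector as a finite sum, or a norm-convergent series, of elementary tensors, and using that kernel functions depend linearly and continuously on the third slot, Proposition \ref{action_adj_right_shift} extends to
\[
(\Tilde{M}_{z_i}\otimes I_\cle)^*\Tilde{\clk}_{W,v,\xi}=\Tilde{\clk}_{W,W_iv,\xi}\qquad(\xi\in\C^n\otimes\cle).
\]
Hence the left-hand side equals $\Tilde{\clk}_{W,W_iv,\Phi(W)^*(y\otimes\zeta)}$.

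For the right-hand side, Proposition \ref{action_adj_right_shift} gives $(\Tilde{M}_{z_i}\otimes I_\clf)^*(\clk_{W,v,y}\otimes\zeta)=\Tilde{\clk}_{W,W_iv,y\otimes\zeta}=\clk_{W,W_iv,y}\otimes\zeta$. Applying Proposition \ref{action_adj_mul} with $v$ replaced by $W_iv$ then yields $\Tilde{\clk}_{W,W_iv,\Phi(W)^*(y\otimes\zeta)}$, so the two sides agree.

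The main point to check carefully is that the formula of Proposition \ref{action_adj_right_shift} holds for non-elementary third arguments. This follows from the reproducing property \eqref{reproducing property}: for $f\in H_\cle(\clk)$ and $\xi\in\C^n\otimes\cle$ we have $\langle f,\Tilde{\clk}_{W,v,\xi}\rangle=\langle f(W)v,\xi\rangle$. Both sides are conjugate-linear and continuous in $\xi$, so the identity extends from elementary tensors to all of $\C^n\otimes\cle$.

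Alternatively, one could check the identity directly on power series. Writing $\Phi(Z)=\sum_\beta Z^\beta\otimes\varphi_\beta$ and $f(Z)=\sum_\alpha Z^\alpha\otimes f_\alpha$, right multiplication by $Z_i$ appends the letter $i$ at the end of each word, while left multiplication by $\Phi$ prepends words; these two operations visibly commute. The kernel-function argument above avoids having to justify convergence of these series.
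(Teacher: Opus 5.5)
Your proposal is correct and is essentially the paper's proof: the paper pairs $M_\Phi(\Tilde{M}_{z_i}\otimes I_\cle)f$ against $\clk_{W,v,\eta}\otimes\zeta$ and moves the operators across using Propositions \ref{action_adj_mul} and \ref{action_adj_right_shift}, in the same order you apply them to the adjoints. Your check that Proposition \ref{action_adj_right_shift} also holds for the non-elementary third argument $\Phi(W)^*(y\otimes\zeta)$ supplies a step the paper uses without comment; since every vector of $\C^n\otimes\cle$ is a sum of at most $n$ elementary tensors, linearity alone suffices and no continuity argument is needed.
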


\begin{proof}
Let $f\in H_\cle(\clk)$. For $W\in\fB_d(n)$, $v\in \C^n$, $\eta \otimes \zeta \in \C^n\otimes \clf$,
\begingroup
 \allowdisplaybreaks
\begin{align*}
\big\langle M_\Phi \big (\Tilde{M}_{z_i}  \otimes I_{\cle}\big)f, \clk_{W, v, \eta}\otimes \zeta\big\rangle &=\big \langle \big ( \Tilde{M}_{z_i} \otimes I_{\cle}\big) f, M^*_\Phi \big( \clk_{W,v, \eta}\otimes \zeta\big )\big\rangle \\
&= \big\langle \big ( \Tilde{M}_{z_i} \otimes I_{\cle}\big) f, \Tilde{\clk}_{W,v, \Phi(W)^*(\eta\otimes \zeta)}\big\rangle \quad (\text{by Proposition }\ref{action_adj_mul})\\
&= \big\langle f,  \big ( \Tilde{M}_{z_i} \otimes I_{\cle}\big)^* \Tilde{\clk}_{W,v, \Phi(W)^*(\eta\otimes \zeta)}\big\rangle\\
&= \la f,   \Tilde{\clk}_{W,W_iv, \Phi(W)^*(\eta\otimes \zeta)}\ra\quad (\text{by Proposition }\ref{action_adj_right_shift})\\
&= \la M_{\Phi}f,   \Tilde{\clk}_{W,W_iv, \eta\otimes \zeta}\ra\quad (\text{by Proposition }\ref{action_adj_mul})\\
&= \big\langle \big (\Tilde{M}_{z_i}  \otimes I_{\cle}\big)M_{\Phi}f,  \clk_{W, v, \eta}\otimes \zeta\big\rangle\quad (\text{by Proposition }\ref{action_adj_right_shift}).
\end{align*}
\endgroup
Hence the result follows.
\end{proof}

Similarly, we prove the commutativity of left shifts and multiplication operators corresponding to right multipliers.
\begin{proposition}\label{Right_mult_and_shift}
Let $\Psi\in \text{Mult}_R(H_\cle(\clk), H_\clf(\clk))$, and let $(M_{z_1} \otimes I_{\cle}, \ldots M_{z_d}\otimes I_{\cle})$ and $(M_{z_1} \otimes I_{\clf}, \ldots M_{z_d}\otimes I_{\clf})$ be the left shifts acting on $H_\cle(\clk)$ and $H_\clf(\clk)$, respectively. Then
\[
M_\Psi \big (M_{z_i} \otimes I_{\cle}\big)= \big(M_{z_i} \otimes I_{\clf}\big) M_\Psi \quad (i=1, \ldots, d).
\]
\end{proposition}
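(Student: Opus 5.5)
The plan is to follow the proof of Proposition \ref{Left_mult_and_shift} with the roles of left and right exchanged. It suffices to test both sides against the kernel functions $\clk_{W,v,\eta}\otimes\zeta$, since these span a dense subspace of $H_\clf(\clk)$ by Theorem \ref{rkhs}. Equivalently, I will show that the adjoints $M_\Psi^*(M_{z_i}\otimes I_{\clf})^*$ and $(M_{z_i}\otimes I_{\cle})^*M_\Psi^*$ agree on these kernel functions. The two ingredients are the adjoint action of a right multiplier, given by (\ref{right_adjoint_action}), $M_\Psi^*\tilde{\clk}_{W,v,\eta\otimes\zeta}=\tilde{\clk}_{W,\Psi(W)v,\eta\otimes\zeta}$, and the adjoint action of the left shifts.

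First I will record the analogue of Proposition \ref{action_adj_right_shift} for left shifts:
\[
(M_{z_i}\otimes I_{\cle})^*\big(\clk_{W,v,y}\otimes\zeta\big)=\tilde{\clk}_{W,v,(W_i^*\otimes I)(y\otimes\zeta)}=\clk_{W,v,W_i^*y}\otimes\zeta .
\]
This follows from the reproducing property (\ref{reproducing property}): $\langle (M_{z_i}\otimes I)f,\clk_{W,v,y}\otimes\zeta\rangle=\langle (W_i\otimes I)f(W)v,y\otimes\zeta\rangle=\langle f(W)v,W_i^*y\otimes\zeta\rangle$. Put differently, $M_{z_i}\otimes I_\cle$ is the left multiplier by the coordinate function $Z\mapsto Z_i\otimes I_\cle$, so this is the special case $\Phi(Z)=Z_i\otimes I$ of Proposition \ref{action_adj_mul}.

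Next comes the computation. For $f\in H_\cle(\clk)$,
\[
\langle M_\Psi(M_{z_i}\otimes I_\cle)f,\clk_{W,v,\eta}\otimes\zeta\rangle=\langle (M_{z_i}\otimes I_\cle)f,\clk_{W,\Psi(W)v,\eta}\otimes\zeta\rangle=\langle f,\clk_{W,\Psi(W)v,W_i^*\eta}\otimes\zeta\rangle .
\]
On the other side,
\[
\langle (M_{z_i}\otimes I_\clf)M_\Psi f,\clk_{W,v,\eta}\otimes\zeta\rangle=\langle M_\Psi f,\clk_{W,v,W_i^*\eta}\otimes\zeta\rangle=\langle f,\clk_{W,\Psi(W)v,W_i^*\eta}\otimes\zeta\rangle .
\]
The two expressions coincide. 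Density of the kernel functions together with boundedness of all the operators involved then gives the identity.

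The main point requiring care is bookkeeping rather than analysis. The vector $v$ is where the right multiplier acts, while $\eta$ (the $\C^n$ part of the output vector) is where the left shift acts. In the paper's notation $\Psi(W)v$ and the tensor identification $\C^n\otimes\clf$ need to be read consistently, so that $W_i^*\otimes I_\clf$ acts only on the $\C^n$ factor and $\Psi(W)$ only on the slot $v$. These two operations act on different slots of $\clk_{W,\cdot,\cdot}$, and that is exactly why they commute. If the coefficient form is preferred, the same identity can be verified directly: $(M_\Psi M_{z_i}f)(Z)=\sum Z^{i\cdot\beta\cdot\alpha}\psi_\alpha f_\beta=(M_{z_i}M_\Psi f)(Z)$, by associativity of juxtaposition in $\bW_d$.
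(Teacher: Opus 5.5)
Your proposal is correct and follows essentially the same route as the paper: you pair both sides with the kernel functions $\clk_{W,v,\eta}\otimes\zeta$, move the right multiplier across using \eqref{right_adjoint_action} and the left shift across using its adjoint action (Proposition \ref{action_adj_mul} with $\Phi(Z)=Z_i\otimes I$), and observe that the two operations act on different slots of the kernel function. Your version, with $W_i^*\eta$ in the third slot, is in fact the correct form of the paper's computation, which writes $W_i\eta$ there.
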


\begin{proof}
Let $f\in H_\cle(\clk)$. For $W\in\fB_d(n)$, $v\in \C^n$, $\eta \otimes \zeta \in \C^n\otimes \clf$,
\begingroup
 \allowdisplaybreaks
\begin{align*}
\big \langle M^R_\Psi \big (M_{z_i}  \otimes I_{\cle}\big)f, \clk_{W, v, \eta}\otimes \zeta\big\rangle &= \la \big ( M_{z_i} \otimes I_{\cle}\big) f, M^{R*}_\Psi \big( \clk_{W,v, \eta}\otimes \zeta\big )\big\rangle \\
&= \big \langle \big ( M_{z_i} \otimes I_{\cle}\big) f, \Tilde{\clk}_{W,\Psi(W)v, \eta\otimes \zeta}\big\rangle\quad (\text{by \eqref{right_adjoint_action}})\\
&= \big \langle f,  \big ( M_{z_i} \otimes I_{\cle}\big)^* \Tilde{\clk}_{W,\Psi(W)v, \eta\otimes \zeta}\big\rangle \\
&= \big \langle f,   \Tilde{\clk}_{W,\Psi(W)v, W_i\eta\otimes \zeta}\big\rangle\quad (\text{by Proposition }\ref{action_adj_mul})\\
&= \la M^R_{\Psi}f,   \Tilde{\clk}_{W, v, W_i\eta\otimes \zeta}\ra\quad (\text{by \eqref{right_adjoint_action}})\\
&= \la \big (M_{z_i}  \otimes I_{\cle}\big)M^R_{\Psi}f,  \clk_{W, v, \eta}\otimes \zeta\ra\quad (\text{by Proposition }\ref{action_adj_mul}).
\end{align*}
\endgroup
Hence, the result follows.
\end{proof}

Let us introduce a few more notations. For $\eta\in \cle$, we consider the function 
$$g_\eta:=(I\otimes \eta):\fB_d \to \bigsqcup_{n=1}^\infty M_n(\clb(\C, \C\otimes \cle)),$$ 
where, for $Z\in \fB_d(m)$,
\[g_\eta(Z)= (I_m\otimes \eta)(Z):\C^m \to \C^m \otimes \cle\]
is defined by 
\[g_\eta(Z)x= x\otimes \eta \quad (x\in\C^m).\]
Clearly, for $\eta\in \cle$, $g_\eta\in H_\cle(\clk)$. Moreover, $\|g_\eta\|_{H_\cle(\clk)}=\|\eta\|_\cle$. 

Let $X$ be a bounded operator from $H_\cle(\clk)$ into $H_\clf(\clk)$. We define a function 
\[
\Phi:\fB_d\to \bigsqcup_{n=1}^\infty M_n(\clb(\cle, \clf))\cong \bigsqcup_{n=1}^\infty \clb(\cle^n, \clf^n) \cong \bigsqcup_{n=1}^\infty \clb(\C^n\otimes\cle, \C^n\otimes\clf),
\] 
where, for $Z\in \fB_d(n)$, $\Phi(Z)$ is defined on the elementary tensor elements of $\C^n\otimes\cle$ in the following way:
\begin{align}\label{mult_id}
\Phi(Z)(x\otimes \eta)= [X(g_\eta)](Z)x= [X(1\otimes \eta)](Z) (x) \quad (x\in\C^n, \eta\in\cle).
\end{align}

Then we extend $\Phi(Z)$ to a linear operator on the linear combinations of the elementary tensor elements in $\C^n\otimes \cle$ by defining
\[
\Phi(Z)\Big(\sum_{i=1}^k x_i\otimes \eta_i\Big)=\sum_{i=1}^k X(g_{\eta_i})(Z)x_i= \sum_{i=1}^k X(1\otimes \eta_i)(Z) (x_i),\]
for $x_i\in\C^n$ and $\eta_i\in\cle$ ($i=1,\ldots,k$).

In the next proposition, we prove that the function $\Phi$, defined in \eqref{mult_id}, is an nc function.
\begin{proposition}
The function $\Phi$, defined in \eqref{mult_id}, is an nc function.
\end{proposition}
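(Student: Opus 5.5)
Since $X(g_\eta)$ lies in $H_\clf(\clk)$, it is already an nc function on $\fB_d$, and $\Phi$ is built from these functions one column $\eta$ at a time. The plan is therefore to transfer the three defining properties (graded, respects direct sums, respects similarities) from the functions $X(g_\eta)$ to $\Phi$. It is cleanest to check the single equivalent condition stated in Subsection \ref{Sub: nc function}, namely that $\Phi$ respects intertwinings. Before that, I will record that $\Phi(Z)$ is well defined on the algebraic tensor product $\C^n\otimes\cle$. This holds because the map $(x,\eta)\mapsto [X(g_\eta)](Z)x$ is bilinear: linearity in $x$ is clear, and linearity in $\eta$ follows from $g_{a\eta+b\eta'}=ag_\eta+bg_{\eta'}$ together with the linearity of $X$. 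By the universal property of the tensor product, this gives a linear map. Gradedness is then immediate, since $Z\in\fB_d(n)$ gives $\Phi(Z)\colon\C^n\otimes\cle\to\C^n\otimes\clf$, i.e. $\Phi(Z)\in M_n(\clb(\cle,\clf))$.

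For intertwinings, let $Z\in\fB_d(n)$, $Y\in\fB_d(m)$ and $S\in M_{n\times m}(\C)$ with $ZS=SY$. The goal is $\Phi(Z)(S\otimes I_\cle)=(S\otimes I_\clf)\Phi(Y)$. By linearity it suffices to test this on elementary tensors $x\otimes\eta$ with $x\in\C^m$. The left side equals $\Phi(Z)(Sx\otimes\eta)=[X(g_\eta)](Z)Sx$, and the right side equals $(S\otimes I_\clf)[X(g_\eta)](Y)x$. Since $h:=X(g_\eta)$ is an nc function with values in $\clb(\C,\clf)$, it respects intertwinings, so $h(Z)(S\otimes I)=(S\otimes I)h(Y)$ as maps $\C^m\to\C^n\otimes\clf$. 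This is exactly the required identity.

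The only real point needing care is to make precise that an element $h$ of the $\clf$-valued space $H_\clf(\clk)$ respects intertwinings in the amplified sense $h(Z)S=(S\otimes I_\clf)h(Y)$. I would derive this from Theorem \ref{rkhs}, which says the elements of $H(K)$ are nc functions. Alternatively, one can argue directly: the kernel functions $\Tilde{\clk}_{W,v,\eta\otimes\zeta}$ respect intertwinings by the explicit formula $\sum_\alpha\langle\eta,W^\alpha v\rangle Z^\alpha u\otimes\zeta$, because $Z^\alpha S=SY^\alpha$. The property then passes to limits by Remark \ref{Remark_f_n}, which gives pointwise convergence $f_k(W)v\to f(W)v$, together with the density statement (2) of Theorem \ref{rkhs}.

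As an independent check, I would also verify directly that direct sums are respected. Taking $S$ to be the coordinate inclusions and projections of $\C^n\oplus\C^m$ in the intertwining identity recovers $\Phi(Z\oplus Y)=\Phi(Z)\oplus\Phi(Y)$. Similarity invariance is the special case of invertible square $S$.
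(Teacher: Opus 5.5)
Your argument for the nc property itself is correct, and more careful than the paper's, which only remarks that $\Phi$ is nc ``since $Xg_\eta$ is an nc function''. Your bilinearity argument makes $\Phi(Z)$ well defined on all of $\C^n\otimes\cle$; the algebraic tensor product is already the whole space, since $\C^n$ is finite-dimensional. Reducing $\Phi(Z)(S\otimes I_\cle)=(S\otimes I_\clf)\Phi(Y)$ to the amplified intertwining relation $h(Z)S=(S\otimes I_\clf)h(Y)$ for $h=Xg_\eta\in H_\clf(\clk)$ is exactly the right mechanism.

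The gap is in the sentence ``Gradedness is then immediate \dots\ i.e.\ $\Phi(Z)\in M_n(\clb(\cle,\clf))$''. You have only shown that $\Phi(Z)$ is a \emph{linear} map $\C^n\otimes\cle\to\C^n\otimes\clf$. Gradedness with values in $\clb(\cle,\clf)$ requires the entries of $\Phi(Z)$ to be bounded, and linearity does not give this when $\cle$ is infinite-dimensional. This boundedness is what the paper's proof spends nearly all of its effort on.

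It is easy to supply. By Proposition \ref{inequ_elem} (applied in $H_\clf(\clk)$) and $\|g_\eta\|=\|\eta\|$,
\[
\|\Phi(Z)(x\otimes\eta)\|=\|[Xg_\eta](Z)x\|\le\|\clk(Z,Z)\|^{1/2}\,\|X\|\,\|x\|\,\|\eta\|.
\]
Now write $\xi\in\C^n\otimes\cle$ as $\xi=\sum_{j=1}^n e_j\otimes\eta_j$, so that $\|\xi\|^2=\sum_j\|\eta_j\|^2$. The triangle inequality and Cauchy--Schwarz then give $\|\Phi(Z)\xi\|\le\sqrt{n}\,\|\clk(Z,Z)\|^{1/2}\|X\|\,\|\xi\|$.

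Use the finiteness of $n$ in this way rather than summing over an orthonormal basis $\{x_i\otimes\eta_j\}$ with a Pythagorean step: the images under $\Phi(Z)$ of orthogonal elementary tensors need not be orthogonal. With this estimate inserted, your proof is complete.
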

\begin{proof}
We first show that, for all $Z\in \fB_d(n)$, $\Phi(Z)$, which is defined above, extends to a bounded linear operator from $\C^n \otimes \cle$ into $\C^n \otimes \clf$. To this end, for $x\otimes \eta\in \C^n\otimes \cle$, we calculate,
\begingroup
 \allowdisplaybreaks
\begin{align*}
\|\Phi(Z)(x\otimes \eta)\|_{\C^n\otimes \clf}
&=\|(X g_\eta)(Z)x\|_{\C^n\otimes \clf}\\
&\leq\|(X g_\eta)(Z)\|_{\clb(\C^n, \C^n\otimes \clf)} \|x\|_{\C^n}\\
&\leq \|\clk(Z,Z)\|^{1/2}_{\clb (M_{n}(\C))} \|X g_\eta\|_{H_\clf(\clk)} \|x\|_{\C^n} \quad (\text{by Proposition  \ref{inequ_elem}})\\
&\leq \|\clk(Z,Z)\|^{1/2}_{\clb (M_{n}(\C))} \|X\| \| g_\eta\|_{H_\cle(\clk)} \|x\|_{\C^n} \\
&= \|\clk(Z,Z)\|^{1/2}_{\clb (M_{n}(\C))} \|X\| \| \eta\|_{\cle} \|x\|_{\C^n} \\
&= \|\clk(Z,Z)\|^{1/2}_{\clb (M_{n}(\C))} \|X\| \|x\otimes\eta\|_{\C^n\otimes \cle}. 
\end{align*}
\endgroup
This shows that for $Z\in \fB_d$, $\Phi(Z)$ is bounded on elementary tensors vectors of $\C^n \otimes \cle$. Thus, $\Phi(Z)$ is bounded on an orthonormal basis $\{x_i\otimes \eta_j: i=1,\ldots,n\, \text{and}\, j\in\bN\}$ (where $x_i$, $i=1,\ldots, n$, is an orthonormal basis of $\C^n$ and $\{\eta_j\}_{j\in\bN}$ is an  orthonormal basis of $\cle$). 
Indeed, for $c_1, \ldots, c_k\in \C$,
\begin{align*}
\Big\|\Phi(Z)\Big(\sum_{i=1}^k c_i(x_i\otimes\eta_i)\Big)\Big\|_{\C^n \otimes \clf}^2&=\Big\|\Big(\sum_{i=1}^k \Phi(Z)(c_i x_i\otimes\eta_i)\Big)\Big\|_{\C^n \otimes \clf}^2\\
&\leq \|\clk(Z,Z)\|_{\clb (M_{m}(\C))} \|X\|^2 \sum_{i=1}^k \|c_i (x_i\otimes\eta_i)\|_{\C^n \otimes \cle}^2\\
&= \|\clk(Z,Z)\|_{\clb (M_{m}(\C))} \|X\|^2 \Big\|\sum_{i=1}^k c_i( x_i\otimes\eta_i)\Big\|_{\C^n \otimes \cle}^2 ,
\end{align*}
where the last equality follows from the Pythagorean theorem. Therefore, $\Phi(Z)$ is bounded on $\text{span}\, \{e_i\otimes \eta_j: i=1,\ldots,n\, \text{and}\, j\in\bN\}$. Therefore $\Phi(Z)$ has a bounded extension (by taking limit on Cauchy sequences) on $\C^n\otimes\cle$, denoted again by $\Phi(Z)$, to abuse the notation.
Hence $\Phi$ is a well-defined function. It is clear that $\Phi$  is nc function since $Xg_\eta$ is an nc function.
\end{proof}
The next theorem characterizes the intertwiners of right shifts. 

\begin{theorem}\label{intertwiner of right shifts}
Let $(\Tilde{M}_{z_1} \otimes I_{\cle}, \ldots, \Tilde{M}_{z_d}\otimes I_{\cle})$ and $(\Tilde{M}_{z_1} \otimes I_{\clf}, \ldots, \Tilde{M}_{z_d}\otimes I_{\clf})$ be the right shifts acting on $H_\cle(\clk)$ and $H_\clf(\clk)$, respectively. Suppose $X$ is a bounded operator from $H_\cle(\clk)$ into $H_\clf(\clk)$. Then $X \big (\Tilde{M}_{z_i} \otimes I_{\cle}\big)= \big(\Tilde{M}_{z_i} \otimes I_{\clf}\big) X$ $(i=1, \ldots, d)$ if and only if $X=M_\Phi$ for some $\Phi\in \text{Mult}_L(H_\cle(\clk), H_\clf(\clk))$.
\end{theorem}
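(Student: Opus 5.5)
The ``if'' direction is exactly Proposition \ref{Left_mult_and_shift}, so the plan concentrates on the converse. Assume $X(\Tilde{M}_{z_i}\otimes I_\cle)=(\Tilde{M}_{z_i}\otimes I_\clf)X$ for $i=1,\ldots,d$. Take $\Phi$ to be the nc function defined in \eqref{mult_id}; the preceding proposition already shows it is well defined and nc. What remains is to show $Xf=\Phi f$ for every $f\in H_\cle(\clk)$. It then follows that $\Phi$ is a left multiplier and that $X=M_\Phi$.

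\textbf{Step 1: monomials.} Start with $f=Z^\alpha\otimes\eta$. Iterating the right shifts gives $Z^\alpha\otimes\eta=(\Tilde{M}_{z_{\alpha_1}}\otimes I_\cle)\cdots(\Tilde{M}_{z_{\alpha_p}}\otimes I_\cle)g_\eta$; the order of the factors must be checked, since each right shift appends its letter at the right end of the word. The intertwining relation then gives
\[
X(Z^\alpha\otimes\eta)=(\Tilde{M}_{z_{\alpha_1}}\otimes I_\clf)\cdots(\Tilde{M}_{z_{\alpha_p}}\otimes I_\clf)\,Xg_\eta .
\]
Next, write $Xg_\eta(Z)=\sum_\beta Z^\beta\otimes h_\beta$. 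The right product with $Z^\alpha$ puts the monomial $Z^\alpha$ to the right of $Z^\beta$, so
\[
X(Z^\alpha\otimes\eta)(Z)=\sum_\beta Z^\beta Z^\alpha\otimes h_\beta .
\]
Evaluating at $Z\in\fB_d(n)$ and applying this to $x\in\C^n$ yields $(Xg_\eta)(Z)(Z^\alpha x)$, which by \eqref{mult_id} equals $\Phi(Z)(Z^\alpha x\otimes\eta)=\Phi(Z)\big((Z^\alpha\otimes\eta)(Z)x\big)$. So $X f=\Phi f$ on monomials, and by linearity on all nc polynomials with coefficients in $\cle$.

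\textbf{Step 2: limits.} Polynomials are dense in $H_\cle(\clk)$. For a general $f$, choose polynomials $p_k\to f$. Then $Xp_k\to Xf$ in $H_\clf(\clk)$, and by Remark \ref{Remark_f_n}:
\begin{itemize}
\item $(Xp_k)(Z)x\to (Xf)(Z)x$;
\item $p_k(Z)x\to f(Z)x$ in $\C^n\otimes\cle$.
\end{itemize}
Since $\Phi(Z)$ is bounded, $\Phi(Z)p_k(Z)x\to\Phi(Z)f(Z)x$. Hence $(Xf)(Z)=\Phi(Z)f(Z)$. This shows $\Phi f=Xf\in H_\clf(\clk)$, so $\Phi\in\text{Mult}_L(H_\cle(\clk),H_\clf(\clk))$ and $X=M_\Phi$.

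\textbf{Main obstacle.} The delicate point is the identity in Step 1, namely that right-multiplying the function $Xg_\eta$ by $Z^\alpha$ gives the pointwise product $(Xg_\eta)(Z)\,Z^\alpha$ acting on $x$. In the vector-valued case this requires care with the tensor identification $\C^n\otimes\clf$: $Z^\alpha$ acts on the $\C^n$ factor and is applied before $Xg_\eta(Z)$. I would verify it coefficientwise through the series expansion, and separately check the convergence of $\sum_\beta Z^\beta Z^\alpha\otimes h_\beta$ for $Z\in\fB_d$. As a fallback, the identity can be obtained by testing against kernel functions using Proposition \ref{action_adj_right_shift}: pairing with $\clk_{W,v,y}\otimes\zeta$ produces $\Tilde{\clk}_{W,W^{\alpha}v,\ldots}$, which is exactly evaluation at $W^\alpha v$.
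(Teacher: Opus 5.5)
Your proposal is correct and follows the paper's proof essentially step for step. You define $\Phi$ by \eqref{mult_id} and use the intertwining relation to get $Xf=\Phi f$ on monomials $Z^{\alpha}\otimes\eta$, since the right shifts act on $Xg_\eta$ by precomposing $(Xg_\eta)(Z)$ with a monomial in $Z$; density, Remark \ref{Remark_f_n} and boundedness of $\Phi(Z)$ then finish the argument. On the ordering you flagged, the product $(\Tilde{M}_{z_{\alpha_1}}\otimes I_{\cle})\cdots(\Tilde{M}_{z_{\alpha_p}}\otimes I_{\cle})g_\eta$ as written equals $Z^{\alpha^{\dagger}}\otimes\eta$ with $\alpha^{\dagger}=\alpha_p\cdots\alpha_1$, which is exactly the reversal the paper records; this is harmless because the same operator word acts on both sides and $\alpha$ ranges over all of $\bW_d$.
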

\begin{proof}
Let $\Tilde{M}_\cle=(\Tilde{M}_{z_1} \otimes I_{\cle}, \ldots, \Tilde{M}_{z_d}\otimes I_{\cle})$ and $\Tilde{M}_\clf=(\Tilde{M}_{z_1} \otimes I_{\clf}, \ldots, \Tilde{M}_{z_d}\otimes I_{\clf})$. Since $X \big (\Tilde{M}_{z_i} \otimes I_{\cle}\big)= \big(\Tilde{M}_{z_i} \otimes I_{\clf}\big) X$ $(i=1, \ldots, d)$, we have 
\begin{align*}
X \Tilde{M}^\alpha_\cle=\Tilde{M}^\alpha_\clf X\quad (\alpha\in \mathbb{W}_d).
\end{align*}
Now, for $Z\in\fB_d(n)$ and $\eta\in \cle$,
\begin{align*}
(X \Tilde{M}^\alpha_\cle) g_\eta(Z)=(X \Tilde{M}^\alpha_\cle) (I_n\otimes \eta)(Z)=X (Z^{\alpha^\dag} \otimes \eta),
\end{align*}
where for $\alpha=\alpha_1\cdots \alpha_k\in \mathbb{W}_d$, $\alpha^\dag=\alpha_k\cdots \alpha_1$.
On the other hand, for $Z\in\fB_d(n)$, $\eta\in \cle$, and $x\in \C^n$,
\begin{align*}
(\Tilde{M}^\alpha_\cle X) g_\eta(Z)x=(\Tilde{M}^\alpha_\cle) (X g_\eta)(Z)x
&=(\Tilde{M}^\alpha_\cle)\bullet_R (Xg_\eta)(Z)x\\
&=(Xg_\eta)(Z) (Z^{\alpha^{\dag}} x)\\
&=\Phi(Z)(Z^{\alpha^{\dag}} x \otimes \eta)\quad (\text{by the definition of $\Phi$})\\
&=\Phi(Z)(Z^{\alpha^{\dag}}  \otimes \eta)(x).
\end{align*}
Hence we have 
\begin{align}\label{qual_X}
X(Z^{\alpha^{\dag}} \otimes\eta)=\Phi(Z)(Z^{\alpha^{\dag}}  \otimes \eta).
\end{align}
 It is easy to see that $\cls=\text{span}\, \{Z^{\alpha^{\dag}}\otimes\eta : Z\in\fB_d, \eta\in \cle, \alpha\in \mathbb{W}_d\}$ is dense in $H_\cle(\clk)$. Therefore,
\begin{align}\label{qual_X_S}
Xf(Z)=\Phi(Z)f(Z) \quad (f\in \cls).
\end{align}
Let $\{f_k\}_{k\geq 1}$ be a sequence in $\cls$ and that converges to $f$ in $H_\cle(\clk)$. Then by Remark \ref{Remark_f_n}, for each $Z\in \fB_d(n)$ and $x\in \C^n$, $f_k(Z)x$ converges  $f(Z)x$ in  $\cle^n$. Clearly, $X f_k$ converges to $X f$, and thus, for each $Z\in \fB_d(n)$ and $x\in \C^n$, $Xf_k(Z)x$ converges  $Xf(Z)x$ in  $\cle^n$. 
Now 
\begin{align*}
Xf(Z)x=\lim_{k\to \infty} X f_k(Z)x= \lim_{l\to \infty}\Phi(Z) f_k(Z)x=\Phi f(Z)x  \quad (Z\in \fB_d(n), x\in \C^n).
\end{align*}
Hence, $Xf=\Phi f$ for $f\in H_\cle(\clk)$. This shows that $\Phi f\in H_\clf(\clk)$ for all $f\in H_\cle(\clk)$, and thus $\Phi\in \text{Mult}_L(H_\cle(\clk), H_\clf(\clk))$. Thus, $X=M_{\Phi}$.
Converse follows from Proposition \ref{Left_mult_and_shift}. This completes the proof.
\end{proof}

We have a similar result for the intertwiners of left shift operators.
\begin{theorem}\label{intertwiner of left shifts}
Let $(M_{z_1} \otimes I_{\cle}, \ldots, M_{z_d}\otimes I_{\cle})$ and $(M_{z_1} \otimes I_{\clf}, \ldots, M_{z_d}\otimes I_{\clf})$ be the left shifts acting on $H_\cle(\clk)$ and $H_\clf(\clk)$, respectively. Suppose $X$ is a bounded operator from $H_\cle(\clk)$ into $H_\clf(\clk)$. Then $X \big (M_{z_i} \otimes I_{\cle}\big)= \big(M_{z_i} \otimes I_{\clf}\big) X$ $(i=1, \ldots, d)$ if and only if $X=M^R_\Phi$ for some $\Phi\in \text{Mult}_R(H_\cle(\clk), H_\clf(\clk))$.
\end{theorem}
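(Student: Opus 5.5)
The proof mirrors that of Theorem \ref{intertwiner of right shifts}, with left and right interchanged. The converse direction is immediate: if $X=M^R_\Phi$ for some $\Phi\in \text{Mult}_R(H_\cle(\clk), H_\clf(\clk))$, then Proposition \ref{Right_mult_and_shift} gives $X(M_{z_i}\otimes I_\cle)=(M_{z_i}\otimes I_\clf)X$ for every $i$. So the work lies in the forward direction.

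Suppose $X$ intertwines the left shifts. I would define the symbol exactly as in \eqref{mult_id}, using the constants $g_\eta=1\otimes\eta$. Write $(Xg_\eta)(Z)=\sum_\beta Z^\beta\otimes \phi_\beta(\eta)$, where $\phi_\beta(\eta)\in\clf$. By the estimate preceding Theorem \ref{intertwiner of right shifts}, each map $\eta\mapsto\phi_\beta(\eta)$ is a bounded operator $\psi_\beta\in\clb(\cle,\clf)$ with $\|\psi_\beta\|\le\|X\|$. Set
\[
\Phi(Z)=\sum_\beta Z^\beta\otimes\psi_\beta .
\]
This is an nc function by the same argument as the preceding proposition, since $Xg_\eta$ is nc.

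From the intertwining relation we get $XM_\cle^\alpha=M_\clf^\alpha X$ for all words $\alpha$. Apply this to $g_\eta$. On the left side, $M^\alpha_\cle g_\eta$ is the monomial $Z^\alpha\otimes\eta$. On the right side, left multiplication by $Z^\alpha$ applied to $Xg_\eta$ gives
\[
\sum_\beta Z^{\alpha\cdot\beta}\otimes\psi_\beta\eta .
\]
By the definition of the right product, this is exactly $\Phi(Z)\bullet_R (Z^\alpha\otimes\eta)$. Hence
\[
X(Z^\alpha\otimes\eta)=M^R_\Phi(Z^\alpha\otimes\eta)
\]
on the dense span $\cls$ of monomials. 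By linearity, $X f=\Phi\bullet_R f$ for all $f\in\cls$.

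To pass to all of $H_\cle(\clk)$ I would not go through pointwise limits of $\Phi(Z)\bullet_R f_k(Z)$, since the right product is not pointwise multiplication. Instead I would verify the adjoint formula \eqref{right_adjoint_action} for $X$, namely
\[
X^*\big(\clk_{W,v,\eta}\otimes\zeta\big)=\clk_{W,\Phi(W)v,\eta}\otimes\zeta .
\]
It suffices to test this against monomials $Z^\alpha\otimes e$. The left side gives
\[
\langle X(Z^\alpha\otimes e),\clk_{W,v,\eta}\otimes\zeta\rangle=\sum_\beta\langle \eta,W^{\alpha\cdot\beta}v\rangle\langle\psi_\beta e,\zeta\rangle .
\]
The right side gives
\[
\langle\eta,W^\alpha\Phi(W)v\rangle\text{-type terms}=\sum_\beta\langle\eta, W^\alpha W^\beta v\rangle\langle \psi_\beta e,\zeta\rangle,
\]
after interpreting $\Phi(W)v$ in $\C^n\otimes\clb(\cle,\clf)$ through the coefficients $\psi_\beta$. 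These agree. Both sides of the adjoint formula are bounded in the test vector, so the identity extends by density. The formula then identifies $X$ with $M^R_\Phi$, shows $\Phi\bullet_R f=Xf\in H_\clf(\clk)$ for all $f$, and hence $\Phi\in\text{Mult}_R$.

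The main obstacle is this last step. The convergence of $\sum_\beta W^\beta\otimes\psi_\beta$ at points $W$, and the well-definedness of $\Phi(W)v$ as used in \eqref{right_adjoint_action}, must be justified. I would handle it by noting that $\Phi(W)=(Xg_\cdot)(W)$ converges for each fixed $\eta$ because $Xg_\eta\in H_\clf(\clk)$. The uniform bound from Proposition \ref{inequ_elem} then gives convergence in operator form.
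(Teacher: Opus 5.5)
Your proposal is correct and matches the paper's proof up to the last step. Both use the symbol $\Phi$ built from $Xg_\eta$ as in \eqref{mult_id}, and both obtain $X(Z^\alpha\otimes\eta)=\Phi\bullet_R(Z^\alpha\otimes\eta)$ by applying $XM^\alpha_\cle=M^\alpha_\clf X$ to $g_\eta$. Both get the converse from Proposition \ref{Right_mult_and_shift}.

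The difference is in passing from monomials to all of $H_\cle(\clk)$.
\begin{itemize}
\item The paper takes $f_k\to f$ in the span of monomials and writes $\lim_k[M^R_\Phi f_k](Z)x=[M^R_\Phi f](Z)x$. This tacitly assumes continuity of $f\mapsto(\Phi\bullet_R f)(Z)x$, which is not available a priori, precisely because, as you note, the right product is not pointwise.
\item You instead identify $X^*$ on kernel functions with the adjoint action \eqref{right_adjoint_action}, by pairing against monomials, and then use the paper's adjoint-based description of $\text{Mult}_R$. This is the more careful route. Its cost is the obstacle you flag: $\clk_{W,\Phi(W)v,\eta}\otimes\zeta$ must be an honest vector of $H_\cle(\clk)$.
\end{itemize}

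Your sketch of that point is loose. Pointwise convergence for each $\eta$ together with a bound on the limit does not give operator-norm convergence. It is easily repaired:
\begin{itemize}
\item Since $\sum_\beta\|\psi_\beta e\|^2=\|Xg_e\|^2\le\|X\|^2\|e\|^2$, factor $\sum_{|\beta|=k}W^\beta\otimes\psi_\beta$ as the row $[W^\beta\otimes I_\clf]_{|\beta|=k}$ times the column $[I_n\otimes\psi_\beta]_{|\beta|=k}$. This gives norm at most $\|W\|^k\|X\|$, so the series for $\Phi(W)$ converges in operator norm.
\item Let $h$ be the vector whose $\gamma$-th coefficient is $(v^*\otimes I_\cle)\Phi(W)^*(W^{\gamma*}\eta\otimes\zeta)$. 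It has norm at most $\|v\|\,\|\Phi(W)\|\,\|\zeta\|\,\langle\clk(W,W)(I_n)\eta,\eta\rangle^{1/2}$.
\item $h$ is the norm limit of your partial sums over $|\beta|\le N$, and it pairs with every $Z^\alpha\otimes e$ exactly as $X^*(\clk_{W,v,\eta}\otimes\zeta)$ does.
\end{itemize}

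A shorter way to close the gap in either argument is to work with coefficients. Each coefficient $(\Phi\bullet_R f)_\mu=\sum_{\gamma\cdot\beta=\mu}\psi_\beta f_\gamma$ is a finite sum, and coefficient functionals are continuous on $H_\clf(\clk)$. Hence $(Xf)_\mu=\lim_k(Xf_k)_\mu=(\Phi\bullet_R f)_\mu$ for the polynomial truncations $f_k$ of $f$. This involves neither kernel functions nor any convergence question about $\Phi(W)$.
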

\begin{proof}
Let $M_\cle=(M_{z_1} \otimes I_{\cle}, \ldots, M_{z_d}\otimes I_{\cle})$ and $M_\clf=(M_{z_1} \otimes I_{\clf}, \ldots, M_{z_d}\otimes I_{\clf})$. Since $X \big (M_{z_i} \otimes I_{\cle}\big)= \big(M_{z_i} \otimes I_{\clf}\big) X$ $(i=1, \ldots, d)$, we have 
\begin{align*}
X M^\alpha_\cle=M^\alpha_\clf X\quad (\alpha\in \mathbb{W}_d).
\end{align*}
Now, for $Z\in(Z_1, \ldots, Z_d)\in\fB_d(n)$ and $\eta\in \cle$,
\begin{align*}
(X M^\alpha_\cle) g_\eta(Z)=(X M^\alpha_\cle) (I_n\otimes \eta)(Z)=X (Z^{\alpha} \otimes \eta).
\end{align*}
On the other hand, for $Z\in\fB_d(n)$, $\eta\in \cle$, and $x\in \C^n$,
\begin{align*}
(M^\alpha_\cle X) g_\eta(Z)x &=(M^\alpha_\cle) (X g_\eta)(Z)x\\
&=(M^\alpha_\cle)\Phi(Z)(x\otimes\eta) \quad (\text{by the definition of $\Phi$})\\
&=\big[\Phi(Z)\bullet_R M^\alpha_\cle(Z)\big](x\otimes\eta)\\
&=M^R_\Phi(Z^\alpha x \otimes \eta)\\
&=M^R_\Phi(Z^\alpha  \otimes \eta)(x).
\end{align*}
Hence we have 
\[
X(Z^\alpha \otimes\eta)=M^R_\Phi(Z^\alpha  \otimes \eta).
\]
 It is easy to see that $\cls=\text{span } \{Z^\alpha\otimes\eta : Z\in\fB_d, \eta\in \cle, \alpha\in \mathbb{W}_d\}$ is dense in $H_\cle(\clk)$. Therefore,
\begin{align*}
[Xf](Z)=\big[M^R_\Phi f\big](Z) \quad (f\in \cls).
\end{align*}
Let $\{f_k\}_{k\geq 1}$ be a sequence in $\cls$ and that converges to $f$ in $H_\cle(\clk)$. Then by Remark \ref{Remark_f_n}, for each $Z\in \fB_d(n)$, $x\in \C^n$, $f_k(Z)x$ converges  $f(Z)x$ in  $\cle^n$. Clearly, $X f_k$ converges to $X f$, and thus, for each $Z\in \fB_d(n)$, $x\in \C^n$, $Xf_k(Z)x$ converges  $Xf(Z)x$ in  $\cle^n$. 
Now 
\begin{align*}
Xf(Z)x=\lim_{k\to \infty} X f_k(Z)x= \lim_{k\to \infty}\big[M^R_\Phi f_k\big](Z)x=\big[M^R_\Phi f\big](Z)x  \quad (Z\in \fB_d, x\in \C^n).
\end{align*}
Hence, $Xf=M^R_\Phi f$ for all $f\in H_\cle(\clk)$. Thus $\Phi\in \text{Mult}_R(H_\cle(\clk), H_\clf(\clk))$ and $X=M_{\Phi}$.
Converse follows from Proposition \ref{Right_mult_and_shift}. This completes the proof.
\end{proof}

\section{noncommutative commutant lifting theorem and noncommutative Nevanlinna-Pick interpolation}\label{lifting and interpolation}
In this section, we prove a commutant lifting theorem for nc Drury-Arveson space. Then, using commutant lifting, we will prove the nc Pick interpolation in the noncommutative ball.

Let $\Tilde{\clk}=\clk \otimes I_\cle$, where $\clk$ be the nc Szeg\H o kernel on $\fB_d$.
For a set $\Omega\subseteq \fB_d$, we define
\[\clh_\Omega(\Tilde{\clk}):=\overline{\text{span}}\,\{ \clk_{W, v, y}\otimes \eta : W\in \Omega_n, v, y\in \C^n: n\in \mathbb{N}, \eta \in \cle\}.\]
For $\Omega\subseteq \bM^d$, the \textit{nc envelop} of $\Omega$, denoted by $[\Omega]_{nc}$, is the smallest nc set containing the set $\Omega$.

We recall the following result from \cite{SSS}. For the sake of completeness, we provide a proof here.
\begin{lemma}\label{RKHS_full nc}
Let $\Omega\subseteq \fB_d$, and let $[\Omega]_{nc}$ be the nc envelope of $\Omega$. Then
\[\clh_\Omega(\Tilde{\clk})= \clh_{[\Omega]_{nc}}(\Tilde{\clk}).\]
\end{lemma}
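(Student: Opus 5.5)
The inclusion $\clh_\Omega(\Tilde{\clk})\subseteq \clh_{[\Omega]_{nc}}(\Tilde{\clk})$ is immediate, since $\Omega\subseteq[\Omega]_{nc}$. So the content of Lemma \ref{RKHS_full nc} is the reverse inclusion. For that it suffices to show that every kernel function $\clk_{W,v,y}\otimes\eta$ with $W\in[\Omega]_{nc}$ already lies in $\clh_\Omega(\Tilde{\clk})$.

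First I will describe $[\Omega]_{nc}$ explicitly. Since the paper's nc sets are only required to be closed under direct sums, the smallest nc set containing $\Omega$ is
\[
\{W^{(1)}\oplus\cdots\oplus W^{(k)} : k\in\bN,\ W^{(j)}\in\Omega\}.
\]
This set contains $\Omega$ and is closed under direct sums. Any nc set containing $\Omega$ contains all such finite direct sums, by induction on $k$. Each such direct sum lies in $\fB_d$, because $\|\sum_i W_iW_i^*\|$ of a block-diagonal tuple is the maximum of the blockwise norms.

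The main step is a direct-sum decomposition of kernel functions. Let $W=W^{(1)}\oplus W^{(2)}$ with $W^{(j)}\in\fB_d(n_j)$. Write $v=(v_1,v_2)$ and $y=(y_1,y_2)$ according to $\C^{n}=\C^{n_1}\oplus\C^{n_2}$. Since $W^\alpha=(W^{(1)})^\alpha\oplus(W^{(2)})^\alpha$, formula \eqref{Drury explicit} gives
\[
\langle y, W^\alpha v\rangle=\langle y_1,(W^{(1)})^\alpha v_1\rangle+\langle y_2,(W^{(2)})^\alpha v_2\rangle .
\]
Hence
\[
\clk_{W,v,y}(Z)u=\clk_{W^{(1)},v_1,y_1}(Z)u+\clk_{W^{(2)},v_2,y_2}(Z)u
\]
for all $Z\in\fB_d(m)$ and $u\in\C^m$, that is, $\clk_{W,v,y}=\clk_{W^{(1)},v_1,y_1}+\clk_{W^{(2)},v_2,y_2}$. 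Alternatively, one can derive this from the direct-sum property (2) of the kernel applied with $P=u\,v^*$ in block form. Tensoring with $\eta$ preserves the identity. By induction on $k$, a kernel function at $W^{(1)}\oplus\cdots\oplus W^{(k)}$ is a finite sum of kernel functions at the points $W^{(j)}\in\Omega$, and so it lies in $\clh_\Omega(\Tilde{\clk})$. Taking closed linear spans then gives $\clh_{[\Omega]_{nc}}(\Tilde{\clk})\subseteq\clh_\Omega(\Tilde{\clk})$.

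The only delicate point is the description of the nc envelope. If one instead used a notion of nc set that is also closed under unitary similarity, the envelope would contain unitary conjugates $U^*WU$ as well. In that case one would add a step using the similarity property (3) of the kernel, or directly $(U^*WU)^\alpha=U^*W^\alpha U$, to get $\clk_{U^*WU,v,y}=\clk_{W,Uv,Uy}$. With the paper's definition this extra step is not needed, and the rest of the argument is routine.
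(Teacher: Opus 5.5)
Your proof is correct, but it takes a different route from the paper.

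\textbf{The paper's route.} The paper uses an annihilator argument. It takes $f\in\clh_{[\Omega]_{nc}}(\Tilde{\clk})$ with $f\perp\clh_\Omega(\Tilde{\clk})$. The reproducing property gives $f(W)=0$ for all $W\in\Omega$. Since $f$ is an nc function and respects direct sums, $f$ vanishes on all of $[\Omega]_{nc}$. Hence $f$ is orthogonal to every generator of $\clh_{[\Omega]_{nc}}(\Tilde{\clk})$, so $f=0$.

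\textbf{Your route.} You work directly with the generators. You show that a kernel function at $W^{(1)}\oplus\cdots\oplus W^{(k)}\in[\Omega]_{nc}$ is a finite sum of kernel functions at the points $W^{(j)}\in\Omega$. This is exactly the identity of Lemma \ref{sum of nc kernel at points} (with $c_1=c_2=1$), read from right to left.

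\textbf{What each buys.} Your argument is more explicit and gives slightly more: even the unclosed linear spans of kernel functions over $\Omega$ and over $[\Omega]_{nc}$ coincide. The paper's argument needs no computation with the kernel. It is also insensitive to which closure operations define the envelope, because vanishing of an nc function propagates through direct sums, similarities and restrictions to invariant subspaces. So the same three lines would handle a full nc envelope. Your approach would instead need a separate kernel identity for each such operation, such as your $\clk_{U^*WU,v,y}=\clk_{W,Uv,Uy}$, or more generally $\clk_{S^{-1}WS,v,y}=\clk_{W,Sv,(S^{-1})^*y}$. For the paper's definition of nc sets (closure under direct sums only), your direct-sum step alone suffices.
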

\begin{proof}
 It is clear that $\clh_\Omega(\Tilde{\clk})\subseteq \clh_{[\Omega]_{nc}}(\Tilde{\clk})$, since $\Omega\subseteq [\Omega]_{nc}$. For other inclusion, let $f\in \clh_{[\Omega]_{nc}}(\Tilde{\clk}) $ such that $f\perp \clh_\Omega(\Tilde{\clk})$. Now for $W\in \Omega_n, v, y\in \C^n$, 
\begin{align*}
\la f, \clk_{W, v, y}\otimes \eta\ra =\la f(W)v, y\otimes \eta\ra=0.
\end{align*}
This shows that $f(W)=0$ for all $W\in \Omega$. Since $f$ is an nc function, therefore $f(W)=0$ for all $W\in [\Omega]_{nc}$. This completes the proof of the lemma.
\end{proof}

In the following, for $y=(y_1, \ldots, y_n)\in \cle^n$ and $c\in \C$, by $cy$ we mean $(cy_1, \ldots, cy_n)$.

\begin{lemma}\label{sum of nc kernel at points}
Let $K$ be a nc reproducing kernel on $\Omega$. Then
for $Z\in \Omega_n$, $u\in \cla^n$,  $x\in \cle^n$ and $W \in \Omega_m$, $v\in \cla^m$, $ y\in \cle^m$, and $c_1, c_2\in \C$, 
\[c_1K_{Z, u,x}+ c_2 K_{W, v, y}=K_{ Z\oplus W, u\oplus v, c_1x\oplus c_2y}.\]
\end{lemma}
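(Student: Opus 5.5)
The plan is to verify the identity pointwise: two elements of $H(K)$ are equal as nc functions once they agree at every $X\in\Omega(p)$ and every $a\in\cla^p$. First note that $Z\oplus W\in\Omega(n+m)$ because $\Omega$ is an nc set, so the right-hand side is a well-defined kernel function. By Theorem \ref{rkhs}(1),
\[
K_{Z\oplus W,\,u\oplus v,\,c_1x\oplus c_2y}(X)a=K(X,Z\oplus W)\big(a(u\oplus v)^*\big)(c_1x\oplus c_2y).
\]
Here $a(u\oplus v)^*=\begin{bmatrix} au^* & av^*\end{bmatrix}$ is a $1\times 2$ block matrix in $M_{p\times(n+m)}(\cla)$.

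The key step is the block-row identity
\[
K(X,Z\oplus W)\big(\begin{bmatrix}P_1 & P_2\end{bmatrix}\big)=\begin{bmatrix}K(X,Z)(P_1) & K(X,W)(P_2)\end{bmatrix},
\]
for $P_1\in M_{p\times n}(\cla)$ and $P_2\in M_{p\times m}(\cla)$. Granting it, the right-hand side applied to $c_1x\oplus c_2y$ equals
\[
K(X,Z)(au^*)(c_1x)+K(X,W)(av^*)(c_2y).
\]
By linearity of the values of $K$ in the vector argument, this is $c_1K_{Z,u,x}(X)a+c_2K_{W,v,y}(X)a$. Since $X$ and $a$ are arbitrary, this gives the lemma.

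The main obstacle is the block-row identity. The direct-sum axiom (2) as stated puts a direct sum in both variables at once, not in the second variable alone.

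\emph{General route.} Apply axiom (2) with left point $X\oplus X$, right point $Z\oplus W$, and argument $\begin{bmatrix}P_1&P_2\\0&0\end{bmatrix}$. This yields the block matrix whose top row is $\begin{bmatrix}K(X,Z)(P_1) & K(X,W)(P_2)\end{bmatrix}$ and whose bottom row is zero. To return from $X\oplus X$ to $X$, I would use that an nc kernel respects intertwinings in the first variable, namely $K(X,Y)(SP)=SK(X',Y)(P)$ when $XS=SX'$ (see \cite{BMV16}). Taking $S=\begin{bmatrix}I&0\end{bmatrix}$, which satisfies $X\begin{bmatrix}I&0\end{bmatrix}=\begin{bmatrix}I&0\end{bmatrix}(X\oplus X)$, extracts exactly the top row.

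\emph{Concrete route.} In the case of main interest, the nc Szeg\H o kernel $\clk$ (and its amplification $\Tilde{\clk}$), the identity is immediate from the explicit formula. Since $(Z\oplus W)^\alpha=Z^\alpha\oplus W^\alpha$,
\[
\sum_\alpha X^\alpha\begin{bmatrix}P_1&P_2\end{bmatrix}\big((Z\oplus W)^{\alpha}\big)^*=\begin{bmatrix}\sum_\alpha X^\alpha P_1(Z^\alpha)^* & \sum_\alpha X^\alpha P_2(W^\alpha)^*\end{bmatrix},
\]
which is precisely the required identity. I would present this verification as a check, in addition to the general intertwining argument.
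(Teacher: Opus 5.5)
Your proof is correct, but it takes a different route from the paper's. The paper argues weakly rather than pointwise. For arbitrary $f\in H(K)$ it writes $\langle f, K_{Z\oplus W,u\oplus v,c_1x\oplus c_2y}\rangle=\langle f(Z\oplus W)(u\oplus v),c_1x\oplus c_2y\rangle$. It then splits this using $f(Z\oplus W)=f(Z)\oplus f(W)$, since elements of $H(K)$ are nc functions by Theorem \ref{rkhs}, and recombines the two terms as $\langle f, c_1K_{Z,u,x}+c_2K_{W,v,y}\rangle$. Because the direct-sum property is applied to the one-variable function $f$ rather than to the two-variable kernel, the obstacle you flagged (axiom (2) splits both slots at once) never arises, and the proof takes a few lines.

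Your route instead establishes the kernel-level identity $K(X,Z\oplus W)\bigl(\begin{bmatrix}P_1 & P_2\end{bmatrix}\bigr)=\begin{bmatrix}K(X,Z)(P_1) & K(X,W)(P_2)\end{bmatrix}$. Your combination of axiom (2) at $X\oplus X$ with the intertwining $X\begin{bmatrix}I & 0\end{bmatrix}=\begin{bmatrix}I & 0\end{bmatrix}(X\oplus X)$ proves it correctly. The intertwining rule you invoke also follows from the paper's axioms (2) and (3), via the similarity $\begin{bmatrix}I & S\\ 0 & I\end{bmatrix}$, which fixes $X\oplus X'$ when $XS=SX'$. What your approach buys is a structural identity for $K$ itself that never uses the Hilbert space $H(K)$; what it costs is the extra appeal to intertwinings.

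You can avoid that appeal. Evaluating both sides at $X\oplus X$ needs only axiom (2), because the second slot $Z\oplus W$ is already a direct sum. Since both sides are nc functions, agreement at $X\oplus X$ then forces agreement at $X$.
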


\begin{proof}
Let $f\in H(K)$. Then
\begingroup
 \allowdisplaybreaks
\begin{align*}
\la f, K_{ Z\oplus W, u\oplus v, c_1x\oplus c_2y} \ra&=\la f(Z\oplus W)(u\oplus v), c_1x\oplus c_2y\ra_{\C^{n+m}}\\
&=\la (f(Z)\oplus  f(W))(u\oplus v), c_1x\oplus c_2y\ra_{\C^{n+m}}\\
&=\la f(Z) u, c_1x\ra_{\C^n} +   \la f(W) v,c_2y)\ra_{\C^{m}}\\
&=\Bar{c_1}\la f, K_{Z, u,x} \ra + \Bar{c_2}\la f,  K_{W, v, y}\ra\\
&=\la f, c_1K_{Z, u,x} + c_2K_{W, v, y}\ra.
\end{align*}
\endgroup
This completes the proof.
\end{proof}

A subspace $\clq\subseteq H_\cle(\clk)$ is said to be \textit{right shift co-invariant} if
\[\big (\Tilde{M}_{z_i}\otimes I_{\cle}\big)^*(\clq)\subseteq \clq \quad (i=1,\ldots, d).\]
 The following is a commutant lifting theorem corresponding to the right shifts.

\begin{theorem}\label{commutant_lifting theorem}
Let $\cle$ be a Hilbert space. Suppose $\clq$ is right shift co-invariant subspaces of $H_\cle(\clk)$, and $X\in \clb(\clq)$ with $\|X\|\leq 1$. If
\[X \big (P_{\clq} (\Tilde{M}_{z_i} \otimes I_{\cle})|_{\clq}\big)= \big(P_{\clq}(\Tilde{M}_{z_i} \otimes I_{\cle})_{\clq}\big) X \quad (i=1, \ldots, d),\]
then there exists a left multipliers $\Phi\in \text{Mult}_L(H_\cle(\clk))$ such that
\[\|M_\Phi\|=\|X\| \quad \text{ and }\quad X=P_{\clq} M_\Phi|_{\clq}.\]
\end{theorem}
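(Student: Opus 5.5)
The plan is to reduce the statement to the abstract commutant lifting theorem for row contractions. We use that the tuple of right shifts $\Tilde{M}_\cle=(\Tilde{M}_{z_1}\otimes I_\cle,\ldots,\Tilde{M}_{z_d}\otimes I_\cle)$ is a row isometry on $H_\cle(\clk)$. This follows from Proposition \ref{right_shift_row} tensored with $I_\cle$: the defect is $P_{\text{const.}}\otimes I_\cle\geq 0$ and each $\Tilde{M}_{z_i}\otimes I_\cle$ is an isometry.

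\textbf{Compression.} Put $T_i:=P_\clq(\Tilde{M}_{z_i}\otimes I_\cle)|_\clq$. Since $\clq$ is co-invariant, $T_i^*=(\Tilde{M}_{z_i}\otimes I_\cle)^*|_\clq$. So $T=(T_1,\ldots,T_d)$ is a row contraction, and $\Tilde{M}_\cle$ is a row isometric dilation of $T$ with $\clq$ co-invariant.

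\textbf{Minimality.} We want $H_\cle(\clk)=\overline{\text{span}}\{\Tilde{M}_\cle^{\alpha}\clq:\alpha\in\bW_d\}$. If this fails, we replace $H_\cle(\clk)$ by this smallest reducing-type subspace $\clh_0\supseteq\clq$. The restriction of $\Tilde{M}_\cle$ to $\clh_0$ is still a row isometry and is the minimal isometric dilation of $T$. We then lift into $\clh_0$ and extend by $0$ on $\clh_0^\perp$. The subspace $\clh_0$ is invariant under $\Tilde{M}_\cle$, but the extension has to intertwine on all of $H_\cle(\clk)$, so $\clh_0^\perp$ should be invariant too. To ensure this, I would first try to show directly that $\clh_0$ is all of $H_\cle(\clk)$ or at least reducing. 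An alternative route is to use the Wold decomposition of row isometries, in which $\Tilde{M}_\cle$ is pure, so that any $\Tilde{M}_\cle$-invariant subspace containing the wandering data reduces. If that gets messy, the fallback is to apply the Frazho–Bunce–Popescu lifting theorem in its form for an arbitrary (not necessarily minimal) row isometric dilation $V$ of $T$ with $\clq$ co-invariant. That form yields $Y$ on the dilation space with $YV_i=V_iY$, $P_\clq Y=XP_\clq$ (equivalently $Y^*|_\clq=X^*$), and $\|Y\|=\|X\|$.

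\textbf{Key input.} By the Popescu noncommutative commutant lifting theorem, $X$ commutes with the row contraction $T$ and $\|X\|\le1$. Hence there is $Y\in\clb(H_\cle(\clk))$ with
\[
Y(\Tilde{M}_{z_i}\otimes I_\cle)=(\Tilde{M}_{z_i}\otimes I_\cle)Y\ (i=1,\ldots,d),\qquad Y^*\clq\subseteq\clq,\qquad Y^*|_\clq=X^*,\qquad \|Y\|=\|X\|.
\]
By Theorem \ref{intertwiner of right shifts} with $\clf=\cle$, we get $Y=M_\Phi$ for some $\Phi\in\text{Mult}_L(H_\cle(\clk))$. Then $\|M_\Phi\|=\|X\|$, and since $\clq$ is invariant under $M_\Phi^*$, we have $P_\clq M_\Phi|_\clq=(M_\Phi^*|_\clq)^*=X$.

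\textbf{Main obstacle.} The delicate point is the middle step: matching the paper's dilation $\Tilde{M}_\cle$ with the hypotheses of Popescu's lifting theorem. This means either verifying minimality or invoking the non-minimal version. I would first try to show minimality, noting that $\clq$ contains the relevant kernel functions as in Lemma \ref{RKHS_full nc}.
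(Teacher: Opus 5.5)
Your proposal is correct and follows the paper's argument. The paper also (i) uses that the right shifts form a row isometry dilating the compressed tuple via the inclusion $\clq\hookrightarrow H_\cle(\clk)$, (ii) invokes Popescu's noncommutative commutant lifting theorem to obtain $Y$ commuting with the right shifts with $Y^*|_\clq=X^*$ and $\|Y\|=\|X\|$, and (iii) applies Theorem \ref{intertwiner of right shifts} to write $Y=M_\Phi$.

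The minimality issue you flag, which the paper passes over, is harmless and needs no Wold-decomposition argument. The subspace $\clh_0=\overline{\text{span}}\{\Tilde M_\cle^\alpha\clq:\alpha\in\bW_d\}$ is reducing, because $(\Tilde M_{z_i}\otimes I_\cle)^*(\Tilde M_{z_j}\otimes I_\cle)=\delta_{ij}I$ and $\clq$ is co-invariant. So you can lift on $\clh_0$, where the restricted tuple is the minimal isometric dilation, and extend by $0$ on $\clh_0^\perp$ to get the required $Y$.
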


\begin{proof}
It is clear that the noncommutative tuple $(P_{\clq} (\Tilde{M}_{z_1} \otimes I_{\cle})|_{\clq}, \ldots, P_{\clq} (\Tilde{M}_{z_d} \otimes I_{\cle})|_{\clq})$ dilates to $(\Tilde{M}_{z_1}\otimes I_{\cle}, \ldots, \Tilde{M}_{z_d}\otimes I_{\cle})$ on $H_{\cle}(\clk)$ via the dilation map $\Pi=\iota:\clq \to H_\cle(\clk)$, defined by $f\mapsto f$. That is, the isometry $\Pi$ satisfies 
\[
\Pi \big(P_{\clq} (\Tilde{M}_{z_1} \otimes I_{\cle})|_{\clq}\big )^*= \big(\Tilde{M}_{z_1} \otimes I_{\cle}\big )^*\Pi.  
\]
Due to Proposition \ref{right_shift_row}, the tuple $(\Tilde{M}_{z_1}\otimes I_{\cle}, \ldots, \Tilde{M}_{z_d}\otimes I_{\cle})$ is a row isometry. By \cite[Theorem 3.2]{P89}, there exists an operator $Y$ on $H_{\cle}(\clk)$ such that 
\[ X^*=Y^*|_{\clq},  \quad \|X\|=\|Y\|,\]
and 
\[Y  (\Tilde{M}_{z_i} \otimes I_{\cle})= (\Tilde{M}_{z_i} \otimes I_{\cle}) Y \quad (i=1, \ldots, d).\]
By Theorem \ref{intertwiner of right shifts}, there exists a left multipliers $\Phi\in \text{Mult}_L(H_\cle(\clk))$ such $Y=M_{\Phi}$. This completes the proof.
\end{proof}

The consequence of the above commutant lifting theorem is the following intertwining lifting result.

\begin{corollary}\label{intertwining_lifting}
Let $\cle$ and $\clf$ be two Hilbert spaces. Suppose $\clq_1$ and $\clq_2$ are right shift co-invariant subspaces of $H_\cle(\clk)$ and $H_\clf(\clk)$, respectively, and $X\in \clb(\clq_1, \clq_2)$ with $\|X\|\leq 1$. If
\[X \big (P_{\clq_1} (\Tilde{M}_{z_i} \otimes I_{\cle})|_{\clq_1}\big)= \big(P_{\clq_2}(\Tilde{M}_{z_i} \otimes I_{\clf})_{\clq_2}\big) X \quad (i=1, \ldots, d),\]
then there exists a left multiplier $\Phi\in \text{Mult}_L(H_\cle(\clk), H_\clf(\clk))$ such that $\|M_\Phi\|=\|X\|\leq 1$ and $X=P_{\clq_2} M_\Phi|_{\clq_1}$.
\end{corollary}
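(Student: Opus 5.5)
The plan is to deduce the corollary from Theorem \ref{commutant_lifting theorem} by the standard $2\times 2$ matrix trick. Set $\clg=\cle\oplus\clf$ and identify $H_\clg(\clk)$ with $H_\cle(\clk)\oplus H_\clf(\clk)$. Under this identification the right shifts satisfy $\Tilde{M}_{z_i}\otimes I_{\clg}=(\Tilde{M}_{z_i}\otimes I_\cle)\oplus(\Tilde{M}_{z_i}\otimes I_\clf)$. Put $\clq=\clq_1\oplus\clq_2$. Since each $\clq_j$ is co-invariant under the corresponding right shift adjoints, $\clq$ is right shift co-invariant in $H_\clg(\clk)$. Moreover the compression $P_\clq(\Tilde{M}_{z_i}\otimes I_\clg)|_\clq$ is the direct sum of the compressions $T_i^{(1)}=P_{\clq_1}(\Tilde{M}_{z_i}\otimes I_{\cle})|_{\clq_1}$ and $T_i^{(2)}=P_{\clq_2}(\Tilde{M}_{z_i}\otimes I_{\clf})|_{\clq_2}$.

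Next, define $\hat X\in\clb(\clq)$ by $\hat X(q_1\oplus q_2)=0\oplus Xq_1$, i.e. the block matrix with $X$ in the lower-left corner. Then $\|\hat X\|=\|X\|\le 1$. A direct block computation shows that the hypothesis $XT_i^{(1)}=T_i^{(2)}X$ is exactly the statement $\hat X(T_i^{(1)}\oplus T_i^{(2)})=(T_i^{(1)}\oplus T_i^{(2)})\hat X$. Theorem \ref{commutant_lifting theorem} then gives $\hat\Phi\in\text{Mult}_L(H_\clg(\clk))$ with $\|M_{\hat\Phi}\|=\|X\|$ and $\hat X=P_\clq M_{\hat\Phi}|_\clq$.

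Now compress back. Let $\iota_1:H_\cle(\clk)\to H_\clg(\clk)$ be the inclusion and $P_2:H_\clg(\clk)\to H_\clf(\clk)$ the projection, and set $Y=P_2M_{\hat\Phi}\iota_1$. Both $\iota_1$ and $P_2$ intertwine the right shifts, and $M_{\hat\Phi}$ commutes with them by Proposition \ref{Left_mult_and_shift}. Hence $Y(\Tilde{M}_{z_i}\otimes I_\cle)=(\Tilde{M}_{z_i}\otimes I_\clf)Y$, and Theorem \ref{intertwiner of right shifts} yields $\Phi\in\text{Mult}_L(H_\cle(\clk),H_\clf(\clk))$ with $Y=M_\Phi$. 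Concretely, $\Phi$ is the $(2,1)$ entry of $\hat\Phi(Z)$.

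It remains to check the two conclusions. For $q_1\in\clq_1$, we have $P_{\clq_2}M_\Phi q_1=P_{\clq_2}P_2M_{\hat\Phi}(q_1\oplus0)$, which is the $\clq_2$-component of $\hat X(q_1\oplus 0)$, namely $Xq_1$. So $X=P_{\clq_2}M_\Phi|_{\clq_1}$. For the norm, $\|M_\Phi\|\le\|M_{\hat\Phi}\|=\|X\|$, while $\|X\|\le\|M_\Phi\|$ because $X$ is a compression of $M_\Phi$; this gives equality.

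The main obstacle I expect is the identification of $H_{\cle\oplus\clf}(\clk)$ with $H_\cle(\clk)\oplus H_\clf(\clk)$ in a way compatible with the right shifts and the left multipliers. To handle it I would use the coefficient description $f(Z)=\sum Z^\alpha\otimes f_\alpha$ and split the coefficients $f_\alpha\in\cle\oplus\clf$ into their components. Once this is in place, the remaining steps are routine block-matrix bookkeeping.
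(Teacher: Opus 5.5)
Your proposal is correct and is essentially the paper's proof: the paper likewise sets $\clq=\clq_1\oplus\clq_2$, uses the block operator with $X$ in the lower-left corner, lifts it via Theorem~\ref{commutant_lifting theorem} on $H_\cle(\clk)\oplus H_\clf(\clk)$, and applies Theorem~\ref{intertwiner of right shifts} to the $(2,1)$ entry of the lift. Your explicit identification $H_{\cle\oplus\clf}(\clk)\cong H_\cle(\clk)\oplus H_\clf(\clk)$ and your two-sided estimate $\|X\|\le\|M_\Phi\|\le\|M_{\hat\Phi}\|=\|X\|$ only spell out steps the paper leaves tacit.
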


\begin{proof}
Let $\clq=\clq_1\oplus \clq_2$. We define operators $A$, $B_i$ $\in\clb(\clq)$, $i=1,\ldots, d$, as follows: 
\[ A=\begin{bmatrix} 0&0\\
X&0\end{bmatrix}\quad\text{and}\quad B_i=\begin{bmatrix} P_{\clq_1} (\Tilde{M}_{z_i} \otimes I_{\cle})|_{\clq_1}&0\\
0&P_{\clq_2} (\Tilde{M}_{z_i} \otimes I_{\clf})|_{\clq_2}\end{bmatrix}.\]
From the hypothesis, it is clear that 
\[A B_i=B_i A \quad (i=1,\ldots, d).\]
It is easy to see that $(B_1, \ldots, B_d)$ dilates to $(V_1,\ldots
, V_d)$ on $H_\cle(\clk)\oplus H_\clf(\clk)$, where
\[
V_i=\begin{bmatrix} \Tilde{M}_{z_i} \otimes I_{\cle} &0\\
0&\Tilde{M}_{z_i} \otimes I_{\clf}\end{bmatrix}.
\]
By the above theorem, there exists an operator $Y$ on $H_\cle(\clk)\oplus H_\clf(\clk)$ such that 
\begin{align}\label{id_commutant lifting}
A^*=Y^*|_{\clq},  \quad \|A\|=\|Y\|,\end{align}
and 
\begin{align}\label{id_bv}
Y  V_i= V_i Y \quad (i=1, \ldots, d).\end{align}
Let $Y=\begin{bmatrix} Y_{11} &Y_{12}\\
Y_{21}&Y_{22}\end{bmatrix}$. From \eqref{id_bv}, we have $Y_{21}(\Tilde{M}_{z_i} \otimes I_{\cle})= (\Tilde{M}_{z_i} \otimes I_{\clf}) Y_{21} $ for all $i=1, \ldots, d$. Therefore by \ref{intertwiner of right shifts}, there exists a left multiplier $\Phi\in \text{Mult}_L(H_\cle(\clk),H_\clf(\clk))$ such that $Y_{21}=M_\Phi$. Now the identity \eqref{id_commutant lifting} implies that 
\begin{align*}
A=\begin{bmatrix} 0&0\\
X&0\end{bmatrix}=P_\clq Y|_\clq &=  \begin{bmatrix} P_{\clq_1}&0\\
0&P_{\clq_2}\end{bmatrix}\begin{bmatrix} Y_{11} &Y_{12}\\
Y_{21}&Y_{22}\end{bmatrix}  \begin{bmatrix} P_{\clq_1}&0\\
0&P_{\clq_2}\end{bmatrix}= \begin{bmatrix} P_{\clq_1}Y_{11} P_{\clq_1} & P_{\clq_1}Y_{12}P_{\clq_1}\\
P_{\clq_1}Y_{21}P_{\clq_1}&P_{\clq_1}Y_{22}P_{\clq_1}\end{bmatrix}.
\end{align*}
This shows that $X=P_{\clq_2} M_\Phi|_{\clq_1}$ and $\|M_\Phi\|=\|X\|$. This completes the proof. 
\end{proof} 

\subsection{An application of the commutant lifting theorem to the noncommutative Nevanlinna-Pick interpolation:} We consider the following nc Nevanlinna-Pick interpolation problem.
Given $k$ distinct points $Z_1, \ldots, Z_k\in \fB_d$, and $W_1,\ldots, W_k\in \mathbb{M}^1$, we ask the following question: when does there exist an nc bounded free holomorphic function $\Phi\in H^\infty(\fB_d)$ such that 
\[\Phi(Z_i)=W_i \quad (i=1,\ldots, k) \quad \text{and }\quad \|\Phi\|_\infty\leq 1?\]
Since, being an nc set $\fB_d$ is closed under taking direct sum, by letting $Z_0=\oplus_{i=1}^k Z_i$ and $W_0=\oplus_{i=1}^k W_i$, the above $k$ points Nevanlinna-Pick problem turns into the following one point nc Nevanlinna-Pick problem on $\fB_d$: given point $Z_0\in \fB_d$, and $W_0\in \mathbb{M}^1$, when does there exist an nc holomorphic function $\Phi\in H^\infty(\fB_d)$ such that 
\[\Phi(Z_0)=W_0 \quad \text{and }\quad \|\Phi\|_\infty\leq 1?\]
Using the property that nc functions respect direct sums, solving the above one-point interpolation problem is equivalent to solving the preceding multi-point interpolation problem.

In the following theorem, we solve the above one-point interpolation problem by providing a necessary and sufficient condition, using the commutant lifting theorem established in the previous subsection. One may also consider a vector-valued version of the interpolation problem and solve it using the intertwining lifting result established in Corollary \ref{intertwining_lifting}. For the convenience of the reader, we treat the latter case separately in Theorem \ref{vector-valued Pick}.
\begin{theorem}\label{Scaler nc pick theorem}
Let $Z_0\in \fB_d(n)$ and $W_0\in M_n(\C)$. Then there exists $\Phi\in H_1^\infty(\fB_d)$ such that $\Phi(Z_0)=W_0$ if and only if
\[ \Tilde{K}(Z_0, Z_0)(\cdot):=\clk(Z_0, Z_0)(\cdot)-W_0 \clk(Z_0, Z_0)(\cdot) W_0^*\]
is a complete positive map on $M_n(\C)$, where $\clk$ is the nc Drury-Arveson kernel on $\fB_d$.
\end{theorem}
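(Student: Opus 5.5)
The plan follows Sarason's method. Let $\clq:=\clh_{\{Z_0\}}(\clk)=\overline{\text{span}}\{\clk_{Z_0,v,y}: v,y\in\C^n\}$, which is finite dimensional (at most $n^2$). By Proposition \ref{action_adj_right_shift}, $\Tilde{M}_{z_i}^*\clk_{Z_0,v,y}=\clk_{Z_0,(Z_0)_iv,y}$, so $\clq$ is right shift co-invariant. Necessity is the easy direction. If $\Phi\in H_1^\infty(\fB_d)$ satisfies $\Phi(Z_0)=W_0$, then $\|M_\Phi\|\le 1$ (using $\text{Mult}_L(\clh^2_d)\cong H^\infty(\fB_d)$ isometrically). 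Proposition \ref{action_adj_mul} gives $M_\Phi^*\clk_{Z_0,v,y}=\clk_{Z_0,v,W_0^*y}$. Now apply $I-M_\Phi M_\Phi^*\ge 0$ to finite sums $\sum_j\clk_{Z_0,v_j,y_j}$ and use the reproducing property $\langle \clk_{Z_0,v,y},\clk_{Z_0,u,x}\rangle=\langle \clk(Z_0,Z_0)(uv^*)y,x\rangle$. This yields
\[
\sum_{j,l}\big\langle \big[\clk(Z_0,Z_0)(v_lv_j^*)-W_0\clk(Z_0,Z_0)(v_lv_j^*)W_0^*\big]y_j,y_l\big\rangle\ge 0 .
\]
Since positive elements of $M_N(M_n(\C))$ are sums of matrices $[v_lv_j^*]_{l,j}$, this family of inequalities is exactly complete positivity of $\Tilde K(Z_0,Z_0)$ (Choi-type criterion).

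For sufficiency, define $X^*$ on $\clq$ by $X^*\clk_{Z_0,v,y}:=\clk_{Z_0,v,W_0^*y}$, extended linearly. The key points are well-definedness and contractivity. Reading the computation above backwards, complete positivity of $\Tilde K(Z_0,Z_0)$ gives $\|\sum_j\clk_{Z_0,v_j,W_0^*y_j}\|^2\le\|\sum_j\clk_{Z_0,v_j,y_j}\|^2$. This shows $X^*$ is well defined (a vanishing sum is sent to $0$) and is a contraction on $\clq$. Lemma \ref{sum of nc kernel at points} can be used to rewrite finite sums as single kernel functions at $Z_0^{\oplus N}$ if convenient. Next, verify the commutation $X^*(P_\clq\Tilde M_{z_i}|_\clq)^*=(P_\clq\Tilde M_{z_i}|_\clq)^*X^*$. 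Since $\clq$ is co-invariant, $(P_\clq\Tilde M_{z_i}|_\clq)^*=\Tilde M_{z_i}^*|_\clq$. Both sides send $\clk_{Z_0,v,y}$ to $\clk_{Z_0,(Z_0)_iv,W_0^*y}$ by Proposition \ref{action_adj_right_shift}, so they agree. Taking adjoints, $X$ commutes with the compressed right shifts.

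Theorem \ref{commutant_lifting theorem} (with $\cle=\C$) then provides $\Phi\in\text{Mult}_L(\clh^2_d)$ with $\|M_\Phi\|=\|X\|\le1$ and $X=P_\clq M_\Phi|_\clq$. From the proof of that theorem we in fact have $M_\Phi^*|_\clq=X^*$, since $\clq$ is $M_\Phi^*$-invariant. Hence $\clk_{Z_0,v,\Phi(Z_0)^*y}=\clk_{Z_0,v,W_0^*y}$ for all $v,y$, and pairing with $g_1\equiv 1$ (or with suitable functions) gives $\langle y,\Phi(Z_0)v\rangle=\langle y,W_0v\rangle$, i.e.\ $\Phi(Z_0)=W_0$. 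Finally, the identification $\text{Mult}_L(\clh^2_d)\cong H^\infty(\fB_d)$ with $\|\Phi\|_\infty=\|M_\Phi\|\le 1$ puts $\Phi$ in $H_1^\infty(\fB_d)$.

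I expect the main obstacle to be the translation between complete positivity of $\Tilde K(Z_0,Z_0)$ and the contractivity of $X^*$ on arbitrary finite sums of kernel functions. This requires the Choi-type fact that complete positivity of a map on $M_n(\C)$ is equivalent to nonnegativity of $\sum_{j,l}\langle \Psi(v_lv_j^*)y_j,y_l\rangle$ for all finite families. A related subtlety is that the extraction of $\Phi(Z_0)=W_0$ needs $M_\Phi^*|_\clq=X^*$ exactly, not just $X=P_\clq M_\Phi|_\clq$; the latter is enough anyway, because $\clq$ is $M_\Phi^*$-invariant by Proposition \ref{action_adj_mul}.
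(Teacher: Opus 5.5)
Your proposal is correct and follows the paper's proof essentially step for step: the same Sarason-type definition of $X^*$ on kernel functions at $Z_0$, contractivity from complete positivity of $\Tilde{K}(Z_0,Z_0)$, commutation with the compressed right shifts via Proposition \ref{action_adj_right_shift}, an application of Theorem \ref{commutant_lifting theorem}, and pairing with the constant function to get $\Phi(Z_0)=W_0$. Your version is slightly more careful than the paper's in three places: you work directly with $\clh_{\{Z_0\}}(\clk)$, which equals the nc-envelope space by Lemma \ref{sum of nc kernel at points}; you check that $X^*$ is well defined; and you write out the necessity direction via $I-M_\Phi M_\Phi^*\ge 0$ and the Choi-type criterion, which the paper only asserts.
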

\begin{proof} First, we prove the sufficient part. 
 Let $\Omega=\{Z_0\}$. Suppose $[\Omega]_{nc}$ is the nc envelop of $\Omega$. We consider 
\[\clq:=\clh_{[\Omega]_{nc}}(\clk)=\overline{\text{span}}\,\{ \clk_{W, v, y}: W\in [\Omega]_{nc}(n), v, y\in \C^n, n\in \mathbb{N}\}\subseteq \clh^2_d.\]
Define an operator $X$ on $\clq$, through its adjoint, by 
\[
X^*\clk_{\oplus^k Z_0, \oplus_{i=1}^k v_i, \oplus_{i=1}^k y_i}=\sum_{i=1}^k\clk_{Z_0, v_i, W^*_0 y_i},
\]
where $\oplus^k Z_0$ is the direct sum of $k$ copies of $Z_0$ and for  $i=1, \ldots, k$,   $v_i, y_i\in \C^n$.
First, we show that $X$ is a contraction on $\clh_{[\Omega]_{nc}}(\clk)$. To this end, for each natural number $k$ and 
for $v_i, y_i\in \C^n$ ($i=1, \ldots, k$), we compute
\begingroup
 \allowdisplaybreaks
\begin{align*}
&\Big\langle (I_{\clq}-X X^*) \clk_{\oplus^k Z_0, \oplus_{i=1}^k v_i, \oplus_{i=1}^k y_i},  \clk_{\oplus^k Z_0, \oplus_{i=1}^k v_i, \oplus_{i=1}^k y_i}\Big\rangle\\
=& \Big\langle\clk_{\oplus^k Z_0, \oplus_{i=1}^k v_i, \oplus_{i=1}^k y_i},  \clk_{\oplus^k Z_0, \oplus_{i=1}^k v_i, \oplus_{i=1}^k y_i}\Big\rangle- \Big\langle X^*\clk_{\oplus^k Z_0, \oplus_{i=1}^k v_i, \oplus_{i=1}^k y_i},   X^* \clk_{\oplus^k Z_0, \oplus_{i=1}^k v_i, \oplus_{i=1}^k y_i}\Big\rangle\\
=& \Big\langle\sum_{i=1}^k\clk_{ Z_0, v_i, y_i}, \sum_{i=1}^k \clk_{ Z_0, v_i, y_i}\Big\rangle- \Big\langle \sum_{i=1}^k\clk_{Z_0, v_i, W^*_0 y_i},   \sum_{i=1}^k\clk_{Z_0, v_i, W^*_0 y_i}\Big\rangle\quad(\text{by Lemma \ref{sum of nc kernel at points}})\\
=& \Big\langle \big[\clk(Z_0, Z_0)(v_i v^*_j)\big]_{k\times k}  \big(\oplus_{i=1}^k y_i\big),  \oplus_{i=1}^k y_i\Big\rangle-  \Big\langle \big[W_0\clk(Z_0, Z_0)(v_i v^*_j)W^*_0\big]_{k\times k}  \big(\oplus_{i=1}^k y_i\big),  \oplus_{i=1}^k y_i\Big\rangle\\
=& \Big\langle\Big(\big[\clk(Z_0, Z_0)(v_i v^*_j)\big]_{k\times k} -\big[W_0\clk(Z_0, Z_0)(v_i v^*_j)W^*_0\big]_{k\times k} \Big ) \big(\oplus_{i=1}^k y_i\big),  \oplus_{i=1}^k y_i\Big\rangle\succeq0.
\end{align*}
\endgroup
The last positivity follows from the fact that $\Tilde{K}(Z_0, Z_0)(\cdot)$ is completely positive. This shows that $I_{\clq}-X X^*\geq 0$. In other words, $X$ is a contraction. 

Now, let $Z_0=(Z_{(0,1)} ,\ldots,Z_{(0,d)})$. Since, by Proposition \ref{action_adj_right_shift}, $\Tilde{M}^*_{z_i} \clk_{Z_0, v, y}=\clk_{Z_0, Z_{(0,i)} v, y}$ (for all $i=1,\ldots,d$), $\clq$ is right shift co-invariant. For $v, y\in \C^n$,
\begin{align*}
\Tilde{M}^*_{z_i}  X^*\clk_{Z_0, v, y}=\Tilde{M}^*_{z_i} \clk_{Z_0, v, W^*_0 y}= \clk_{Z_0, Z_{(0,i)} v, W^*_0 y}=X^*\clk_{Z_0, Z_{(0,i)} v,  y}=X^* \Tilde{M}^*_{z_i}\clk_{Z_0, v, y}.
\end{align*}
Therefore, $\Tilde{M}^*_{z_i}  X^*=X^*\Tilde{M}^*_{z_i}$  and thus
\[X (P_{\clq}\Tilde{M}_{z_i}|_{\clq})=(P_{\clq}\Tilde{M}_{z_i}|_{\clq}) X, \]
for all $i=1,\ldots, d$. By Theorem \ref{commutant_lifting theorem}, there exists a left multipliers $\Phi\in \text{Mult}_L(\clh^2_d)\cong H^{\infty}(\fB_d)$ such that
\[\|M_\Phi\|=\|X\| \quad \text{ and }\quad X=P_{\clq} M_\Phi|_{\clq}.\]
Let $f\in \clh^2_d$ be defined by $f(Z)=I_n$ for $Z\in \fB_d(n)$, for all $n$. Then,
 \begin{align*}
 \la f,X^*\clk_{Z_0, v, y} \ra=\la f, \clk_{Z_0, v, W^*_0 y}  \ra=\la I_n v, W^*_0 y\ra=\la  v, W^*_0 y\ra \quad (v, y\in \C^n).
 \end{align*}
On the other hand,
 \begin{align*}
 \la f,M_\Phi^*\clk_{Z_0, v, y} \ra=\la f, \clk_{Z_0, v, \Phi(Z_0)^* y}  \ra=\la I_n v, \Phi(Z_0)^* y\ra=\la  v, \Phi(Z_0)^* y\ra\quad (v,y \in \C^n).
 \end{align*}
This shows that 
\[\Phi(Z_0)=W_0.\]
Proof of the necessary part follows from the fact that $\Phi\in H_1^\infty(\fB_d)$ and $\Phi(Z_0)=W_0$.
\end{proof}

The following provides an example of $Z_0$ and $W_0$ satisfying the hypotheses of the above Theorem, and we explicitly construct the corresponding interpolant.
\begin{example}
We fix $d=1$ and consider
\[\fB_1=\{X\in M_n(\C): \|X\|<1: n\geq 1 \}.\]
We take 
\[Z_0=\begin{bmatrix} 0 & 1 \\ 0 & 0 \end{bmatrix}\in \fB_1 \quad \text{and}\quad   W_0 = \lambda(I+Z_0) = \begin{bmatrix} \lambda & \lambda \\ 0 & \lambda \end{bmatrix},\]
where $\lambda=\frac{2}{1+\sqrt{5}}$. Note that $\frac{1}{\lambda}=\frac{1+\sqrt{5}}{2}$ is a root of the equation $x^2-x-1=0$. This implies that 
\begin{align}\label{gold_id}
1-\lambda^2=\lambda.
\end{align}
\textbf{Claim:} The map $\Theta: M_2(\C) \to M_2(\C)$ defined by 
\[
\Theta(\cdot):=\clk(Z_0, Z_0)(\cdot)-W_0 \clk(Z_0, Z_0)(\cdot) W_0^* 
\]
is a \textit{cp} map on $M_2(\C)$.

Since, $Z^n_0= \begin{bmatrix}0&0\\0 &0\end{bmatrix}$ for all $n\geq 2$, for $P\in M_2(\C)$, we obtain
\allowdisplaybreaks
\begingroup
\begin{align*}
\Theta(P)& =P+ Z_0P Z^*_0- W_0 P W^*_0 -W_0 Z_0 P Z^*_0 W^*_0 \\
&= P + Z_0 P Z_0^* - W_0 P W_0^* - \lambda^2 Z_0 P Z_0^* \quad(\text{using the fact } W_0 Z_0=\lambda Z_0)\\
 & = P + (1-\lambda^2)\, Z_0 P Z_0^* - W_0PW_0^*\\
  &= P + \lambda Z_0 P Z_0^* - W_0 P W_0^* \hspace{5.8cm}(\text{by }(\ref{gold_id}))\\
  &=  P + \lambda Z_0 P Z_0^* - \lambda^2(I+Z_0) P (I+Z_0^*)\\
  &= P + \lambda Z_0 P Z_0^* - \lambda^2(P+Z_0^*+Z_0P+Z_0PZ_0^*)\\
  &= \lambda P + \lambda Z_0 P Z_0^* -\lambda^2(Z_0^*+Z_0P+Z_0PZ_0^*)\hspace{2.8cm}(\text{by }(\ref{gold_id}))\\
  &= \lambda (P +  Z_0 P Z_0^* -\lambda Z_0^*-\lambda Z_0P-\lambda Z_0PZ_0^*)\\
  &= \lambda \big((I-\lambda Z_0)P +  (1-\lambda) Z_0 P Z_0^* -\lambda Z_0^*\big)\\
  &=\lambda \big((I-\lambda Z_0)P +  \lambda^2 Z_0 P Z_0^* -\lambda Z_0^*\big)\hspace{4cm}(\text{by }(\ref{gold_id}))\\
&=\lambda \big((I-\lambda Z_0)P -  (I-\lambda Z_0)P\lambda Z_0^*\big)\\
&=\lambda (I-\lambda Z_0)P(I-\lambda Z_0^*)\\
&=APA^*,
  \end{align*}
  \endgroup
where $A=\sqrt{\lambda}(I-\lambda Z_0)\in M_2(\C)$. The above representation reveals that $\Theta$ is a \textit{cp} map. This proves the claim. Therefore, by Theorem \ref{Scaler nc pick theorem}, there exist $\Phi\in H^\infty_1(\fB_1)$ such that $\Phi(Z_0)=W_0$. To find such $\Phi$, we write
\allowdisplaybreaks
\begingroup
\begin{align*}
W_0=\lambda I +\lambda Z_0 & = \lambda I+(1-\lambda^2)Z_0\hspace{4cm}(\text{by }(\ref{gold_id}))\\
& = Z_0 +\lambda I -\lambda^2 Z_0\\
& = Z_0 +\lambda I -\lambda^2 Z_0-\lambda Z_0^2\hspace{2cm}(\text{since }Z_0^2=0)\\
& =(Z_0 +\lambda I)- \lambda Z_0(Z_0+\lambda I)\\
&= (Z_0 +\lambda I)(I-\lambda Z_0)\\
& =(Z_0+\lambda I)(I+\lambda Z_0)^{-1}\hspace{2cm} (\text{since }Z_0^n=0, n\geq 2).
\end{align*}
\endgroup
So, we can take 
\[
\Phi(z)=\frac{z+\lambda}{1+ \lambda z} \quad (\lambda\in \D).
\]
It is immediate that, $\Phi\in H^\infty_1(\fB_1)$ and $\Phi(Z_0)=W_0$.
\end{example}

Using Theorem \ref{Scaler nc pick theorem}, first we provide an example of $Z_0$ and $W_0$ for which there is no bounded nc holomorphic function $\Phi$ satisfying \[\Phi(Z_0)=W_0\quad\text{and}\quad\|\Phi\|_\infty\leq 1.\] 
\begin{example}
We fix $d=1$, consider $\fB_1$ and take
\[Z_0=\begin{bmatrix}0&1\\0&0\end{bmatrix} \quad \text{and} \quad W_0=\begin{bmatrix}1&0\\0&0\end{bmatrix}.\]
First we observe that $$Z^n_0= \begin{bmatrix}0&0\\0 &0\end{bmatrix}\quad (\forall\, n\geq 2).$$
Now, for $P= \begin{bmatrix}a&b\\c &d\end{bmatrix}\in M_2(\C)$,
\allowdisplaybreaks
\begingroup
\begin{align*}&\clk(Z_0, Z_0)(P)-W_0\clk(Z_0, Z_0)(P) W_0^* \\
\vspace{0.2cm}&=P+ Z_0P Z^*_0- W_0 P W^*_0 -W_0 Z_0 P Z^*_0 W^*_0 \\
\vspace{0.2cm}&= \begin{bmatrix}a&b\\c &d\end{bmatrix} +\begin{bmatrix}0&1\\0 &0\end{bmatrix} \begin{bmatrix}a&b\\c &d\end{bmatrix} \begin{bmatrix}0 &0\\1 &0\end{bmatrix}-\begin{bmatrix}1&0\\0 &0\end{bmatrix}\begin{bmatrix}a&b\\c &d\end{bmatrix} \begin{bmatrix}1&0\\0 &0\end{bmatrix}\\&\hspace{6.1cm}- \begin{bmatrix}1&0\\0 &0\end{bmatrix} \begin{bmatrix}0&1\\0 &0\end{bmatrix}\begin{bmatrix}a&b\\c &d\end{bmatrix} \begin{bmatrix}0&0\\1 &0\end{bmatrix}\begin{bmatrix}1&0\\0 &0\end{bmatrix}\\
&=\begin{bmatrix}0&b\\c &d\end{bmatrix}.
\end{align*}
\endgroup
Let $P$ be a positive matrix. Therefore, $b=\Bar{c}$. The above computation shows that, for any positive semi-definite matrix $P=\begin{bmatrix}a&b\\c &d\end{bmatrix}$, since $$\det \begin{bmatrix}0&\bar c\\c &d\end{bmatrix}=-|c|^2,$$  $\clk(Z_0, Z_0)(P)-W_0\clk(Z_0, Z_0)(P) W_0^*$ is always negative definite. Hence, by Theorem \ref{Scaler nc pick theorem}, there is no bounded free holomorphic function $\Phi$ on $\fB_1$ such that $\Phi(Z_0)=W_0$ and $\|\Phi\|_\infty\leq 1$.
\end{example}

In the following, we prove a vector-valued version of the nc Nevanlinna-Pick interpolation theorem.
\begin{theorem}\label{vector-valued Pick}
 Let $\cle$ and $\clf$ be two Hilbert spaces. Suppose $Z_0\in \fB_d$ and $W_0\in \mathbb{M}^1(\clb(\cle, \clf))$. Then there exists a left multiplier $\Phi\in \text{Mult}_L(H_\cle(\clk), H_\clf(\clk))$ such that $\Phi(Z_0)=W_0$ and $\|M_\Phi\|\leq 1$ if and only if
\[ \clk(Z_0, Z_0)(\cdot)\otimes I_\clf -W_0 \big(\clk(Z_0, Z_0)(\cdot)\otimes I_{\cle}\big) W_0^*\]
is a completely positive map from $M_n(\C)$ to $M_n(\clb(\clf))$.
\end{theorem}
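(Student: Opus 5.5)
The proof mirrors that of Theorem \ref{Scaler nc pick theorem}, with Corollary \ref{intertwining_lifting} taking the place of Theorem \ref{commutant_lifting theorem}.

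\emph{Sufficiency.} Set $\Omega=\{Z_0\}$ with $Z_0\in\fB_d(n)$. Put $\clq_1:=\clh_{[\Omega]_{nc}}(\clk\otimes I_\cle)\subseteq H_\cle(\clk)$ and $\clq_2:=\clh_{[\Omega]_{nc}}(\clk\otimes I_\clf)\subseteq H_\clf(\clk)$. By Lemma \ref{RKHS_full nc} and Lemma \ref{sum of nc kernel at points}, $\clq_2$ is the closed span of the kernel functions $\sum_{i}\clk_{Z_0,v_i,y_i}\otimes\zeta_i$. More intrinsically, it is spanned by $\Tilde{\clk}_{Z_0,v,\xi}$ with $v\in\C^n$ and $\xi\in\C^n\otimes\clf$, and by $\Tilde{\clk}_{\oplus^k Z_0,\oplus v_i,\oplus \xi_i}$ for amplifications.

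Define $X\in\clb(\clq_1,\clq_2)$ through its adjoint:
\[
X^*\Tilde{\clk}_{\oplus^k Z_0,\oplus_i v_i,\oplus_i\xi_i}=\sum_{i=1}^k\Tilde{\clk}_{Z_0,v_i,W_0^*\xi_i},
\]
where $\xi_i\in\C^n\otimes\clf$ and $W_0^*\xi_i\in\C^n\otimes\cle$. Then compute $\|(\cdot)\|^2-\|X^*(\cdot)\|^2$ on a finite combination, as in the scalar proof, using the inner-product formula for amplified kernels. It equals
\[
\Big\langle\Big(\big[\clk(Z_0,Z_0)(v_iv_j^*)\otimes I_\clf\big]-\big[W_0(\clk(Z_0,Z_0)(v_iv_j^*)\otimes I_\cle)W_0^*\big]\Big)\oplus_i\xi_i,\oplus_i\xi_i\Big\rangle .
\]
The matrix $[v_iv_j^*]$ is positive in $M_k(M_n(\C))$, so this is nonnegative by complete positivity of the given map. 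Hence $X^*$ is well defined: a combination of zero norm is sent to zero norm, and $X^*$ extends to a contraction.

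Right-shift co-invariance of $\clq_1$ and $\clq_2$ follows from Proposition \ref{action_adj_right_shift}, since $(\Tilde M_{z_i}\otimes I)^*\Tilde{\clk}_{Z_0,v,\xi}=\Tilde{\clk}_{Z_0,Z_{(0,i)}v,\xi}$. The same formula shows that $X^*$ intertwines the compressed adjoint right shifts, because $W_0^*$ acts only on the $\xi$ slot while $Z_{(0,i)}$ acts only on the $v$ slot. Taking adjoints gives the hypothesis of Corollary \ref{intertwining_lifting}. The corollary then produces $\Phi\in\text{Mult}_L(H_\cle(\clk),H_\clf(\clk))$ with $\|M_\Phi\|=\|X\|\le1$ and $X=P_{\clq_2}M_\Phi|_{\clq_1}$.

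To read off $\Phi(Z_0)=W_0$, pair with $g_\eta\in H_\cle(\clk)$. Its projection onto $\clq_1$ has the same inner products against kernels at $Z_0$, so one can pair directly. This gives
\[
\langle g_\eta, M_\Phi^*\Tilde{\clk}_{Z_0,v,\xi}\rangle=\langle \Phi(Z_0)(v\otimes\eta),\xi\rangle
\]
by Proposition \ref{action_adj_mul}, and likewise $\langle g_\eta,X^*\Tilde{\clk}_{Z_0,v,\xi}\rangle=\langle v\otimes\eta,W_0^*\xi\rangle$. Since $M_\Phi^*$ and $X^*$ agree on $\clq_2$, we get $\Phi(Z_0)=W_0$.

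\emph{Necessity.} If $\|M_\Phi\|\le1$, then $I-M_\Phi M_\Phi^*\ge0$. Evaluate this on vectors $\sum_i\Tilde{\clk}_{Z_0,v_i,\xi_i}$, using Proposition \ref{action_adj_mul} and $\Phi(Z_0)=W_0$. The result is positivity of the block matrix above for every choice of $v_i$ and $\xi_i$. A general positive element of $M_k(M_n(\C))$ is a sum of matrices of the form $[v_iv_j^*]$, so this yields complete positivity.

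The main obstacle is the well-definedness and boundedness of $X^*$ in the sufficiency part. Elements of $\clq_2$ are combinations of kernels at direct sums of copies of $Z_0$ with arbitrary vectors $\xi_i\in\C^n\otimes\clf$, not only elementary tensors. The positivity computation therefore has to be carried out in that generality, with the identification $(\C^n\otimes\clf)^k\cong\C^{nk}\otimes\clf$ handled carefully.
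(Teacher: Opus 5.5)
Your proposal is correct and follows the paper's proof essentially step for step. It uses the same subspaces $\clq_1,\clq_2$, the same operator $X$ defined through $X^*$ with the same positivity computation, Corollary \ref{intertwining_lifting} for the lifting, and pairing against the constant function $v\mapsto v\otimes\eta$ (your $g_\eta$, the paper's $f_\zeta$) to read off $\Phi(Z_0)=W_0$. Where you differ, you are more careful than the paper: you treat general $\xi_i\in\C^n\otimes\clf$ rather than elementary tensors, note that the norm inequality makes $X^*$ well defined, justify why pairing against $X^*$ and against $M_\Phi^*$ agree, and spell out the necessity direction by decomposing positive elements of $M_k(M_n(\C))$ as sums of $[v_iv_j^*]$, which the paper leaves as a one-line remark.
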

\begin{proof}  Similar to the proof of Theorem \ref{Scaler nc pick theorem}, consider
\[\clq_1:=\clh_{[\Omega]_{nc}}(\clk\otimes I_{\cle})=\overline{\text{span}}\,\{ \clk_{W, v, y}\otimes \xi: W\in [\Omega]_{nc}(n), v, y\in \C^n, n\in \mathbb{N},\zeta\in\cle\}\subseteq H_\cle(\clk),\]
and 
\[\clq_2:=\clh_{[\Omega]_{nc}}(\clk\otimes I_{\clf})=\overline{\text{span}}\,\{ \clk_{W, v, y}\otimes \zeta: W\in [\Omega]_{nc}(n), v, y\in \C^n, n\in \mathbb{N},\xi\in\clf\}\subseteq H_\clf(\clk).\]
Denote $\Tilde{\clk}:=\clk\otimes I_{\cle}$. Define an operator $X:\clq_1\to\clq_2$, through its adjoint, by 
\[
X^*\Tilde{\clk}_{\oplus^k Z_0, \oplus_{i=1}^k v_i, \oplus_{i=1}^k y_i\otimes\xi_i}=\sum_{i=1}^k\Tilde{\clk}_{Z_0, v_i, W^*_0 (y_i\otimes\xi_i)},
\]
where $v_i, y_i\in \C^n$ and $\xi_i\in\clf$, for all $i=1, \ldots, k$. A similar computation as in Theorem \ref{Scaler nc pick theorem}, with the help of the identification $\Tilde{\clk}_{Z,v,y\otimes\xi}=\clk_{Z,v,y}\otimes\xi$, shows that
\begingroup
 \allowdisplaybreaks
\begin{align*}
&\Big\langle (I_{\clq_2}-X X^*) \Tilde{\clk}_{\oplus^k Z_0, \oplus_{i=1}^k v_i, \oplus_{i=1}^k y_i\otimes \xi_i},  \Tilde{\clk}_{\oplus^k Z_0, \oplus_{i=1}^k v_i, \oplus_{i=1}^k y_i\otimes\xi_i}\Big\rangle\\
=& \Big\langle \big[\clk(Z_0, Z_0)(v_i v^*_j)\otimes I_\clf\big]_{k\times k}  \big(\oplus_{i=1}^k y_i\otimes\xi_i\big),  \oplus_{i=1}^k y_i\otimes\xi_i\Big\rangle
\\&\hspace{3cm}- \Big\langle \big[W_0\big(\clk(Z_0, Z_0)(v_i v^*_j)\otimes I_{\cle}\big)W^*_0\big]_{k\times k}  \big(\oplus_{i=1}^k y_i\otimes\xi_i\big),  \oplus_{i=1}^k y_i\otimes\xi_i\Big\rangle\\
=& \Big\langle\big[\tilde{\clk}(Z_0, Z_0)(v_i v^*_j)\big]_{k\times k}  \big(\oplus_{i=1}^k y_i\otimes\xi_i\big),  \oplus_{i=1}^k y_i\otimes\xi_i\Big\rangle,
\end{align*}
\endgroup
where
\[
\Tilde{\clk}(Z_0,Z_0)(\cdot):=\clk(Z_0, Z_0)(\cdot)\otimes I_\clf -W_0 \big(\clk(Z_0, Z_0)(\cdot)\otimes I_{\cle}\big) W_0^*.
\]
Since, $\Tilde{\clk}(Z_0,Z_0)(\cdot)$ is a completely positive linear map from $M_n(\C)$ to $M_n(\clb(\clf))$, $I_{\clq_2}-X X^*\geq 0$, that is, $\|X\|\leq 1$. 

Let $Z_0=(Z_{(0,1)} ,\ldots,Z_{(0,d)})$. Then
\begin{align*}
\big(\Tilde{M}_{z_i}\otimes I_{\cle}\big)^*  X^*\Tilde{\clk}_{Z_0, v, y\otimes\xi}&=\Tilde{M}^*_{z_i} \Tilde{\clk}_{Z_0, v, W^*_0 (y\otimes\xi)}= \Tilde{\clk}_{Z_0, Z_{(0,i)} v, W^*_0 (y\otimes\xi)}\\&=X^*\Tilde{\clk}_{Z_0, Z_{(0,i)} v, (y\otimes\xi)}=X^* \big(\Tilde{M}_{z_i}\otimes I_{\clf}\big)^* \Tilde{\clk}_{Z_0, v, y\otimes\xi},
\end{align*}
and thus
\[X \big(P_{\clq_1}(\Tilde{M}_{z_i}\otimes I_\cle)|_{\clq_1}\big)=\big(P_{\clq_2}(\Tilde{M}_{z_i}\otimes I_\clf)|_{\clq_2}\big) X, \]
for all $i=1,\ldots, d$. By Corollary \ref{intertwining_lifting}, there exists $\Phi\in \text{Mult}_L(H_\cle(\clk), H_\clf(\clk))$ such that $\|M_\Phi\|=\|X\|\leq 1$ and $X=P_{\clq_2} M_\Phi|_{\clq_1}$. 

Fix $\zeta\in\cle$ and consider $f_{\zeta}\in H_\cle(\clk)$ defined by $f_{\zeta}(Z)v=v\otimes\zeta$ for all $Z\in \fB_d(n)$, for all $n$. Then,
 \begin{align*}
 \la f_\zeta,X^*\Tilde{\clk}_{Z_0, v, y\otimes \xi} \ra=\la f_\zeta, \Tilde{\clk}_{Z_0, v, W^*_0 (y\otimes\xi)}  \ra=\la f_{\zeta}(Z_0) v, W^*_0 (y\otimes\xi)\ra=\la  v\otimes \zeta, W^*_0 (y\otimes\xi)\ra,
 \end{align*}
for all $v, y\in \C^n$ and $\xi\in\cle$. On the other hand,
 \begin{align*}
 \la f_\zeta,M_{\Phi}^*\Tilde{\clk}_{Z_0, v, y\otimes \xi} \ra=\la f_\zeta, \Tilde{\clk}_{Z_0, v, \Phi(Z_0)^* (y\otimes\xi)}  \ra=\la f_{\zeta}(Z_0) v, \Phi(Z_0)^* (y\otimes\xi)\ra=\la  v\otimes \zeta, \Phi(Z_0)^* (y\otimes\xi)\ra,
 \end{align*}
 for all $v, y\in \C^n$ and $\xi\in\cle$. Now, varying $\zeta$ on $\cle$, we can conclude
\[\Phi(Z_0)=W_0.\]
Proof of the necessary part follows from the fact that $\Phi\in \text{Mult}_L(H_\cle(\clk), H_\clf(\clk))$, $\|M_\Phi\|\leq 1$, and $\Phi(Z_0)=W_0$.
\end{proof}

\subsection{Recovery of classical Nevanlinna-Pick theorem from Theorem \ref{Scaler nc pick theorem}.}\label{subsection_recovery NP theorem}
Now we show that the necessary and sufficient condition of the classical Nevanlinna-Pick theorem can be recovered from Theorem \ref{Scaler nc pick theorem}. Recall that given $k$ distinct points $z_1, \ldots, z_k$ in $\D$ and $w_1, \ldots, w_k$ in $\C$, the classical Nevanlinna-Pick interpolation problem asks the following: when does there exist a holomorphic function $\phi$ on $\D$ such that 
\[\phi(z_i)=w_i\quad (i=1,\ldots, k) \quad\text{and}\quad \|\phi\|_{\infty}=\sup_{z\in \D}|\phi(z)|\leq 1?\]
In particular, we take $d=1$ and consider
\[\fB_1=\{X\in M_n(\C): \|X\|<1: n\geq 1 \}.\]
It is clear that $\fB_1(1)=\D$. Consider
\[
Z_0=\begin{bmatrix}
z_1 & 0 & \cdots & 0 \\
0 & z_2 & \cdots & 0 \\
\vdots & \vdots & \ddots & \vdots \\
0 & 0 & \cdots & z_k
\end{bmatrix} \quad \text{and}\quad W_0=\begin{bmatrix}
w_1 & 0 & \cdots & 0 \\
0 & w_2 & \cdots & 0 \\
\vdots & \vdots & \ddots & \vdots \\
0 & 0 & \cdots & w_k
\end{bmatrix}.
\]
Now, for $P=[p_{ij}]_{k\times k}\in M_k$,
\allowdisplaybreaks
\begingroup
\begin{align*}
\Tilde{K}(Z_0, Z_0)(P):&=\clk(Z_0, Z_0)(P)-W_0 \clk(Z_0, Z_0)(P) W_0^*\\
&=\big[\clk(z_i ,z_j) p_{ij}\big]_{k\times k}- \big[w_i\clk(z_i ,z_j)\Bar{w}_j p_{ij}\big]_{k\times k}\\
&=\begin{bmatrix}\frac{1-w_i\Bar{w}_j}{1-z_i\Bar{z}_j}p_{ij}\end{bmatrix}_{k\times k}\\
&=\begin{bmatrix}\frac{1-w_i\Bar{w}_j}{1-z_i\Bar{z}_j}\end{bmatrix}_{k\times k} \odot P,
\end{align*}
\endgroup
where $``\odot"$ denotes the Schur product of two matrices.
Also, for each positive integer $m$ and block matrices 
\[
\mathbf{P}:=\begin{bmatrix}
P_{11} & P_{12} & \cdots & P_{1m} \\
P_{21} & P_{22} & \cdots & P_{2m}\\
\vdots & \vdots & \ddots & \vdots \\
P_{m1} & P_{m2} & \cdots & P_{mm}
\end{bmatrix}_{km\times km} \quad \text{and}\quad \mathbf{W}:=\begin{bmatrix}
W_0 & 0 & \cdots & 0 \\
0 & W_0 & \cdots & 0 \\
\vdots & \vdots & \ddots & \vdots \\
0 & 0 & \cdots & W_0
\end{bmatrix}_{km\times km},
\]
where $P_{ij}\in M_k$ for all $i,j=1\ldots,m$, we calculate
\allowdisplaybreaks
\begingroup
\begin{align*}
\Tilde{K}(\oplus^m Z_0, \oplus^m Z_0)(\mathbf{P}):&=\clk(\oplus^m Z_0, \oplus^m Z_0)(\mathbf{P})-\mathbf{W} \clk(\oplus^m Z_0, \oplus^m Z_0)(\mathbf{P}) \mathbf{W}^*\\
&= \big[\Tilde{K}(Z_0, Z_0)(P_{ij})\big]_{km\times km}\\
&=\bigg[\begin{bmatrix}\frac{1-w_i\Bar{w_j}}{1-z_i\Bar{z_j}}\end{bmatrix}_{k\times k} \odot P_{ij} \bigg]_{km\times km}\\
&=\bigg[\begin{bmatrix}\frac{1-w_i\Bar{w_j}}{1-z_i\Bar{z_j}}\end{bmatrix}_{k\times k}\otimes \mathbf{1}_m\bigg]\odot \mathbf{P},
\end{align*}
\endgroup
where $\mathbf{1}_m$ is the matrix of $m\times m$ order with all the entries $1$.
The above computation shows that the positivity of Pick matrix $\begin{bmatrix}\frac{1-w_i\Bar{w_j}}{1-z_i\Bar{z_j}}\end{bmatrix}_{k\times k}$ is equivalent to the completely positivity of $\Tilde{K}(Z_0, Z_0)(\cdot)$. Summarizing the above, therefore, we recover the following classical Nevanlinna-Pick theorem.
\begin{theorem}
Let $z_1,\ldots,z_n$ be distinct points in $\D$ and $w_1,\ldots,w_n$ be in $\D$. Then, there exists a bounded holomorphic function $\varphi$ on $\D$ such that,
\[
\|\varphi\|_{\infty}\leq 1\quad\text{and}\quad\varphi(z_i)=w_i\quad(i=1,\ldots,n)
\]
if and only if the matrix
\[
\begin{bmatrix}
	\frac{1-w_i\bar w_j}{1-z_i\bar z_j}
\end{bmatrix}_{i,j=1}^n
\]
is positive semi-definite.
\end{theorem}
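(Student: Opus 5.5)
We deduce the classical theorem from Theorem \ref{Scaler nc pick theorem} with $d=1$, using the diagonal matrices $Z_0=\mathrm{diag}(z_1,\ldots,z_n)$ and $W_0=\mathrm{diag}(w_1,\ldots,w_n)$. Since $\|Z_0\|=\max_i|z_i|<1$, we have $Z_0\in\fB_1(n)$. The computation just preceding the statement shows two things. First, $\Tilde{K}(Z_0,Z_0)(P)$ is the Schur product of the Pick matrix $\Lambda=\big[\frac{1-w_i\bar w_j}{1-z_i\bar z_j}\big]$ with $P$. Second, at level $m$, its amplification acts as $(\Lambda\otimes\mathbf{1}_m)\odot\mathbf{P}$, after the canonical reshuffling that identifies $M_m(M_n)$ with $M_n(M_m)$. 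So the proof reduces to two tasks: show that complete positivity of this Schur multiplier is equivalent to $\Lambda\succeq0$, and translate between nc functions on $\fB_1$ and functions in $H^\infty(\D)$.

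For the equivalence, assume first that $\Lambda\succeq 0$. Then $\Lambda\otimes\mathbf{1}_m$ is positive, since it is unitarily equivalent to $\Lambda\otimes \mathbf{1}_m$ with $\mathbf{1}_m\succeq0$. By the Schur product theorem, the Hadamard product of two positive matrices is positive, so each amplification is a positive map and $\Tilde K(Z_0,Z_0)$ is completely positive. Conversely, if the map is completely positive, it is in particular positive. Applying it to $P=\mathbf{1}_n$, the all-ones matrix, gives $\Lambda\odot\mathbf{1}_n=\Lambda\succeq0$.

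For the function-theoretic translation in the sufficiency direction, Theorem \ref{Scaler nc pick theorem} produces $\Phi\in H^\infty_1(\fB_1)$ with $\Phi(Z_0)=W_0$. Take $\varphi=\Phi|_{\fB_1(1)}$, a function on $\D$ bounded by $1$. It is holomorphic: nc functions that are locally bounded are analytic on each level, or alternatively $\Phi$ corresponds to a contractive multiplier of $\clh^2_1=H^2(\D)$, whose restriction to level one is in $H^\infty(\D)$ with $\sup|\varphi|\le1$. Because $\Phi$ respects direct sums, $\Phi(Z_0)=\mathrm{diag}(\varphi(z_1),\ldots,\varphi(z_n))$, so $\varphi(z_i)=w_i$.

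For necessity, given a Schur function $\varphi$, write $\varphi=\sum a_kz^k$ and define $\Phi(X)=\sum a_kX^k$ on $\fB_1$. This is the left multiplier $M_\varphi$ on $H^2(\D)=\clh^2_1$, and its norm equals $\|\varphi\|_\infty\le1$. By von Neumann's inequality it lies in $H^\infty_1(\fB_1)$. It satisfies $\Phi(Z_0)=W_0$ because $Z_0$ is diagonal. The necessity half of Theorem \ref{Scaler nc pick theorem} then gives complete positivity, and hence $\Lambda\succeq0$ by the previous step.

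The main obstacle is the identification of $H^\infty_1(\fB_1)$ with Schur functions on $\D$, which means justifying the restriction and extension via von Neumann's inequality and $\mathrm{Mult}_L(\clh^2_1)\cong H^\infty(\fB_1)$. The distinctness of the $z_i$ is needed only so that the interpolation problem is well posed.
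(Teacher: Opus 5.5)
Your proposal is correct and follows the paper's route. Like the paper, you specialize Theorem \ref{Scaler nc pick theorem} to $d=1$ with $Z_0=\mathrm{diag}(z_i)$ and $W_0=\mathrm{diag}(w_i)$, and identify the level-$m$ amplification with the Schur multiplier $(\Lambda\otimes\mathbf{1}_m)\odot\mathbf{P}$. You also make explicit several steps the paper leaves implicit: the Schur product theorem and the all-ones test matrix for the equivalence, and the passage between $H^\infty_1(\fB_1)$ and Schur functions on $\D$ via level-one restriction and von Neumann's inequality. One phrase needs fixing because it compares a matrix to itself: you mean that $\mathbf{1}_m\otimes\Lambda$ is permutation-equivalent to $\Lambda\otimes\mathbf{1}_m$, which is a tensor product of positive matrices.
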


\section{Shift invariant subspaces of the noncommutative Druary-Arveson space}\label{invariant subspace}
In this section, our goal is to characterize the shift-invariant subspaces of the nc Druary-Arveson space. For this purpose, we adapt the technique from \cite{Sarkar-I, Sarkar-II}. 

Recall that an operator tuple $T=(T_1,\ldots,T_d)$ is said to be a row contraction on a Hilbert space $\clh$ if 
\[
\sum_{i=1}^d T_iT_i^*\leq I.
\]
A row contraction $T=(T_1,\ldots,T_d)$ is said to be pure if 
\[
\text{SOT}-\lim_{k\to\infty}\sum_{\alpha\in\bW_d,|\alpha|=k}T^{\alpha}(T^{\alpha})^*=0.
\]
The following is an explicit dilation theorem that will be used in the sequel.
\begin{theorem}\label{dilation}
Let $T=(T_1,\ldots,T_d)$ be a pure row contraction on a Hilbert space $\clh$. We denote
\[
D_T:=\Big( I-\sum_{i=1}^d T_iT_i^*\Big)^{1/2}\quad \text{and}\,\,\cld=\overline{\text{ran}}\,D_T.
\]
We define $\Pi:H_{\cld}(\clk)\to\clh$  by
\begin{equation}\label{Pi}
\Pi(\clk_{W,v,y}\otimes\eta)=\sum_{\alpha\in \bW_d}\la y,W^\alpha v\ra T^\alpha D_T\eta.
\end{equation}
Then 
\begin{itemize}
\item[(i)] $\Pi$ is a co-isometry.
\item[(ii)] $(\Pi^*h)(Z)=\sum_{\alpha\in\bW_d}Z^{\alpha}\otimes D_T(T^{\alpha})^*h$.
\item[(iii)] $\Pi(M_{z_i}\otimes I_{\cld})=T_i\Pi$, for all $i=1,\ldots,d$.
\end{itemize}
\end{theorem}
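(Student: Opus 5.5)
The plan is to work with the canonical orthonormal basis of $H_{\cld}(\clk)\cong \clh^2_d\otimes\cld$. I will define the operator $L:\clh\to H_{\cld}(\clk)$ by the formula in (ii), namely
\[
(Lh)(Z)=\sum_{\alpha\in\bW_d}Z^\alpha\otimes D_T(T^\alpha)^*h .
\]
I will then show that $L$ is an isometry and that $L^*=\Pi$ on kernel functions. Together these give (i) and (ii).

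\textbf{Step 1: $L$ is an isometry.} Since the monomials $\{Z^\alpha\}$ are orthonormal in $\clh^2_d$, we have
\[
\|Lh\|^2=\sum_{\alpha}\|D_T(T^\alpha)^*h\|^2=\sum_{k\ge0}\sum_{|\alpha|=k}\big\langle T^\alpha\big(I-\textstyle\sum_iT_iT_i^*\big)(T^\alpha)^*h,h\big\rangle .
\]
The inner sum telescopes. Write $Q_k=\sum_{|\alpha|=k}T^\alpha(T^\alpha)^*$. Then $\sum_{|\alpha|=k}T^\alpha D_T^2(T^\alpha)^*=Q_k-Q_{k+1}$, so the partial sums up to $N$ equal $\langle (I-Q_{N+1})h,h\rangle$. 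Purity says $Q_{N+1}\to0$ in SOT, so $\|Lh\|=\|h\|$. In particular the series defining $Lh$ converges in $H_{\cld}(\clk)$.

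\textbf{Step 2: identify $L^*$ with $\Pi$.} By the reproducing property (\ref{reproducing property}), applied with $W\in\fB_d(n)$, $v,y\in\C^n$ and $\eta\in\cld$, we get
\[
\langle Lh,\clk_{W,v,y}\otimes\eta\rangle=\langle (Lh)(W)v,y\otimes\eta\rangle=\sum_\alpha\langle W^\alpha v,y\rangle\langle D_T(T^\alpha)^*h,\eta\rangle .
\]
This equals $\big\langle h,\sum_\alpha\langle y,W^\alpha v\rangle T^\alpha D_T\eta\big\rangle$. The series converges because $\sum_\alpha|\langle y,W^\alpha v\rangle|^2=\langle \clk(W,W)(vv^*)y,y\rangle<\infty$ and, by Step 1, $\sum_\alpha\|D_T(T^\alpha)^*h\|^2<\infty$, so Cauchy--Schwarz applies. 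Hence $L^*(\clk_{W,v,y}\otimes\eta)$ is exactly the right side of (\ref{Pi}).

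It follows that $\Pi$, defined on the spanning set of kernel functions, is the restriction of the bounded operator $L^*$. This settles well-definedness: any linear relation among kernel functions is respected automatically. So $\Pi=L^*$ is a co-isometry, since $\Pi\Pi^*=L^*L=I$. This proves (i), and $\Pi^*=L$ is (ii).

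\textbf{Step 3: intertwining (iii).} It is equivalent to check $(M_{z_i}\otimes I_\cld)^*\Pi^*=\Pi^*T_i^*$. I will verify this on coefficients. Left multiplication by $z_i$ sends $Z^\alpha\otimes\xi$ to $Z^{i\cdot\alpha}\otimes\xi$, so its adjoint sends $Z^{i\cdot\alpha}\otimes\xi\mapsto Z^\alpha\otimes\xi$ and kills the monomials not beginning with $i$. Applying this to $\Pi^*h$ gives
\[
\sum_\alpha Z^\alpha\otimes D_T(T^{i\cdot\alpha})^*h=\sum_\alpha Z^\alpha\otimes D_T(T^\alpha)^*T_i^*h=\Pi^*T_i^*h,
\]
using $T^{i\cdot\alpha}=T_iT^\alpha$. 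Taking adjoints gives (iii).

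\textbf{Main obstacle.} The main point needing care is Step 2: justifying well-definedness of $\Pi$ on the non-independent spanning family of kernel functions, and the interchange of the sum with the inner product. Both are handled by realizing $\Pi$ as the adjoint of the explicitly defined isometry $L$. The orthonormality of $\{Z^\alpha\otimes e_j\}$ in $H_{\cld}(\clk)$ will be used freely, as the paper does in Proposition \ref{right_shift_row}.
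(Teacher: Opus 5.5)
Your proposal is correct and follows the paper's proof essentially step by step: define $\Pi^*$ by the formula in (ii), show it is an isometry via the telescoping identity and purity, and identify its adjoint with $\Pi$ on kernel functions. The only differences are minor. You verify (iii) in the adjoint form $(M_{z_i}\otimes I_{\cld})^*\Pi^*=\Pi^*T_i^*$ instead of directly on monomials $Z^\beta\otimes\eta$, and you explicitly justify the well-definedness of $\Pi$ and the convergence of the defining series, which the paper leaves implicit.
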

\begin{proof}
We define $\Pi^*:\clh\to H_{\cld}(\clk)$ by
\[
(\Pi^*h)(Z)=\sum_{\alpha\in\bW_d}Z^{\alpha}\otimes D_T(T^{\alpha})^*.
\]
We calculate,
\begin{align*}
\|\Pi^*h\|^2
&=\Big\langle \sum_{\alpha\in \bW_d} Z^{\alpha}\otimes D_TT^{\alpha*}h, \sum_{\alpha\in \bW_d} Z^{\alpha}\otimes D_TT^{\alpha*}\Big\rangle\\
&= \sum_{\alpha\in \bW_d}\la T^{\alpha}D_T^2T^{\alpha*}h,h\ra\\
&= \Big\langle \sum_{\alpha\in \bW_d} T^{\alpha}D_T^2T^{\alpha*}h,h \Big\rangle.
\end{align*}
Now, since
\[
\sum_{\alpha\in \bW_d, |\alpha|=k} T^{\alpha}D_T^2T^{\alpha*}=\sum_{\alpha\in \bW_d, |\alpha|=k} T^{\alpha}T^{\alpha*}-\sum_{\alpha\in \bW_d, |\alpha|=k+1} T^{\alpha}T^{\alpha*},
\]
viewing $\sum_{\alpha\in \bW_d} T^{\alpha}D_T^2T^{\alpha*}$ as a telescopic sum, using the above, we can see
\begin{equation}\label{identity}
\sum_{\alpha\in \bW_d} T^{\alpha}D_T^2T^{\alpha*}= I-\lim_{k\to \infty}\sum_{\alpha\in \bW_d, |\alpha|=k} T^{\alpha}T^{\alpha*}=I,
\end{equation}
where, for the last equality we have used the fact that $T$ is pure. Therefore,
$\|\Pi^*h\|^2=\la h,h\ra=\|h\|^2$ and hence $\Pi^*$ is an isometry.

Now, we calculate the adjoint of $\Pi^*$. To this end,
\begin{align*}
\la \Pi^*h, \clk_{W,v,y}\otimes\eta\ra&=\Big\langle \sum_{\alpha\in \bW_d} Z^{\alpha}\otimes D_TT^{\alpha*}h, \Big (\sum_{\alpha\in \bW_d} \la y, W^\alpha v\ra Z^\alpha \Big )\otimes \eta \Big\rangle \\
&=\sum_{\alpha\in \bW_d}\la y, W^\alpha v \ra \la D_TT^{\alpha*}h, \eta\ra\\
&=\sum_{\alpha\in \bW_d} \big\langle h, \la y, W^\alpha v \ra T^{\alpha} D_T\eta\\
&= \Big\langle h, \sum_{\alpha\in \bW_d} \la y, W^\alpha v \ra T^{\alpha} D_T\eta
\Big\rangle. 
\end{align*}
Therefore, the adjoint of $\Pi^*$ is $\Pi$, which is given by the relation (\ref{Pi}). Now, for $Z\in\fB_d,\beta\in\bW_d,\eta\in\cld$ and $h\in\clh$,
\[
\la \Pi (Z^{\beta}\otimes \eta), h \ra=\Big\langle Z^{\beta}\otimes \eta, \sum_{\alpha\in\bW_d}Z^{\alpha}\otimes D_T(T^{\alpha})^*h  \Big\rangle= \la T^{\beta} D_T\eta, h\ra,
\]
and hence
\[
\Pi (Z^{\beta}\otimes \eta)=T^{\beta} D_T\eta.
\]
Therefore,
\begin{equation}\label{dilation_intertwiner}
\Pi (M_{z_i}\otimes I_{\cld})(Z^{\beta}\otimes \eta)=\Pi (Z^{i.\beta}\otimes \eta)= T^{i.\beta} D_T\eta=T_i T^{\beta} D_T\eta=T_i\Pi(Z^{\beta}\otimes \eta),
\end{equation}
which implies $\Pi (M_{z_i}\otimes I_{\cld})= T_i\Pi$, for all $i=1,\ldots,d$. This completes the proof.
\end{proof}

In the following result, we give a description of nontrivial  closed joint $(T_1,\ldots,T_d)$-invariant subspaces of a Hilbert space $\clh$, where $(T_1,\ldots,T_d)$ is a pure row contraction.
\begin{theorem}\label{description of S}
Let $T=(T_1,\ldots,T_d)$ be a pure row contraction on a Hilbert space $\clh$. A nontrivial closed subspace $\cls$ is a joint $T$ invariant subspace of $\clh$ if and only if there exists a Hilbert space $\cld$ and partial isometric operator $\Pi:H_{\cld}(\clk)\to\clh$ such that
\[
\Pi(M_{z_i}\otimes I_{\cld})=T_i\Pi,
\]
for all $i=1,\ldots,d$, and 
\[
\cls=\Pi(H_{\cld}(\clk)). 
\]
\end{theorem}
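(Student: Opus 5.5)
The converse direction is immediate, and the forward direction comes from applying Theorem \ref{dilation} to the restriction of $T$ to $\cls$.

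\emph{Converse.} Suppose $\Pi:H_{\cld}(\clk)\to\clh$ is a partial isometry with $\Pi(M_{z_i}\otimes I_{\cld})=T_i\Pi$ and $\cls=\Pi(H_{\cld}(\clk))$. Since $\Pi$ is a partial isometry, its range is closed, so $\cls$ is a closed subspace. Invariance is a one-line computation: for $f\in H_{\cld}(\clk)$ we have $T_i\Pi f=\Pi\big((M_{z_i}\otimes I_{\cld})f\big)\in\cls$. Hence $\cls$ is jointly $T$-invariant.

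\emph{Forward direction.} Let $\cls$ be a nontrivial closed joint $T$-invariant subspace, and set $S_i:=T_i|_{\cls}$. Invariance makes $S=(S_1,\ldots,S_d)$ a tuple of operators on $\cls$. I would check two things about $S$.

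First, $S$ is a row contraction. Writing $P_\cls$ for the orthogonal projection onto $\cls$, invariance gives $S_i^*=P_\cls T_i^*|_\cls$. Therefore, for $h\in\cls$,
\[
\sum_{i=1}^d\|S_i^*h\|^2=\sum_{i=1}^d\|P_\cls T_i^*h\|^2\le\sum_{i=1}^d\|T_i^*h\|^2\le\|h\|^2 .
\]

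Second, $S$ is pure. Invariance gives $(S^\alpha)^*=P_\cls(T^\alpha)^*|_\cls$. So for $h\in\cls$,
\[
\sum_{|\alpha|=k}\|(S^\alpha)^*h\|^2\le\sum_{|\alpha|=k}\|(T^\alpha)^*h\|^2=\Big\langle \sum_{|\alpha|=k}T^\alpha(T^\alpha)^*h,h\Big\rangle .
\]
The right-hand side tends to $0$ by purity of $T$, and the left-hand side is $\big\langle \sum_{|\alpha|=k}S^\alpha(S^\alpha)^*h,h\big\rangle$. Since $\sum_{|\alpha|=k}S^\alpha(S^\alpha)^*$ is a positive operator, convergence of these quadratic forms to $0$ gives SOT convergence to $0$, because $\|A^{1/2}h\|^2=\langle Ah,h\rangle$ and $\|A\|\le1$.

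Now apply Theorem \ref{dilation} to $S$ on $\cls$, with $\cld:=\overline{\text{ran}}\,D_S$. This gives a co-isometry $\Pi_S:H_{\cld}(\clk)\to\cls$ satisfying $\Pi_S(M_{z_i}\otimes I_{\cld})=S_i\Pi_S$. Let $\iota:\cls\to\clh$ be the inclusion and define $\Pi:=\iota\Pi_S$. Then:
\begin{itemize}
\item $\Pi^*\Pi=\Pi_S^*\Pi_S$ is a projection, because $\iota$ is an isometry and $\Pi_S$ is a co-isometry. Hence $\Pi$ is a partial isometry.
\item Its range is $\iota(\cls)=\cls$, since $\Pi_S$ is onto.
\item It intertwines: $\Pi(M_{z_i}\otimes I_{\cld})=\iota S_i\Pi_S=T_i\iota\Pi_S=T_i\Pi$.
\end{itemize}

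The only step needing care is the purity of $S$, and the inequality above handles it. One should also note that $\cld$ may be $\{0\}$ only if $\cls=\{0\}$: by (\ref{identity}), $I_\cls=\sum_{\alpha}S^\alpha D_S^2(S^\alpha)^*$, so a nontrivial $\cls$ forces $D_S\neq0$.
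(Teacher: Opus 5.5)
Your proof is correct and follows the same route as the paper: restrict $T$ to $\cls$, check that the restriction is a pure row contraction, apply Theorem \ref{dilation} to obtain a co-isometry $\Pi_S$ onto $\cls$, and set $\Pi=\iota\Pi_S$. Your version is in fact slightly more careful in three places: you correctly take $\cld=\overline{\text{ran}}\,D_S$, whereas the paper writes the defect space of $T$ rather than of $T|_{\cls}$; your row-contraction check uses $S_i^*=P_{\cls}T_i^*|_{\cls}$, avoiding the paper's displayed identity $\sum_i T_i|_{\cls}(T_i|_{\cls})^*=\sum_i T_iT_i^*$ on $\cls$, which fails without inserting $P_{\cls}$; and you also supply the easy converse, which the paper omits.
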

\begin{proof}
Let $T|_{\cls}=(T_1|_S,\ldots,T_d|_S)$. For all $s\in\cls$,
\[
\sum_{i=1}^dT_i|_S(T_i|_S)^*s=\sum_{i=1}^dT_iT_i^*s,
\]
which gives us that 
\[
\sum_{i=1}^dT_i|_S(T_i|_S)^*\leq I,
\]
that is, $T|_S$ is a row contraction. Also,
\begin{align*}
\text{SOT-}\lim_{k\to\infty}\sum_{\alpha\in\bW_d,|\alpha|=k}(T|_S)^{\alpha}P_S(T|_S)^{\alpha*}&=\text{SOT-}\lim_{k\to\infty}\sum_{\alpha\in\bW_d,|\alpha|=k}T^{\alpha}P_ST^{\alpha*}|_S\\&\leq \text{SOT-}\lim_{k\to\infty}\sum_{\alpha\in\bW_d,|\alpha|=k}T^{\alpha}(T^{\alpha})^*=0.
\end{align*}
Therefore, the row contraction $T|_S$ is pure. Define
\[
\cld=\overline{\text{ran}}\Big(I-\sum_{i=1}^d T_iT_i^*\Big)^{1/2}.
\]
Then, by Theorem \ref{dilation}, there exists a co-isometric map $\Pi_S:H_{\cld}(\clk)\to\cls$ such that
\[
\Pi_S(M_{z_i}\otimes I_{\cld})=T_i|_S\Pi_S,
\]
for all $i=1,\ldots,d$. Let $\iota_S:S\to\clh$ be the inclusion map. Define $\Pi:=\iota_S\Pi_S$. Then,
\[
\Pi(M_{z_i}\otimes I_{\cld})=\iota_S\Pi_S(M_{z_i}\otimes I_{\cld})=\iota_ST_i|_S\Pi_S=T_i\Pi_S=T_i\iota_S\Pi_S=T_i\Pi,
\]
for all $i=1,\ldots,d$. Also,
\[
\Pi\Pi^*=\iota_S\Pi_S\Pi_S^*\iota_S^*=P_S,
\]
where $P_S:\clh\to\cls$ is the orthogonal projection. Thus, $\Pi$ is a partial isometry and $S=\text{ran}\,\Pi=\Pi(H_{\cld}(\clk))$.
\end{proof}
We will use the above theorem to characterize the nontrivial closed joint  $M_{\z}=(M_{z_1},\ldots,M_{z_d})$-invariant subspaces of $\clh^2_d$. To this end, we need $M_{\z}$ to be a pure row contraction, which is verified in the following result.
\begin{proposition}\label{M_z_is_pure_row}
The tuple of left shifts $M_z=(M_{z_1},\ldots,M_{z_d})$ on $\clh^2_d$ is a pure row contraction. 
\end{proposition}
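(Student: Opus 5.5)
We work in the monomial picture $f(Z)=\sum_{\alpha\in\bW_d}Z^\alpha f_\alpha$, in which $\clh^2_d$ is identified with $\ell^2(\bW_d)$ via the orthonormal basis $\{Z^\alpha\}_{\alpha\in\bW_d}$. This identification is already implicit in the norm computations of Proposition \ref{right_shift_row} and Theorem \ref{dilation}. In this picture the left shift acts by $M_{z_i}(Z^\beta)=Z^{i\cdot\beta}$. So each $M_{z_i}$ is an isometry, and the ranges $M_{z_i}\clh^2_d=\overline{\text{span}}\{Z^{i\cdot\beta}:\beta\in\bW_d\}$ are mutually orthogonal for distinct $i$.

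The first step is to show that $M_z$ is a row contraction. I would prove, exactly as in Proposition \ref{right_shift_row}, that
\[
I-\sum_{i=1}^d M_{z_i}M_{z_i}^*=P_{\text{const.}}.
\]
The cleanest route is through the basis. One has $M_{z_i}^*Z^{\gamma}=Z^{\beta}$ if $\gamma=i\cdot\beta$, and $M_{z_i}^*Z^\gamma=0$ if the first letter of $\gamma$ is not $i$. Hence $\sum_i M_{z_i}M_{z_i}^*$ is the orthogonal projection onto $\overline{\text{span}}\{Z^\gamma:|\gamma|\ge 1\}$. Alternatively, one can check the identity on kernel functions using the adjoint action $M_{z_i}^*\clk_{W,v,y}=\clk_{W,v,W_i^*y}$, which is the analogue of Proposition \ref{action_adj_mul}, together with the identity $\clk(W,Z)(P)-\sum_i W_i\clk(W,Z)(P)Z_i^*=P$. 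Either way, $\sum_i M_{z_i}M_{z_i}^*\le I$.

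The second step is purity. Iterating the previous computation gives, for each $k$,
\[
\sum_{|\alpha|=k}M_z^\alpha (M_z^{\alpha})^*=P_k,
\]
where $P_k$ is the orthogonal projection onto $\overline{\text{span}}\{Z^\gamma:|\gamma|\ge k\}$. This holds because $M_z^\alpha$ is an isometry onto $\overline{\text{span}}\{Z^{\alpha\cdot\beta}\}$, and these ranges are pairwise orthogonal over words $\alpha$ of length $k$. Then for $f=\sum_\gamma Z^\gamma f_\gamma$,
\[
\|P_kf\|^2=\sum_{|\gamma|\ge k}|f_\gamma|^2,
\]
which is the tail of a convergent series and therefore tends to $0$. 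This gives the required SOT convergence to $0$.

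The main obstacle is justifying the basis description rigorously. One needs the monomials $Z^\alpha$ to be orthonormal in $\clh^2_d$ and to span a dense subspace. This follows from \eqref{Drury explicit}: writing $\clk_{W,v,y}=\sum_\alpha\langle y,W^\alpha v\rangle Z^\alpha$ and using the reproducing property identifies the coefficient functionals $f\mapsto f_\alpha$. Once that identification is granted, as the paper already does in earlier computations, everything else is routine bookkeeping.
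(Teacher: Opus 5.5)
Your proof is correct, but it takes a different route from the paper's. The paper works entirely with kernel functions. For the row-contraction part it checks $I-\sum_i M_{z_i}M_{z_i}^*=P_{\text{const.}}$ on $\clk_{Z,u,x}$, using $M_{z_i}^*\clk_{W,v,y}=\clk_{W,v,W_i^*y}$ and $\clk(W,Z)(P)-\sum_i W_i\clk(W,Z)(P)Z_i^*=P$; this is your ``alternative.'' For purity it shows only that each $M_{z_i}$ is individually pure, via $M_{z_i}^{*k}\clk_{W,v,y}=\clk_{W,v,W_i^{*k}y}\to 0$ since $\|W_i\|<1$, and then asserts joint purity ``due to the above.''

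That inference fails in general. For a unitary $U$, take $T_1=T_2=U/\sqrt{2}$: then $T_i^{*k}\to 0$ for each $i$, yet $\sum_{|\alpha|=k}T^\alpha (T^{\alpha})^*=I$ for every $k$. Your exact identity $\sum_{|\alpha|=k}M_z^\alpha(M_z^\alpha)^*=P_k$ controls the joint sum directly, so your argument supplies the step the paper skips. If one wants to stay in the kernel picture, the fix is $\big\langle \sum_{|\alpha|=k}M_z^\alpha(M_z^\alpha)^*\clk_{W,v,y},\clk_{W,v,y}\big\rangle=\big\langle \sum_{|\beta|\ge k}W^\beta vv^*(W^\beta)^*y,y\big\rangle\to 0$. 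Combine this with the uniform bound $\|\sum_{|\alpha|=k}M_z^\alpha(M_z^\alpha)^*\|\le 1$ and the density of kernel functions, which by Lemma \ref{sum of nc kernel at points} form a linear subspace.

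The price of your route is the Fock-space identification of $\clh^2_d$, which you rightly flag. The paper uses it tacitly in Proposition \ref{right_shift_row} and Theorem \ref{dilation}, so it is not a gap relative to the paper. Your justification is only a sketch, though. The clean argument is that square-summable series $\sum_\alpha Z^\alpha f_\alpha$ with the $\ell^2$ norm form an nc reproducing kernel Hilbert space on $\fB_d$ whose kernel functions are exactly $\clk_{W,v,y}$. Convergence holds because $\|\sum_{|\alpha|=k}W^\alpha (W^\alpha)^*\|\le \|W\|^{2k}$. Such a space therefore coincides isometrically with $\clh^2_d$.
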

\begin{proof}
First part of the proof is similar to Proposition \ref{right_shift_row}. We claim that 
\[
\Big(I-\sum_{i=1}^dM_{z_i}M_{z_i}^*\Big)f=P_{\text{const.}}f\quad\big(f\in\clh^2_d\big),
\]
where $P_{\text{const.}}$ is the orthogonal projection of $\clh^2_d$ onto the set of all constant functions in $\clh^2_d$. To justify the claim, we calculate
\begin{align*}
\Big\langle \Big(I_n-&\sum_{i=1}^dM_{z_i}M_{z_i}^*\Big)\clk_{Z,u,x},\clk_{W,v,y}\Big\rangle\\
&=\la\clk(W,Z)(vu^*)x,y\ra-\sum_{i=1}^d\la\clk(W,Z)(vu^*)Z_i^*x,W_i^*y\ra\\
&=\Big\langle\Big[\clk(W,Z)(vu^*)-\sum_{i=1}^d W_i\clk(W,Z)(vu^*)Z_i^*\Big]x,y\Big\rangle\\
&=\la[vu^*]x,y\ra\\
&=\la\clk_{0_n,u,x},\clk_{W,v,y}\ra.
\end{align*}
So,
\[
\Big(I_n-\sum_{i=1}^d M_{z_i}M_{z_i}^*\Big)\clk_{Z,u,x}(W)=\clk_{0_n,u,x}(W)=\sum_{\alpha\in\bW_d}\langle x,0_n^{\alpha} u\rangle W^{\alpha}=\clk_{Z,u,x}(0_n).
\]
Therefore, through a limiting argument, we prove the claim. 

From the claim, it follows that
\[
\Big(I-\sum_{i=1}^dM_{z_i}M_{z_i}^*\Big)=P_{\text{const.}}\geq 0.
\]
Hence, $(M_{z_1},\ldots,M_{z_d})$ is a row contraction. 

To prove that the row contraction $M_z$ is pure, we first show that for all $i=1,\ldots,d$, $M_{z_i}$ are pure shift. Indeed,
\[
M_{z_i}^{*k}\clk_{W,v,y}(Z)=\clk_{W,v,W_i^*y}(Z)=\sum_{\alpha\in\bW_d}\la W_i^ky,W^{\alpha}\ra Z^{\alpha}\to 0\quad\text{as}\,\,k\to\infty
\]
because $W\in\fB_d(n)$, so $\|W_i\|<1$ and hence $W_i$ is a pure contraction. Therefore, by a limiting argument, $M_{z_i}^{*k}\to 0$ as $n\to\infty$. 

Finally, due to the above,
\[
\text{SOT-}\lim_{k\to\infty}\sum_{\alpha\in\bW_d,|\alpha|=k}M_z^{\alpha}(M_z^{\alpha})^*=0.
\]
Hence, the row contraction $M_z=(M_{z_1},\ldots,M_{z_d})$ is pure. 
\end{proof}
 
The next theorem, which is the main result of this section, is a corollary of Theorem \ref{description of S} and Theorem \ref{intertwiner of left shifts}.
\begin{theorem}\label{inv-sub-left-shift}
Let $M_z=(M_{z_1},\ldots,M_{z_d})$ be a tuple of left shifts on the nc Drury-Arveson space $\clh^2_d$ and $\cls$ be a non-trivial closed subspace of $\clh^2_d$. Then, $S$ is joint $M_z$-invariant if and only if there exists a Hilbert space $\cle$ and a partial isometric right multiplier $\Psi\in\text{Mult}_R(H_{\cle}(\clk),\clh^2_d)$ such that 
\[
\cls=\Psi H_{\cle}(\clk).
\]
\end{theorem}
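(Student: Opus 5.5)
The plan is to combine Theorem \ref{description of S} with Theorem \ref{intertwiner of left shifts}, as the preceding remark suggests.

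\emph{Forward direction.} Suppose $\cls$ is a nontrivial closed joint $M_z$-invariant subspace. By Proposition \ref{M_z_is_pure_row}, $M_z=(M_{z_1},\ldots,M_{z_d})$ is a pure row contraction on $\clh^2_d$. Apply Theorem \ref{description of S} with $T=M_z$ and $\clh=\clh^2_d$. This gives a Hilbert space $\cle$ (there called $\cld$, the closed range of the defect operator of $M_z|_\cls$) and a partial isometry $\Pi:H_\cle(\clk)\to\clh^2_d$ such that
\[
\Pi(M_{z_i}\otimes I_\cle)=M_{z_i}\Pi \quad (i=1,\ldots,d), \qquad \cls=\Pi(H_\cle(\clk)).
\]
Since $\clh^2_d=H_{\C}(\clk)$, the operator $\Pi$ intertwines the left shifts on $H_\cle(\clk)$ with those on $H_\C(\clk)$. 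Theorem \ref{intertwiner of left shifts} (with $\clf=\C$) then yields $\Psi\in\text{Mult}_R(H_\cle(\clk),\clh^2_d)$ with $\Pi=M^R_\Psi$. Hence $M^R_\Psi$ is a partial isometry, which is the meaning of ``partial isometric right multiplier'', and $\cls=M^R_\Psi H_\cle(\clk)=\Psi H_\cle(\clk)$.

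\emph{Converse.} Suppose $\cls=\Psi H_\cle(\clk)$ with $M^R_\Psi$ a partial isometry. The range of a partial isometry is closed, so $\cls$ is a closed subspace. By Proposition \ref{Right_mult_and_shift}, $M_{z_i}M^R_\Psi=M^R_\Psi(M_{z_i}\otimes I_\cle)$. Therefore
\[
M_{z_i}\cls=M^R_\Psi(M_{z_i}\otimes I_\cle)H_\cle(\clk)\subseteq M^R_\Psi H_\cle(\clk)=\cls,
\]
so $\cls$ is joint $M_z$-invariant.

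\emph{Main obstacle.} The delicate step is the bookkeeping in the forward direction. Theorem \ref{intertwiner of left shifts} is stated for operators between $H_\cle(\clk)$ and $H_\clf(\clk)$, so one must check that $\clh^2_d$ is identified with $H_\C(\clk)$ and that the left shift there is $M_{z_i}\otimes I_\C=M_{z_i}$. This identification is standard, so I would address it in one sentence.

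I would also note that the definition of the right product and the sandwich identity $(M^R_\Psi)^*$ from \eqref{right_adjoint_action} are consistent with the realization $\Pi=M^R_\Psi$. Everything else is a direct citation of earlier results.
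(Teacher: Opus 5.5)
Your proposal is correct and follows the paper's proof. You use purity of $M_z$ (Proposition~\ref{M_z_is_pure_row}), then Theorem~\ref{description of S} to get the partial isometric intertwiner $\Pi$, then Theorem~\ref{intertwiner of left shifts} with $\clf=\C$ to write $\Pi=M^R_\Psi$. Your converse, via Proposition~\ref{Right_mult_and_shift} and closedness of the range of a partial isometry, is also correct and supplies the direction that the paper's proof leaves implicit.
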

\begin{proof}
By Theorem \ref{description of S}, there exists a Hilbert space $\cle$ and a partial isometric map $\Pi:H_{\cle}(\clk)\to \clh^2_d$ such that,
\[
\Pi(M_{z_i}\otimes I_{\cle})=M_{z_i}\Pi,
\]
for all $i=1,\ldots,d$, where $M_{z_i}\otimes I_{\cle}$ are the left shifts on $H_{\cle}(\clk)$, and 
\[
\cls=\Pi(H_{\cle}(\clk)). 
\]
Since, $\Pi$ is an intertwiner of the left shift operators, by Theorem \ref{intertwiner of left shifts}, $\Pi=M^R_{\Psi}$ for some $\Psi\in \text{Mult}_R(H_{\cle}(\clk),\clh^2_d)$. Hence, $\cls=\Psi H_{\cle}(\clk)$. 
\end{proof} 

\vspace{0.1in} \noindent\textbf{Acknowledgement:} The authors would like to express their sincere gratitude to Professor Jaydeb Sarkar for several fruitful and insightful discussions. The research of the first-named author is supported by DST-INSPIRE Faculty Fellowship No. DST/INSPIRE/Faculty/2023/\\IFA-23-MA-201. The second-named author's research is supported by NBHM Postdoctoral Fellowship No. 0204/27/(26)/2023/R\&D- II/11926.

\end{document}